\documentclass[11pt]{article}
\usepackage{amsmath, amsfonts, amsthm,amssymb, color, hyperref, enumerate, extarrows, comment, enumitem}

\newtheorem{theorem}{Theorem}
\newtheorem*{theorem*}{Theorem}

\newtheorem{lem}{Lemma}[section]
\newtheorem{cor}[theorem]{Corollary}
\newtheorem{pro}[lem]{Proposition}
\newtheorem{definition}[lem]{Definition}
\newtheorem{question}[lem]{Question}
\newtheorem*{notation*}{Notation}
{\theoremstyle{definition}
\newtheorem{example}[lem]{Example}}

\newtheorem*{pro*}{Proposition}

\numberwithin{equation}{section}
 {\theoremstyle{definition}
 \newtheorem{remark}[lem]{Remark}}
 \newtheorem*{remark*}{Remark}
\newtheorem{claim}{Claim}[lem]

\newtheorem*{claim*}{Claim}

\newcounter{nmdthmcnt}

\usepackage[T1]{fontenc}

\hypersetup{
	colorlinks   = true,
	citecolor    = magenta}

\newcommand{\bC}{\mathbb{C}}
\newcommand{\bB}{\mathbb{B}}

\begin{document}

\title{\vspace{-1.2cm} \bf  A new strong rigidity phenomenon for the Bergman metric
  \rm}

\author{Peter Ebenfelt, John N. Treuer, Ming Xiao}

\maketitle

\begin{abstract}

We establish a new local-to-global rigidity phenomenon for the Bergman
metric. Namely, under natural geometric hypotheses, a local conformal
identification of Bergman metrics determines the underlying complex
manifold globally, up to the unavoidable ambiguity of removing
Bergman-negligible subsets.
More precisely, let $\Omega\subseteq\mathbb C^n$ be a bounded domain with
a complete Bergman metric, and suppose that the Bergman metric of a complex
manifold $M$ is locally conformal, via a holomorphic map $f$, to that of
$\Omega$. We prove that the given local map $f$ extends to a biholomorphism
$F\colon M\to D$ onto a subdomain $D\subseteq\Omega$ in two complementary
settings. If $M$ is Stein, then $\Omega\setminus D$ is a closed pluripolar
set. If $M$ is a bounded domain and $\Omega$ satisfies a natural symmetry
condition expressed in terms of its automorphism orbits, then
$\Omega\setminus D$ is Bergman-negligible. In particular, this applies when
$\Omega$ is a bounded homogeneous domain and yields a characterization, up
to Bergman-negligible sets, of bounded domains with locally symmetric
Bergman metrics. The latter answers a question raised by Loi--Palmieri and Zimmer.
A key ingredient in the proof is a new Calabi-type extension theorem
tailored to Bergman metrics.

\end{abstract}

\renewcommand{\thefootnote}{\fnsymbol{footnote}}
\footnotetext{\hspace*{-7mm}
\begin{tabular}{@{}r@{}p{16.5cm}@{}}
& Keywords.  Bergman metric, Holomorphic Isometry, Stein manifold, biholomorphism\\
& Mathematics Subject Classification. Primary 32H02; Secondary 32A36, 32Q28, 32D15, 32F45
%
\end{tabular}

\noindent\thanks{The first author is supported in part by the NSF grants DMS-2154368 and DMS-2453134. The second author is supported in part by the NSF grant DMS-2247175. The third author is
supported in part by the NSF grants DMS-2045104 and DMS-2554635.}

\noindent \date
}

\section{Introduction}\label{sec:introduction}

Among the fundamental metrics in complex analysis and complex geometry is the Bergman metric. It is biholomorphically invariant and is determined by the $L^2$-holomorphic structure of the underlying manifold $M$. Remarkably, the converse phenomenon also holds to a significant extent:~even the local geometry of the Bergman metric contains a wealth of information about the global holomorphic geometry of $M$. There is, however, an unavoidable limitation. By its very definition, the Bergman metric is insensitive to the removal of certain sufficiently thin subsets. Following the terminology introduced by the present authors in \cite{ETX25}, we call such sets \emph{Bergman-negligible}:

\begin{definition}
    A subset $E$ of a domain $\Omega$ is called {\em Bergman-negligible} if $E$ is closed, measure zero and all functions in the Bergman space of $\Omega \setminus E$ (the Hilbert space of all $L^2$-holomorphic functions on $\Omega \setminus E$, endowed with the $L^2$-inner product) extend holomorphically to $\Omega$.
\end{definition}

Closed pluripolar subsets, such as complex subvarieties, are Bergman-negligible. There are also substantially larger examples; for instance, a sufficiently small closed patch of a real hypersurface may also be Bergman-negligible. We refer to \cite{ETX25} for such an example. By definition, if $\Omega_1\subseteq\Omega_2$ are domains and $\Omega_2\setminus\Omega_1$ is Bergman-negligible in $\Omega_2$, then the Bergman spaces of $\Omega_1$ and $\Omega_2$ are naturally identified, and hence their Bergman metrics agree wherever they are well defined. Thus, one cannot expect the local geometry of the Bergman metric to distinguish such domains.  

The main results of this paper show that, under additional geometric hypotheses, this is essentially the only ambiguity. More precisely, we establish a new rigidity phenomenon: \emph{the conformal class of the Bergman metric of a bounded domain $M$, even when known only on an open subset, determines the global holomorphic geometry of $M$ up to Bergman-negligible sets.}

We next describe the proposed rigidity principle more precisely. Let $M$ be a bounded domain in $\bC^n$ or, more generally, a  complex manifold of dimension $n$ with a well-defined Bergman metric. 
Let $\Omega\subseteq\bC^n$ be a model domain. Here by a \emph{model domain}, we mean a bounded domain whose Bergman metric is complete. This will be a standing assumption on $\Omega$ throughout the paper. Let $\omega_M$ and $\omega_\Omega$ denote the K\"ahler forms of the Bergman metrics of $M$ and $\Omega$, respectively.
Recall that the biholomorphic invariance of the Bergman metric means that if $F\colon M\to\Omega$ is a biholomorphism, then
$
\omega_M=F^*(\omega_\Omega).
$
Motivated by this, we say that the Bergman metrics of $M$ and $\Omega$ are \emph{locally identical} if there exist open sets $U\subseteq M$ and $V\subseteq\Omega$ and a biholomorphism $f\colon U\to V$ such that
$
\omega_M=f^*(\omega_\Omega)
$
on $U$. More generally, we say that their conformal classes are locally identical if there exist such $U$, $V$, and $f$, together with a positive function $\Lambda$ on $U$, such that
$
\omega_M=\Lambda f^*(\omega_\Omega).
$

The main question considered in this paper is whether the global holomorphic geometry of $M$ can be recovered from the germ of the conformal class of its Bergman metric. More precisely, we ask:

\begin{question}\label{MainQuestion}
Assume $n\geq 2$, and let $M$ and $\Omega$ be as above. Suppose that there
exist open connected sets $U\subseteq M$ and $V\subseteq\Omega$, a
biholomorphism $f\colon U\to V$, and a positive function $\Lambda$ on $U$
such that
\begin{equation}\label{Normalizing condition}
\omega_M=\Lambda f^*(\omega_\Omega).
\end{equation}
Does $f$ extend to a biholomorphism
$$
F\colon M\longrightarrow D
$$
onto a subdomain $D\subseteq\Omega$ such that $\Omega\setminus D$ is
Bergman-negligible in $\Omega$? The question remains valid for $n=1$ if one replaces
the positive function $\Lambda$ by a positive constant $\lambda$.
\end{question}

Note that once one has established an affirmative answer to
Question~\ref{MainQuestion}, it follows that necessarily $\Lambda\equiv 1$,
by the biholomorphic invariance of the Bergman metric and the definition of
Bergman-negligible subsets. Furthermore, when $n\geq 2$, one can show
\emph{a priori} that the conformal factor $\Lambda$ in
\eqref{Normalizing condition} must be a constant $\lambda>0$; see
Proposition~\ref{Conformal implies constant proposition in dimension greater than one}
below. This is no longer true when $n=1$, since in that case all Hermitian
metrics are conformal, which explains the constant-factor formulation in
Question~\ref{MainQuestion} for complex dimension one. Thus, in what
follows, we shall mainly consider \eqref{Normalizing condition} with the
understanding that $\Lambda\equiv\lambda>0$.

\begin{remark}
The completeness assumption on the Bergman metric of $\Omega$ is essential
in Question~\ref{MainQuestion}. For example, let
$M=\mathbb B^2\setminus\{0\}$ and
$\Omega=\mathbb B^2\setminus\{z_1=0\}$. Then the Bergman metrics of $M$
and $\Omega$ agree locally under the identity map; in particular, on any
open connected set $U\subseteq\Omega \subseteq M$:
$
\omega_M=\operatorname{id}^*(\omega_\Omega)
$
on $U$. However, the identity map on $U$ does not extend to a holomorphic
map from $M$ into $\Omega$. Moreover, $M$ is not biholomorphic to any
subdomain $D\subseteq\Omega$ such that $\Omega\setminus D$ is
Bergman-negligible in $\Omega$. Thus the conclusion of
Question~\ref{MainQuestion} can fail when the Bergman metric of $\Omega$
is not complete.
\end{remark}

To the best of our knowledge, Question~\ref{MainQuestion} has not previously
been formulated in this generality. Nevertheless, several important results
in the literature may be viewed, from the perspective of the present paper,
as rigidity phenomena that motivate this question. In his seminal work, Mok \cite{Mo12} established a number of general
rigidity and extension theorems for local holomorphic isometric maps with
respect to Bergman metrics, stimulating many subsequent studies of
holomorphic isometric maps, particularly between bounded symmetric domains.
In particular, it follows from Mok \cite[Theorem~2.1.1]{Mo12} that, in the
setting of Question~\ref{MainQuestion}, if both $M$ and $\Omega$ are bounded
domains, then $f$ extends to a holomorphic isometric immersion of
$(M,\omega_M)$ into $(\Omega,\lambda\omega_\Omega)$. If, in addition, the
Bergman metric of $M$ is also complete, then $f$ extends to a
biholomorphism from $M$ onto $\Omega$. Thus, in the complete setting, the
model domain $\Omega$ is unique up to biholomorphism.
Viewed in light of Mok's work, a principal remaining issue in
Question~\ref{MainQuestion} is therefore to determine whether the complement
$\Omega\setminus D$ of the image of the extended map is
Bergman-negligible.

Another important line of research that motivates our work concerns complex manifolds
whose Bergman metrics have constant holomorphic sectional curvature. In the framework of
Question~\ref{MainQuestion}, this corresponds to the special case in which
the model domain $\Omega$ is the complex unit ball. Indeed, the Bergman
metric of $M$ is locally identified, via a holomorphic map, with
$\lambda$ times the Bergman metric of $\bB^n$ for some constant
$\lambda>0$ if and only if $\omega_M$ has negative constant holomorphic
sectional curvature. The study of this rigidity phenomenon goes back to
the classical theorem of Qi-Keng Lu \cite{L65}, which states that a bounded
domain $M\subseteq\bC^n$ whose Bergman metric is complete and has constant
holomorphic sectional curvature is biholomorphic to the unit ball
$\bB^n\subseteq\bC^n$.  In the setting of incomplete Bergman metrics with constant holomorphic sectional curvature, a fundamental breakthrough was achieved by Huang and Li, who settled in \cite{HL24, HuLi26} a long-standing folklore conjecture by providing a decisive characterization of Stein manifolds whose Bergman metrics have constant holomorphic sectional curvature. In the antecedents by Dong–Wong \cite{DW22a, DW22b}, several extensions of Lu’s theorem to the setting of incomplete Bergman metrics were established using a different method. (See also the work \cite{HX20} of Huang and the third author on Bergman metrics of complex singular spaces).
Inspired by Huang and Li's work \cite{HL24}, in \cite{ETX25},
the present authors established a rigidity result for the unit-ball model which, in the terminology introduced here, may be viewed as an affirmative answer to the special case $\Omega=\bB^n$ of
Question~\ref{MainQuestion}. See also
\cite{EbTrXi25, BhGaNaXi26,  Pa25, LoPa26} for many other investigations
of Bergman metrics with constant holomorphic sectional curvature and related rigidity questions which have been motivated by Huang--Li's work \cite{HL24}.

The main results of this paper give affirmative answers to
Question~\ref{MainQuestion} in two important independent settings:~one in which $M$ is
assumed to be Stein, and the other in which $\Omega$ possesses a certain
degree of symmetry. These results are established in
Theorem~\ref{The main Stein result} and
Theorem~\ref{Main result - bounded homogoeneous case}, respectively. In
particular, Theorem~\ref{The main Stein result} is again inspired by Huang--Li \cite{HL24, HuLi26}, and  may be viewed as a natural
continuation of their work.

\begin{theorem}\label{The main Stein result}
Let $M$ be an $n$-dimensional Stein manifold, $n\geq 1$, with a well-defined
Bergman metric $\omega_M$. Let $\Omega\subseteq\mathbb{C}^n$ be a bounded
domain with a complete Bergman metric $\omega_\Omega$. Assume that there
exist an open connected set $U\subseteq M$ and a holomorphic map
$f\colon U\to\Omega$ such that
\begin{equation}\label{Normalizing condition-1}
\omega_M=\lambda f^*(\omega_\Omega)
\end{equation}
for some constant $\lambda>0$. Then $f$ extends to a biholomorphism $F\colon M\to D$ onto a subdomain
$D\subseteq\Omega$ such that $\Omega\setminus D$ is a closed pluripolar
subset of $\Omega$. Moreover, when $n\geq 2$, the constant $\lambda$ in
\eqref{Normalizing condition-1} may be replaced by an everywhere positive
function $\Lambda$ on $U$.
\end{theorem}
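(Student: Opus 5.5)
First, when $n\ge 2$ we reduce to a constant conformal factor by invoking Proposition~\ref{Conformal implies constant proposition in dimension greater than one}, which gives $\Lambda\equiv\lambda>0$. From now on assume \eqref{Normalizing condition-1} with $\lambda$ constant.

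The key step is to translate the metric identity into an identity of Bergman kernels, Calabi-style. Write $K_M(z,\bar w)$ for the Bergman kernel of $M$; it is an $(n,n)$-form. On $\Omega$, write $K_\Omega$ for the Bergman kernel function. In local coordinates, $\omega_M = i\partial\bar\partial\log K_M(z,\bar z)$. The relation $\omega_M=\lambda f^*\omega_\Omega$ on $U$ says that
\[
\log K_M(z,\bar z)-\lambda\log K_\Omega(f(z),\overline{f(z)})
\]
is pluriharmonic. After shrinking $U$ to a simply connected set and polarizing, we get
\[
K_M(z,\bar w)=\phi(z)\overline{\phi(w)}\,K_\Omega(f(z),\overline{f(w)})^{\lambda}
\]
near the diagonal, for a nonvanishing holomorphic $\phi$ (an $n$-form). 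We then compare with the diastasis, or equivalently with the Bergman embeddings into $\mathbb{P}(A^2(M)^*)$ and $\mathbb{P}(A^2(\Omega)^*)$. The plan is to use Calabi's rigidity and extension results, or the paper's Calabi-type extension theorem that the abstract announces, to extend $f$ along paths in $M$ as a holomorphic isometric immersion $(M,\omega_M)\to(\Omega,\lambda\omega_\Omega)$.

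Completeness of $\omega_\Omega$ is essential here. The image of a path stays in a compact subset of $\Omega$ because its $\lambda\omega_\Omega$-length equals its $\omega_M$-length, which is finite. So analytic continuation never runs into $\partial\Omega$. Since $M$ need not be simply connected, we argue that the continuation has no monodromy. Two continuations along homotopic-inequivalent loops give the same Bergman-kernel identity, and the Bergman map of $\Omega$ is injective (it separates points, since $\Omega$ is bounded). Hence the continuation is single-valued. This yields $F:M\to\Omega$ with the kernel identity holding globally by the identity theorem.

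Next we show $F$ is injective and a local biholomorphism. Local biholomorphicity follows from $\omega_M=\lambda F^*\omega_\Omega$ and the positivity of $\omega_M$. For injectivity, use the global kernel identity. If $F(p)=F(q)$, then $K_M(\cdot,\bar p)/\phi(p)$ and $K_M(\cdot,\bar q)/\phi(q)$ are proportional. Since $M$ is Stein, $A^2(M)$ separates points wherever the metric is defined; more precisely, the Bergman map is injective on Stein manifolds with a Bergman metric, after the Huang--Li style argument. This forces $p=q$.

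The main obstacle is showing that $\Omega\setminus D$ is closed pluripolar and that $\lambda=1$. Our plan is to use the Stein hypothesis to pull the $L^2$ theory back. Consider $G=F^{-1}:D\to M$. By Hörmander $L^2$-estimates on the Stein manifold $M$ (or by using the kernel identity with $\lambda$ forcing integrality, in the manner of Huang--Li), the functions $K_\Omega(\cdot,\bar w)^\lambda$ must behave like Bergman kernels. One shows the transported kernel $K_M\circ G$ has $\log$ that is a plurisubharmonic exhaustion-type function on $\Omega$ tending to $-\infty$ along $\partial D\cap\Omega$ relative to $\log K_\Omega$. Concretely, we would exhibit a psh function on $\Omega$, built from $\log|\phi\circ G|$ and the quotient kernel, that equals $-\infty$ exactly on $\Omega\setminus D$; this gives pluripolarity. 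Closedness is automatic since $D$ is open. Once $\Omega\setminus D$ is pluripolar, it is Bergman-negligible. Then $A^2(D)\cong A^2(\Omega)$, so $K_D=K_\Omega|_D$, and biholomorphic invariance gives $\omega_M=F^*\omega_\Omega$, forcing $\lambda=1$. Constructing this plurisubharmonic defining function, i.e.\ controlling $F(M)$ near its boundary inside $\Omega$ using only Steinness, is where I expect the real difficulty to lie. The first thing I would try is adapting the Huang--Li argument from $\mathbb B^n$ to a general complete model.
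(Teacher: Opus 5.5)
\textbf{The main gap is the final step, which you yourself flag.} Several of your steps are sound: the reduction to constant $\lambda$, the continuation of $f$ via completeness, and the deduction ``$\Omega\setminus D$ closed pluripolar $\Rightarrow$ Bergman-negligible $\Rightarrow \lambda=1$''. But the pluripolarity step is not proved, and your proposed fix cannot work as stated. A plurisubharmonic function ``built from $\log|\phi\circ G|$ and the quotient kernel'' lives only on $D$. The functions $\phi\circ G$ and $K_M\circ G$ carry no information across $\partial D\cap\Omega$, and nothing in your argument produces an object defined on all of $\Omega$.

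The missing idea is an extension \emph{beyond} $D$, obtained from the Calabi-type extension theorem. Write $K_D=|\varphi|^2K_\Omega^\lambda$ near a point of $D$, and take an orthonormal basis $\{\phi_k\}$ of $A^2(D)$ containing a constant; this is possible since $D$ is bounded. Then $\sum_k|\phi_k/\varphi|^2=K_\Omega^\lambda$ locally. Because $(K_\Omega^\lambda)^{1/\lambda}=K_\Omega$ has a global sesqui-holomorphic polarization on $\Omega\times\Omega$, $\eta=1/\varphi$ continues over all of $\Omega$ up to unimodular constants. Moreover $|\eta|^2$ is single-valued and real-analytic on $\Omega$, and $|\eta|^2K_D=K_\Omega^\lambda$ on $D$ (Proposition~\ref{f over varphi is semi-single-valued}).

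Hence $K_D$ stays bounded near every point of $(\partial D\cap\Omega)\setminus\{\eta=0\}$. Since $D\subseteq\Omega$ is bounded and, being biholomorphic to the Stein manifold $M$, pseudoconvex, the Pflug--Zwonek lemma \cite[Lemma~11]{PZ02} makes that set pluripolar. The rest of $\partial D\cap\Omega$ lies in the local analytic sets $\{\eta=0\}$. So $\partial D\cap\Omega$ is a closed pluripolar subset of $\Omega$, and it cannot separate $D$ from $\Omega\setminus\overline D$. Therefore $\Omega\setminus D=\partial D\cap\Omega$. This is where Steinness is genuinely used; Hörmander estimates on $M$ or an ``integrality of $\lambda$'' argument do not replace it.

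\textbf{Two earlier steps also need repair.}

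First, for $\lambda\neq1$ the Bergman map $\beta_\Omega$ is isometric for $\omega_\Omega$, not for $\lambda\omega_\Omega$. So Calabi rigidity cannot compare $\beta_M$ with $\beta_\Omega\circ f$, and $(\Omega,\lambda\omega_\Omega)$ need not a priori immerse isometrically in $\mathbb P^\infty$ (Example~\ref{eg1}). The paper constructs such an embedding $[H]$ from $\beta_M$ (Lemma~\ref{Lemma prior to the main Stein theorem}) and gets single-valuedness of $F$ from $L\circ\beta_M=[H]\circ F$.

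Your alternative is to continue the kernel identity in $z$ with $w$ fixed near the base point, and then compare. This can be made to work, but only if you do three things:
\begin{enumerate}
\item continue $\phi$ using the pluriharmonicity of $\log K_M-\lambda\log K_\Omega\circ F$;
\item push loops off the zero set of $K_M(\cdot,\bar p)$, so that the chosen branch of the $\lambda$-power continues;
\item observe that if $|K_\Omega(\cdot,\widehat F(z))|$ and $|K_\Omega(\cdot,f(z))|$ are proportional on an open set, then the kernels themselves are proportional, hence $\widehat F(z)=f(z)$ (for instance, test against $1,z_1,\dots,z_n\in A^2(\Omega)$).
\end{enumerate}
Note also that for non-integer $\lambda$ the polarized identity does not hold globally on $M\times M$. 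Only its modulus version does, and that is enough for your injectivity argument.

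Second, ``Stein with a Bergman metric implies $A^2(M)$ separates points'' is not a general fact you can cite. Huang--Li's result \cite[Proposition~3.1]{HuLi26} needs a bounded strictly plurisubharmonic function on $M$. You do have one, namely $|F|^2$, but you must invoke it.
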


\begin{remark}\label{rmk:question21dimension1}
Note that every planar domain, i.e.~every domain in $\mathbb{C}$, is Stein. Theorem~\ref{The main Stein result},
together with its proof, yields an affirmative answer to
Question~\ref{MainQuestion} in complex dimension one:
{\it Let $M$ be a planar domain, or more generally a Stein Riemann surface, or a
Bergman-nondegenerate Riemann surface, with a well-defined Bergman metric
$\omega_M$. Let $\Omega$ be a bounded planar domain with a complete
Bergman metric $\omega_\Omega$. Assume that there exist an open connected
set $U\subseteq M$ and a holomorphic map $f\colon U\to\Omega$ such that
\eqref{Normalizing condition-1} holds. Then $f$ extends to a biholomorphism $F\colon M\to D$ onto a subdomain
$D\subseteq\Omega$ such that $\Omega\setminus D$ is a closed polar subset
of $\Omega$.} We will provide a justification for this statement in \S\ref{Section Theorem 1 proved}.
\end{remark}

Having resolved Question~\ref{MainQuestion} in complex dimension one, we
focus in what follows on the case $n\geq 2$.
In our second main result, we impose no additional assumptions on $M$. Instead, we impose a symmetry condition on the model domain
$\Omega$. Throughout the paper, for a domain $\Omega$, let
$\operatorname{Aut}(\Omega)$ denote its group of holomorphic automorphisms.
The $\operatorname{Aut}(\Omega)$-orbit through a point $p\in\Omega$ is the
set of images of $p$ under all automorphisms of $\Omega$. We say that the $\operatorname{Aut}(\Omega)$-orbit through $p\in\Omega$
contains a nonconstant holomorphic curve if it contains the image of a
nonconstant holomorphic map $\gamma\colon\mathbb D\to\Omega$. Here $\mathbb{D}$ is the unit disk in $\mathbb{C}$.

We also recall that a subset $X\subseteq\Omega$ is said to be
\emph{analytically Zariski dense} in $\Omega$ if it is not contained in any
proper complex-analytic subset of $\Omega$. Equivalently, $X$ is
analytically Zariski dense if every holomorphic function on $\Omega$ that
vanishes on $X$ vanishes identically on $\Omega$. In other words, the
analytic Zariski closure of $X$ is all of $\Omega$. Typical examples include
any nonempty open subset of $\Omega$ and any nonempty open piece
of a smooth real hypersurface contained in $\Omega$.
We are now ready to state our second main result.

\begin{theorem}\label{Main result - bounded homogoeneous case}
Let $M\subseteq\mathbb{C}^n$, $n\geq 2$, be a bounded domain with a Bergman
metric $\omega_M$, and let $\Omega\subseteq\mathbb{C}^n$ be a bounded domain
with a complete Bergman metric $\omega_{\Omega}$. Suppose there exists an
analytically Zariski dense subset $X\subseteq\Omega$ such that, for every
$p\in X$, the $\operatorname{Aut}(\Omega)$-orbit through $p$ contains a nonconstant
holomorphic curve. Assume further that there exist an open connected set
$U\subseteq M$ and a holomorphic map $f:U\to\Omega$ such that
\begin{equation}\label{Normalizing condition-2}
\omega_M=\Lambda f^*(\omega_\Omega),
\end{equation}
where $\Lambda$ is a positive function on $U$. Then $f$ extends to a biholomorphism $F\colon M\to D$ onto a subdomain
$D\subseteq\Omega$ such that $\Omega\setminus D$ is a
Bergman-negligible subset of $\Omega$.
\end{theorem}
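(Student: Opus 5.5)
We first reduce to $\Lambda\equiv\lambda$ constant by Proposition~\ref{Conformal implies constant proposition in dimension greater than one}, since $n\ge 2$.

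\emph{Extension and injectivity.} Both $M$ and $\Omega$ are bounded, so Mok's extension theorem \cite[Theorem~2.1.1]{Mo12} applies. After shrinking $U$ so that $f$ is a local biholomorphism, it shows that $f$ extends to a holomorphic isometric immersion $F\colon (M,\omega_M)\to(\Omega,\lambda\omega_\Omega)$. The Bergman kernels are then related by a Calabi-type identity. Writing $K_M$ and $K_\Omega$ for the kernels, polarization gives
\[
\log K_M(z,\bar w)=\lambda\log K_\Omega(F(z),\overline{F(w)})+\phi(z)+\overline{\phi(w)}
\]
for some holomorphic $\phi$. I would then use the new Calabi-type extension theorem for Bergman metrics (announced in the abstract) to show that $F$ is injective. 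Near the diagonal, the identity above identifies the Bergman embedding of $M$ with a $\lambda$-power twist of that of $\Omega$. If $F(z_1)=F(z_2)$, we can compare kernel values and use that the Bergman embedding of $M$ separates points up to the holomorphic gauge $e^{\phi}$. Alternatively, $F^{-1}$ extends along paths in $D=F(M)$ by Calabi rigidity applied in the reverse direction, with completeness of $\omega_\Omega$ ensuring that paths do not escape. Either way we get a biholomorphism $F\colon M\to D$ onto an open subset $D\subseteq\Omega$.

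\emph{Comparing kernels.} Biholomorphic invariance gives $\omega_D=F_*\omega_M=\lambda\,\omega_\Omega|_D$ on $D$. So $D\subseteq\Omega$ is an open subdomain whose Bergman metric is $\lambda$ times the restriction of that of $\Omega$. The remaining task is to show that $\lambda=1$ and that $E=\Omega\setminus D$ is closed in $\Omega$, of measure zero, and Bergman-negligible. From the metric identity, the polarized identity becomes
\[
K_D(z,\bar w)=K_\Omega(z,\bar w)^{\lambda}\, h(z)\overline{h(w)}
\]
on $D\times D$, with $h$ holomorphic and nonvanishing on $D$.

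\emph{Using the symmetry hypothesis.} This is the main obstacle. For $p\in X$ take a nonconstant curve $\gamma\subseteq\operatorname{Aut}(\Omega)\cdot p$. The plan is to show that the set of points of $\Omega$ near which the above identity holds with a holomorphic $h$ is a complex-analytic obstruction. If $E$ met $\Omega$ in a non-negligible way, the locus where $K_\Omega^{\lambda}$ fails to extend as a kernel of a Hilbert space of functions on $D$ would, after transport by automorphisms $\sigma\in\operatorname{Aut}(\Omega)$, produce a family of domains $\sigma(D)$ all satisfying the same kernel identity. Using the orbit curves, I would move a point of $D$ near $\partial D\cap\Omega$ holomorphically along $\gamma$ and apply the identity theorem. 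This should show that $h$, and the positivity of $K_\Omega^{\lambda}$ restricted to $D$, extend across $E$ near every point of $X$. Zariski density of $X$ then propagates the extension to all of $\Omega$: the set where the extension fails is contained in an analytic subset that misses $X$, hence is empty.

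Once the identity holds on all of $\Omega$, restriction $L^2(\Omega)\cap\mathcal O\to A^2(D)$ becomes an isometric isomorphism up to the factor $h$. Comparing on the diagonal and using completeness of $\omega_\Omega$, with the kernel blowing up at $\partial\Omega$, forces $\lambda=1$ and $h$ constant of modulus $1$. Hence $K_D=K_\Omega|_{D\times D}$. Every $g\in A^2(D)$ then extends to $\Omega$, $E$ has measure zero because the norms agree, and $E$ is closed because $D$ is open. So $E$ is Bergman-negligible.

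I expect the propagation via orbit curves to be the hardest step. The first thing I would try there is a Hartogs/Kontinuitätssatz argument along the discs $\sigma_t(\gamma)$ to extend $h$.
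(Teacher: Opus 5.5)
\textbf{There is a genuine gap: the step that forces $\lambda=1$ and proves $\Omega\setminus D$ is Bergman-negligible is not an argument.}

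Your first half is essentially sound. As the paper itself remarks, Mok's theorem extends $f$ to a global holomorphic isometric immersion $F\colon(M,\omega_M)\to(\Omega,\lambda\omega_\Omega)$ when both domains are bounded. Your kernel-comparison idea for injectivity can also be made rigorous, much as in the injectivity part of Lemma~\ref{Lemma prior to the main Stein theorem}: continue a branch of $K_\Omega(F(z),\overline{F(w)})^{\lambda}$ in $w$ along paths avoiding its zero set. The paper instead gets single-valuedness and injectivity from $L\circ\beta_M=[H]\circ F$. Two smaller points. Your alternative via continuing $F^{-1}$ and completeness of $\omega_\Omega$ does not make sense. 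And the global identity $K_D(z,\bar w)=K_\Omega(z,\bar w)^{\lambda}h(z)\overline{h(w)}$ on $D\times D$ with single-valued $h$ is unjustified while $\lambda\neq1$ is open: $K_\Omega^\lambda$ has no global branch off the diagonal, and $h$ is only semi-single-valued (Proposition~\ref{preliminary}).

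The real gap is the step you flag as the main obstacle.
\begin{itemize}
\item Extending $h$ across $E$ is not the difficulty. By Proposition~\ref{f over varphi is semi-single-valued}, $1/\varphi$ and $|1/\varphi|^2$ extend to all of $\Omega$ with no symmetry at all. That extension says nothing about $\lambda$ or $m(\Omega\setminus D)$ when $D$ is not pseudoconvex, so a Kontinuit\"atssatz along orbit discs gains nothing.
\item Your use of Zariski density is logically wrong. An analytic set disjoint from a Zariski-dense $X$ need not be empty; density only says $X$ lies in no proper analytic subset.
\item The derivation of $\lambda=1$ from completeness and kernel blow-up is circular. The map $f\mapsto hf|_D$ is unitary only when $K_D=|h|^2K_\Omega$, i.e.\ when $\lambda=1$ is already known. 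Also, Bergman completeness is not the same as blow-up of $K_\Omega(z,z)$ at $\partial\Omega$.
\end{itemize}

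What is missing is the mechanism by which symmetry forces $\lambda=1$ (Proposition~\ref{The lemma that says in the BH case that lambda equals 1}).
\begin{itemize}
\item For each $G\in\operatorname{Aut}(\Omega)$, compare $D$ with $D_G=G^{-1}(D)$. Apply Calabi rigidity to two immersions on $O_G=G^{-1}(O)$:
\begin{itemize}
\item the continuation of $H=(\phi_k/\varphi)_k$;
\item the immersion built from an orthonormal basis of $A^2(D_G)$ divided by $\widehat\varphi_G=(\varphi\circ G)(\det JG)^{1-\lambda}$.
\end{itemize}
This yields a unitary $\mathcal U_G\colon A^2(D)\to A^2(D_G)$ with $\mathcal U_G(z^\alpha)=z^\alpha h_G$.
\item Uniqueness in the compactly supported complex moment problem gives $\chi_D\,dV=|h_G|^2\chi_{D_G}\,dV$. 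Hence $m(D\triangle D_G)=0$ and $h_G$ is a unimodular constant.
\item It follows that $K_\Omega^{1-\lambda}u$ is constant on every $\operatorname{Aut}(\Omega)$-orbit, where $u$ is the real-analytic extension of $|1/\varphi|^2$.
\item Zariski density is used only to pick $p_0\in X$ outside the proper analytic set $\{u=0\}$. Restricting the invariant function to a holomorphic curve $\eta$ in $\Gamma_{p_0}$ and applying $i\partial\overline{\partial}\log$ gives $(1-\lambda)\eta^*\omega_\Omega=0$, so $\lambda=1$.
\end{itemize}

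Only then does Proposition~\ref{Theorem I'} finish the proof.
\begin{itemize}
\item $\varphi$ is single-valued and nowhere vanishing on $D$, via the meromorphic function $K_D(z,p)/K_\Omega(z,p)$ and Riemann extension.
\item $Tf=\varphi f|_D$ is a unitary map $A^2(\Omega)\to A^2(D)$.
\item A second moment argument gives $\chi_\Omega\,dV=|\varphi|^2\chi_D\,dV$. So $m(\Omega\setminus D)=0$, $\varphi$ is a unimodular constant, and restriction is unitary.
\end{itemize}
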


The hypotheses imposed on the target $\Omega$ are satisfied, in particular,
by bounded homogeneous domains. More generally, they are also satisfied
when $\Omega$ is the product of a bounded homogeneous domain and another
bounded domain with complete Bergman metric. In both cases, one may take
$X=\Omega$, since the automorphism orbit through every point contains a nonconstant
holomorphic curve. We therefore obtain the following consequence.

\begin{cor}\label{The new main theorem 2s corollary is the bounded homogeneous domain}
Let $M\subseteq\mathbb{C}^n$, $n\geq 2$, be a bounded domain with a Bergman
metric $\omega_M$. Let $\Omega\subseteq\mathbb{C}^n$ be either a bounded
homogeneous domain or the product of a bounded homogeneous domain and
another bounded domain with a complete Bergman metric. Let $\omega_{\Omega}$
denote the Bergman metric of $\Omega$. Assume that there exist an open
connected set $U\subseteq M$ and a holomorphic map $f:U\to\Omega$ such that
\begin{equation}\label{Normalizing condition-222}
\omega_M=\Lambda f^*(\omega_\Omega),
\end{equation}
where $\Lambda$ is a positive function on $U$. Then $f$ extends to a biholomorphism $F\colon M\to D$ onto a subdomain
$D\subseteq\Omega$ such that $\Omega\setminus D$ is a
Bergman-negligible subset of $\Omega$.
\end{cor}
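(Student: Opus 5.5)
The plan is to deduce the corollary directly from Theorem~\ref{Main result - bounded homogoeneous case}. All hypotheses on $M$, $U$, $f$ and $\Lambda$ are the same, so the only thing to check is the symmetry hypothesis on $\Omega$. We take $X=\Omega$; it is analytically Zariski dense because it is a nonempty open subset of itself. Completeness of $\omega_\Omega$ is also needed. It holds for a bounded homogeneous domain, since it is Kobayashi complete, hence hyperconvex, and so Bergman complete by Błocki--Pflug/Herbort. Alternatively, the Bergman metric is $\operatorname{Aut}(\Omega)$-invariant and the automorphism group acts transitively, and a homogeneous Riemannian manifold is complete. For a product $\Omega=\Omega_1\times\Omega_2$, the Bergman kernel factors as $K_\Omega=K_{\Omega_1}K_{\Omega_2}$. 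Hence $\omega_\Omega$ is the product metric, which is complete because both factors are.

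It remains to show that for every $p\in\Omega$ the $\operatorname{Aut}(\Omega)$-orbit through $p$ contains a nonconstant holomorphic curve.

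\emph{Homogeneous case.} The orbit of $p$ is all of $\Omega$, so any nonconstant disc $\gamma(\zeta)=p+\epsilon\zeta v$ with $\epsilon$ small lies in the orbit.

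\emph{Product case.} Write $p=(p_1,p_2)$ with $p_1$ in the homogeneous factor $\Omega_1$. For $g\in\operatorname{Aut}(\Omega_1)$, the map $g\times\mathrm{id}$ is an automorphism of $\Omega$. Hence the orbit of $p$ contains $\Omega_1\times\{p_2\}$. We then take $\gamma(\zeta)=(p_1+\epsilon\zeta v,\,p_2)$, with $v$ a nonzero vector in the $\Omega_1$-directions; this requires $\dim\Omega_1\ge1$, which we assume since the homogeneous factor is a genuine domain.

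With these checks, Theorem~\ref{Main result - bounded homogoeneous case} gives the biholomorphic extension $F\colon M\to D$ with $\Omega\setminus D$ Bergman-negligible. There is no real obstacle. The only step needing a citation rather than a one-line argument is completeness of the Bergman metric on a bounded homogeneous domain, and the transitivity argument above handles it.
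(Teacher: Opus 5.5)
Your proposal is correct and follows essentially the same route as the paper: apply the main theorem with $X=\Omega$, using transitivity of $\operatorname{Aut}(\Omega)$ in the homogeneous case, and using the automorphisms $g\times\mathrm{id}$ to place $\Omega_1\times\{p_2\}$ inside each orbit in the product case. One caution: your first justification of Bergman completeness fails as stated, because Kobayashi completeness does not imply hyperconvexity in general (the punctured disc is Kobayashi complete but not hyperconvex), so rely instead on your second argument, that the Bergman metric is $\operatorname{Aut}(\Omega)$-invariant and a Riemannian manifold with a transitive isometry group is complete, which is valid and supplies the justification the paper leaves implicit.
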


Specializing the above corollary to the case where the model domain
$\Omega$ is a bounded symmetric domain, we answer a question of
Loi--Palmieri \cite{LoPa26} and Zimmer \cite{Zi25} concerning the
characterization, up to Bergman-negligible sets, of bounded domains whose
Bergman metrics are locally symmetric (see \S\ref{Subsection where Theorem 5 is proved} or
\cite{LoPa26} for the definition).

\begin{cor}\label{Corollary 8 - locally symmetric Bergman metrics and the image is a domain minus a Bergman negligible subset}
Let $M\subseteq\mathbb{C}^n$, $n\geq 2$, be a bounded domain whose Bergman
metric is locally symmetric. Then there exists a bounded symmetric domain
$\Omega\subseteq\mathbb{C}^n$ such that $M$ is biholomorphic to
$\Omega\setminus E$ for some Bergman-negligible subset $E\subseteq\Omega$.
\end{cor}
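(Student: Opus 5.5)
Corollary~\ref{Corollary 8 - locally symmetric Bergman metrics and the image is a domain minus a Bergman negligible subset} will be deduced from Corollary~\ref{The new main theorem 2s corollary is the bounded homogeneous domain}. To do so we first produce, near some point of $M$, a local holomorphic map $f$ into a bounded symmetric domain $\Omega$ with $\omega_M=\lambda f^*(\omega_\Omega)$ for a constant $\lambda>0$.

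The first step is to fix a point $p\in M$ and use local symmetry. Since $\omega_M$ is locally symmetric, $\nabla R=0$, so near $p$ the Kähler manifold $(M,\omega_M)$ is locally isometric to a Hermitian symmetric space. By the de Rham decomposition, its simply connected model splits as
\[
N=N_0\times N_-\times N_+ ,
\]
a product of flat, noncompact-type and compact-type factors, with the local isometry holomorphic because the complex structure is parallel. The central task is to rule out the flat and compact factors. For this we use the fact that the Bergman metric of a bounded domain has holomorphic sectional curvature bounded above by a negative constant on all of $M$. Lu's classical estimate gives a bound $<2$ in the normalization where $\omega_M$ is the metric itself, and the classical comparison with the Bergman metric of a product, via the Kobayashi embedding into projective space, gives strictly negative holomorphic sectional curvature. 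A nonzero compact factor would carry positive holomorphic sectional curvature on tangent directions in that factor. A nonzero flat factor would carry zero curvature. Either conclusion contradicts negativity, so $N=N_-$ is irreducible-decomposable of noncompact type.

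Next we realize the model as a domain. By the Harish-Chandra embedding, $N_-$ is biholomorphic to a bounded symmetric domain $\Omega=\Omega_1\times\cdots\times\Omega_k$ with irreducible $\Omega_j$. The locally symmetric metric is a product $\sum_j c_j\,\omega_{\Omega_j}$ with positive constants $c_j$. Here lies a subtlety: Corollary~\ref{The new main theorem 2s corollary is the bounded homogeneous domain} needs $\omega_M=\Lambda f^*\omega_\Omega$, so the $c_j$ must be absorbed. We handle this by rescaling each factor of the target. If $\widetilde\Omega$ is another bounded symmetric domain realization, the Bergman metric of $\Omega_j$ is a fixed multiple $\gamma_j$ of the invariant metric. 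To match $c_j$ we cannot scale domains, however. Instead, we should show the $c_j$ agree using a Bergman-metric property. The Bergman kernel of $M$ near $p$ gives $\omega_M=i\partial\bar\partial\log K_M$. Polarizing, Calabi's diastasis for $\omega_M$ is the pullback of the Fubini--Study diastasis under the Kobayashi map $M\to\mathbb{P}(A^2(M)^*)$. Hence $f^{-1}$ locally gives a holomorphic isometry from $(\Omega,\sum c_j\omega_{\Omega_j})$, near a point, into projective space with the Fubini--Study metric. By Calabi's criterion (resolvability of rank $\infty$), each $\omega_{\Omega_j}$ being Bergman makes $c_j\omega_{\Omega_j}$ locally projectively induced iff $c_j$ lies in the Wallach-type set. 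I expect this matching of weights to be the main obstacle. The cleanest route is to observe that the Bergman kernel of $M$ can be normalized so that $K_M(z,\bar z)$ restricted near $p$ equals a product of powers $\prod K_{\Omega_j}^{c_j/\gamma_j}$ times $|h|^2$, and to use the transformation rule together with Corollary~\ref{The new main theorem 2s corollary is the bounded homogeneous domain}'s proof mechanism. Alternatively, we appeal to the paper's treatment in \S\ref{Subsection where Theorem 5 is proved}, which presumably shows that the Bergman metric of a product being locally isometric forces equal weights. If this fails, we would try instead to apply Theorem~\ref{Main result - bounded homogoeneous case} to a weighted target.

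Assuming the weights are equal, so that $\omega_M=\lambda f^*\omega_\Omega$ locally, we conclude directly. $\Omega$ is bounded homogeneous, and its Bergman metric is complete. Corollary~\ref{The new main theorem 2s corollary is the bounded homogeneous domain} then gives a biholomorphism $F\colon M\to D$ with $E=\Omega\setminus D$ Bergman-negligible. This means $M\cong\Omega\setminus E$, as claimed.
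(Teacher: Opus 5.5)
Your closing step is exactly how the paper concludes. Once you have a local holomorphic $f$ into a bounded symmetric domain $\Omega$ with $\omega_M=\lambda f^*\omega_\Omega$ for a \emph{single} constant $\lambda$, you apply Corollary~\ref{The new main theorem 2s corollary is the bounded homogeneous domain}. The paper obtains that input by citing the beginning of Loi--Palmieri's proof (with Helgason).

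Your proposal never produces this input, and the obstacle you flag is a genuine gap. Local symmetry only gives a local holomorphic isometry onto $(\Omega_1\times\cdots\times\Omega_k,\sum_j c_j\omega_{\Omega_j})$. The Bergman metric of a product, however, is the unweighted sum $\sum_j\omega_{\Omega_j}$. So a single-constant identification with a bounded symmetric domain exists only if all $c_j$ coincide; otherwise Corollary~\ref{The new main theorem 2s corollary is the bounded homogeneous domain} cannot be used.

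None of your routes establishes equal weights:
\begin{itemize}
\item Calabi's projective-inducedness (Wallach-set) criterion cannot separate weights. For $\mathbb D\times\mathbb D$, the metric $c_1\omega_{\mathbb D}+c_2\omega_{\mathbb D}$ is projectively induced for all $c_1,c_2>0$, since every positive power of $(1-|z|^2)^{-1}$ has a power series with positive coefficients.
\item \S\ref{Subsection where Theorem 5 is proved} contains no result about unequal weights.
\item Your last paragraph simply assumes the conclusion.
\end{itemize}
Your ``cleanest route'' points in a workable direction but is not carried out. One way to do it is to rerun Lemma~\ref{Lemma prior to the main Stein theorem} and Proposition~\ref{The lemma that says in the BH case that lambda equals 1} with the potential $\rho=\prod_jK_{\Omega_j}^{c_j}$ in place of $K_\Omega^\lambda$. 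The Calabi-type theorem still applies, because Bergman kernels of bounded symmetric domains are zero-free on the simply connected sets $\Omega_j\times\Omega_j$, so all real powers have global sesqui-holomorphic branches. Step~3 then makes $\prod_jK_{\Omega_j}^{1-c_j}u$ invariant under $\prod_j\operatorname{Aut}(\Omega_j)$, and holomorphic discs in each factor direction force $c_j=1$ for every $j$. You would still need to check that the injectivity argument and Steps~1--4 go through for this weighted potential.

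Separately, your exclusion of flat and compact factors rests on a claim that is not available. Neither Lu's estimate nor the Kobayashi embedding gives \emph{negative} holomorphic sectional curvature. Since $\beta_M$ is a holomorphic isometric embedding of $(M,\omega_M)$ into $(\mathbb P^\infty,\omega_{FS})$, the Gauss equation only bounds the holomorphic sectional curvature of $\omega_M$ from above by the positive curvature of $\omega_{FS}$. There is no general theorem that the Bergman metric of a bounded domain is negatively curved. Indeed, neither curvature upper bounds nor projective inducedness can exclude a local factor such as $(\mathbb P^m,k\omega_{FS})$ with $k\in\mathbb Z_{>0}$: it is itself projectively induced, with curvature at most that of $\omega_{FS}$. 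Excluding such factors needs genuinely global input, which the paper also takes from Loi--Palmieri.

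As written, you have shown neither that the local model is of noncompact type nor that its weights are equal. These are exactly the facts needed to invoke Corollary~\ref{The new main theorem 2s corollary is the bounded homogeneous domain}.
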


We note that Loi--Palmieri \cite{LoPa26} proved the conclusion of
Corollary~\ref{Corollary 8 - locally symmetric Bergman metrics and the image is a domain minus a Bergman negligible subset}
under the additional assumption that $M$ is a bounded pseudoconvex domain.
In this case, as they showed, the Bergman-negligible set $E$ is indeed pluripolar.

We next briefly discuss some of the difficulties in proving
Theorems~\ref{The main Stein result} and
\ref{Main result - bounded homogoeneous case}, and compare with proofs in earlier
works. As mentioned above, closely related rigidity problems for the
unit-ball model $\Omega=\bB^n$ have been extensively studied in
\cite{L65, HX20, DW22a, DW22b, HL24, ETX25, EbTrXi25}. In almost all of
these works, the explicit formula and special properties of the Bergman
kernel $K_{\bB^n}$ played a fundamental role. Such an explicit formula is
generally unavailable for a model domain $\Omega$. Moreover, as used in \cite{HX20, HL24, EbTrXi25}, another important feature
of the Bergman metric $\omega_{\bB^n}$ is that, for every positive constant
$\lambda$, the K\"ahler manifold $(\bB^n,\lambda\omega_{\bB^n})$ admits a
holomorphic isometric embedding into the infinite-dimensional projective
space $\mathbb{P}^{\infty}$; see Section~\ref{Section Preliminaries} for
the relevant definition. This can be seen directly from the power-series
expansion of $K_{\bB^n}^{\lambda}$, which can be expressed as an infinite
sum of squared moduli of holomorphic functions. The existence of such an immersion makes Calabi's rigidity theorem
\cite{Ca53} for K\"ahler immersions particularly effective in this setting. This embeddability property, however, need not hold
for a general model domain $\Omega$; indeed, it can fail even for bounded
symmetric domains of higher rank. The following example illustrates this
phenomenon.

\begin{example}\label{eg1}
Let $\Omega$ be the type I classical domain in $M(2,2,\mathbb C).$ That is,
$$
\Omega=D^{\mathrm I}_{2,2}
=
\left\{
Z\in M(2,2,\mathbb C): I_2-ZZ^*>0
\right\},
$$
where $M(2,2,\mathbb C)$ denotes the space of $2\times2$ complex matrices
and $I_2$ denotes the $2\times2$ identity matrix. Let $\omega_\Omega$
denote the Bergman metric of $\Omega$. 
By a result of Loi--Zedda \cite[Theorem 2]{LoiZedda11}, $(\Omega,\lambda\omega_\Omega)$ admits a holomorphic isometric immersion
into $\mathbb P^\infty$ if and only if
$\lambda\geq\frac14$. 
Thus, unlike the unit ball, an arbitrary positive multiple of the Bergman
metric of a higher-rank bounded symmetric domain need not admit a
holomorphic isometric immersion into $\mathbb P^\infty$. More general
examples of this phenomenon, for irreducible bounded symmetric domains,
can be found in Loi--Zedda \cite{LoiZedda11}.
\end{example}

Beyond the difficulties discussed above, there is an additional difficulty specific to
Theorem~\ref{Main result - bounded homogoeneous case} due to the presence
of the conformal factor $\Lambda$. As noted above, when $n\geq 2$, the
K\"ahler condition implies that $\Lambda$ must be a positive constant
$\lambda$. A major step in the proof of
Theorem~\ref{Main result - bounded homogoeneous case} is then to show that
$\lambda=1$, thereby reducing the problem to the case of an exact local
isometry. By contrast, in Theorem~\ref{The main Stein result}, the Stein
assumption on $M$ allows us to bypass this  step; there is no
need to first prove that $\lambda=1$. As a result, the proof of
Theorem~\ref{Main result - bounded homogoeneous case} is more involved than
that of Theorem~\ref{The main Stein result}.

To overcome these difficulties, we develop a new Calabi-type extension theorem that is well adapted to the geometry of the Bergman metric. The development of this method is inspired in part by Mok's work \cite{Mo12} on the extension and rigidity of local holomorphic isometries. We believe that this Calabi-type extension theorem, together with the accompanying continuation method, may be useful more broadly in the study of Bergman isometric mapping problems.

In the setting of Theorem~\ref{Main result - bounded homogoeneous case},
once we know that $\lambda=1$, so that $f$ is an exact local isometry, the
rigidity conclusion in fact holds in considerably greater generality, as
shown by the following theorem.


\begin{theorem}\label{Lambda = 1 Theorem}
Let $M\subseteq\mathbb{C}^n$, $n \geq 1,$ be a bounded domain with a Bergman
metric $\omega_M$. Let $\Omega\subseteq\mathbb{C}^n$ be a bounded domain
with a complete Bergman metric $\omega_\Omega$. Assume that there exist an
open connected set $U\subseteq M$ and a holomorphic map
$f\colon U\to\Omega$ such that
\begin{equation}\label{Exact condition}
f^*(\omega_\Omega)=\omega_M.
\end{equation}
Then $f$ extends to a biholomorphism $F\colon M\to D$ onto a subdomain
$D\subseteq\Omega$ such that $\Omega\setminus D$ is a
Bergman-negligible subset of $\Omega$.
\end{theorem}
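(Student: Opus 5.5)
The plan has three stages: extend $f$ to all of $M$, show the extension is an injective open map onto a domain $D$, and then show that $\Omega\setminus D$ is Bergman-negligible.

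\emph{Step 1: extension.} Since $M$ and $\Omega$ are bounded domains and $f^*\omega_\Omega=\omega_M$ on $U$, Mok's extension theorem \cite[Theorem~2.1.1]{Mo12} shows that $f$ extends to a holomorphic isometric immersion
\[
F\colon (M,\omega_M)\to(\Omega,\omega_\Omega).
\]
Alternatively, one can use the Calabi-type extension theorem developed later in the paper. Because both metrics are K\"ahler and $F$ is a local isometry between equidimensional manifolds, $F$ is locally biholomorphic. The isometric relation, together with Calabi's diastasis (Bergman kernels are real-analytic), gives
\[
\log K_M(z,\bar z)=\log K_\Omega(F(z),\overline{F(z)})+\log|h(z)|^2
\]
for a holomorphic $h$. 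My main route is a stronger, polarized identity: I will show that
\[
K_M(z,\bar w)=J_F(z)\,K_\Omega(F(z),\overline{F(w)})\,\overline{J_F(w)}
\]
holds on $U\times U$, and hence on all of $M\times M$ by analyticity. Here $J_F$ is the complex Jacobian determinant. To get this, compare $K_M$ with the pullback of $K_\Omega$ via the Jacobian transformation law on $U$. The volume forms agree, so $|J_F|^2K_\Omega\circ F$ and $K_M$ differ by $|g|^2$ with $g$ holomorphic. Then show $g$ is constant and equal to $1$ by comparing the Bergman-space structures. I expect this normalization, which is exactly where $\lambda=1$ enters, to be delicate.

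\emph{Step 2: injectivity and openness.} From the polarized identity, $F(z)=F(w)$ forces the normalized kernels $K_M(\cdot,\bar z)/\sqrt{K_M(z,\bar z)}$ and $K_M(\cdot,\bar w)/\sqrt{K_M(w,\bar w)}$ to have the same pairing with every kernel function, hence to be proportional. The Bergman metric of $M$ is well defined, so $M$ is Bergman-nondegenerate and in particular points are separated, i.e.\ kernel functions at distinct points are not proportional. Hence $z=w$, and $F\colon M\to D:=F(M)$ is a biholomorphism onto an open subdomain.

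\emph{Step 3: negligibility.} Define
\[
T\colon A^2(M)\to A^2(D),\qquad T(\phi)=(\phi\circ F^{-1})\,J_{F^{-1}},
\]
which is an isometry by the change of variables. The polarized identity says that $T$ carries $K_M(\cdot,\bar w)$ to $K_\Omega(\cdot,\overline{F(w)})|_D$, up to the Jacobian factor. Since the kernel functions span a dense subspace of $A^2(M)$, the range $A^2(D)=T(A^2(M))$ lies in the closure of the restrictions of $A^2(\Omega)$. Restriction $A^2(\Omega)\to A^2(D)$ is isometric on this span, because $\|K_\Omega(\cdot,\bar q)\|_D=\|K_\Omega(\cdot,\bar q)\|_\Omega$ follows from the identity. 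Hence every element of $A^2(D)$ is the restriction of an element of $A^2(\Omega)$, so functions extend. Norm preservation also gives $\int_{\Omega\setminus D}|K_\Omega(\cdot,\bar q)|^2=0$, so $\Omega\setminus D$ has measure zero. It remains to show that $\Omega\setminus D$ is closed in $\Omega$, i.e.\ that $D$ is open, which Step 2 already gives.

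\emph{Main obstacle.} The only subtle point left is that $\Omega\setminus D$ might a priori be large in a way that conflicts with the density statement. I expect completeness of $\omega_\Omega$ to be used here, together with the isometric restriction, to rule out $D$ missing an open set; measure zero already excludes that. So the genuine obstacle remains establishing the polarized kernel identity with $g\equiv1$ in Step 1.
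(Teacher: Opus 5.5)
\textbf{The normalization $g\equiv1$ is never proved, and it carries the whole theorem.} Your Steps 2 and 3 are essentially fine \emph{once} the normalized identity $K_M(z,\bar w)=J_F(z)K_\Omega(F(z),\overline{F(w)})\overline{J_F(w)}$ on $M\times M$ is available. It gives injectivity by point separation. It also makes restriction $A^2(\Omega)\to A^2(D)$ preserve inner products on the dense span of $\{K_\Omega(\cdot,\bar q)\}_{q\in D}$, hence unitary, which yields extension of $A^2(D)$ and $m(\Omega\setminus D)=0$. But that identity is the entire content of the theorem, and your proposal does not establish it. From $f^*\omega_\Omega=\omega_M$ you only get that $\log K_M-\log(|J_F|^2K_\Omega\circ F)$ is pluriharmonic. (That is the reason the two differ by $|g|^2$; agreement of volume forms is not.) So $g$ is a priori a local, possibly multi-valued, zero-free holomorphic function, and ``comparing the Bergman-space structures'' gives no mechanism forcing $|g|\equiv1$. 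Your ordering makes this harder still. You try to normalize $g$ before knowing $F$ is injective, so there is not yet a subdomain $D\subseteq\Omega$ whose Bergman space can be compared with $A^2(\Omega)$.

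\textbf{What the paper does to fill this step.} It first gets the biholomorphism $F\colon M\to D$. This uses the Calabi-type embedding $[H]$ from Lemma~\ref{Lemma prior to the main Stein theorem}, the identity $L\circ\beta_M=[H]\circ F$, and injectivity of the Bergman--Bochner map. Hence $\omega_D=\omega_\Omega$ on $D$, and the paper then proves Proposition~\ref{Theorem I'} in three moves.
\begin{enumerate}
\item Write $K_D=|\varphi|^2K_\Omega$ near $p$. The quotient $\Phi(z)=K_D(z,p)/\bigl(\overline{\varphi(p)}K_\Omega(z,p)\bigr)$ is meromorphic on $D$ with $|\Phi|^2=K_D/K_\Omega$. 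By Riemann extension, $\varphi$ becomes a single-valued zero-free holomorphic function on $D$ with $K_D(z,w)=\varphi(z)\overline{\varphi(w)}K_\Omega(z,w)$ on $D\times D$.
\item The assignment $K^\Omega_w\mapsto K^D_w/\overline{\varphi(w)}$ extends to a unitary $T\colon A^2(\Omega)\to A^2(D)$ with $Tf=\varphi f|_D$.
\item Since both domains are bounded, all monomials lie in both Bergman spaces. Pairing them shows that $\chi_\Omega\,dV$ and $|\varphi|^2\chi_D\,dV$ have the same complex moments, so the two measures coincide. This forces $m(\Omega\setminus D)=0$ and $|\varphi|\equiv1$, so $\varphi$ is a unimodular constant.
\end{enumerate}
This moment-problem step is the idea your proposal is missing. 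Your Step 3 is just its final part with $\varphi\equiv1$ already assumed.

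\textbf{Fixing your ordering.} Apply the same meromorphic-quotient trick directly on $M$ to obtain a global single-valued $g$ with the polarized identity. That already gives injectivity as in your Step 2, without knowing $g\equiv1$.

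\textbf{Where completeness actually enters.} Your closing paragraph misplaces it. Completeness of $\omega_\Omega$ is used only to continue $f$ along curves (Kobayashi's theorem, or inside Mok's theorem). The measure-zero conclusion and $|\varphi|\equiv1$ come from boundedness via the moment problem. Closedness of $\Omega\setminus D$ is immediate from openness of $D=F(M)$.
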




We conclude this section with the following remark.

\begin{remark}\label{Remark where we say that everything holds if it is also a Bergman nondegenerate complex manifold}
Bounded domains in $\mathbb{C}^n$ are examples of Bergman-nondegenerate
complex manifolds, that is, complex manifolds whose Bergman spaces separate
points and give rise to nondegenerate Bergman metrics; see Section~\ref{Section Preliminaries} for
the precise definition. Theorems~\ref{Main result - bounded homogoeneous case} and
\ref{Lambda = 1 Theorem}, as well as
Corollaries~\ref{The new main theorem 2s corollary is the bounded homogeneous domain}
and
\ref{Corollary 8 - locally symmetric Bergman metrics and the image is a domain minus a Bergman negligible subset},
remain valid under the weaker assumption that $M$ is a
Bergman-nondegenerate complex manifold; see their proofs in \S\ref{Subsection where Theorem 5 is proved}. In particular,
Theorem~\ref{Main theorem bounded homogeneous case - but with M a complex manifold that separates points} in \S\ref{Subsection where Theorem 5 is proved}
 gives the version of
Theorem~\ref{Main result - bounded homogoeneous case} under this weaker
hypothesis. 

\end{remark}

The paper is organized as follows. In \S\ref{Section Preliminaries}, we
present the background needed for the proofs. In
\S\ref{sec: new calabi thm}, we develop a new Calabi-type extension
theorem. In \S\ref{Section Theorem 1 proved}, we prove
Theorem~\ref{The main Stein result}. As preparation for the proof of
Theorem~\ref{Main result - bounded homogoeneous case}, in
\S\ref{Section that proves the bounded homogeneous case} we establish several preliminary
results. Finally, in
\S\ref{Subsection where Theorem 5 is proved}, we prove
Theorem~\ref{Main result - bounded homogoeneous case} and its corollaries,
as well as Theorem~\ref{Lambda = 1 Theorem}.
\medskip

\textbf{Acknowledgements: }Before posting this paper on arXiv, we learned that Yuan had independently
obtained some similar results in his preprint \cite{Yu26}, especially in
connection with Theorem~\ref{The main Stein result} (see also Lemma~\ref{Proposition that shows that Omega minus D is a closed pluripolar subset}) and
Theorem~\ref{Lambda = 1 Theorem} (see also Proposition~\ref{Theorem I'}) of the present paper. Our approaches differ substantially. Yuan's proof uses Irgens' $L^2$-envelope of holomorphy, whereas our proof relies on a new Calabi-type extension theorem that we develop in this paper.


\section{Preliminaries}\label{Section Preliminaries}
\subsection{The Bergman space, kernel and metric}
Let $M$ denote an $n$-dimensional complex manifold. Throughout the paper,
all complex manifolds are assumed to be connected. The Bergman space of
$M$, denoted by $A^2_{(n,0)}(M)$, is the Hilbert space of holomorphic
$(n,0)$-forms with finite $L^2$-norm, with inner product
$$
\langle \Phi,\Psi\rangle
=
\frac{i^{n^2}}{2^n}\int_M\Phi\wedge\overline{\Psi}.
$$
Since $A^2_{(n,0)}(M)$ is separable, it admits an orthonormal basis
$\{\Phi_k\}_{k=1}^N$, where
$N=\dim A^2_{(n,0)}(M)$ may be finite or countably infinite. The Bergman kernel
of $M$ is defined by
\begin{equation}\label{Orthonormal series representation}
K_M
=
\sum_{k=1}^N \Phi_k\wedge\overline{\Phi_k}.
\end{equation}
This definition is independent of the choice of orthonormal basis.
In local coordinates $z=(z_1,\ldots,z_n)$ and
$w=(w_1,\ldots,w_n)$, the Bergman kernel can be written as
$$
K_M(z,w)
=
K_M^*(z,w)\,
dz_1\wedge\cdots\wedge dz_n
\wedge
d\overline{w}_1\wedge\cdots\wedge d\overline{w}_n.
$$
In our arguments on general complex manifolds, whenever the representing
function of the Bergman kernel is used, we work in a fixed coordinate
chart and, by an abuse of notation, write $K_M$ for $K_M^*$. Henceforth,
we will not use the notation $K_M^*$.
Under this convention, $K_M(z,w)$ is holomorphic in $z$ and
anti-holomorphic in $w$, and \eqref{Orthonormal series representation}
gives $K_M(z,z)\geq0$. We will frequently restrict the Bergman kernel to
the diagonal and write $K_M(z):=K_M(z,z)$, or suppress the arguments
entirely when there is no ambiguity. In particular, $\log K_M$ will
always mean $\log K_M(z,z)$.

For a domain $\Omega\subseteq\mathbb C^n$, let $A^2(\Omega)$ denote the
Hilbert space of square-integrable holomorphic functions on $\Omega$.
We will use the Euclidean coordinates to naturally identify $A^2(\Omega)$ with
$A^2_{(n,0)}(\Omega)$, and identify the Bergman
kernel with its representing function. We will use the reproducing
property
$$
\phi(z)
=
\int_\Omega \phi(w)K_\Omega(z,w)\,dV(w),
\qquad \phi\in A^2(\Omega).
$$
Here and throughout the paper, $dV$ denotes the Euclidean volume element.
We will also use the transformation law for the Bergman kernel. Namely, if
$F\colon\Omega\to\Omega'$ is a biholomorphism, then
$$
K_\Omega(z,w)
=
\det JF(z)\,
\overline{\det JF(w)}\,
K_{\Omega'}(F(z),F(w)),
$$
where $JF$ denotes the complex Jacobian matrix of $F$.

For a complex manifold $M$, its Bergman space is said to be base-point free if $K_M(z, z) > 0$ for every point $z \in M$.  Its Bergman space is said to separate holomorphic directions if for every $z \in M$ and every nonzero $(1, 0)$ vector $Z$, there is a $g \in A^2_{(n, 0)}(M)$ such that its representative $g^*$ in local coordinates vanishes at $z$, but $(Zg^*)|_{z} \neq 0$.  Any complex manifold $M$ has a Bergman space that satisfies these two properties if and only if it admits a well-defined (nondegenerate) K\"ahler metric called the Bergman metric, \cite{K59}.  Locally, the K\"ahler $(1, 1)$-form $\omega_M$ for the Bergman metric is defined by
$$
\omega_M = i\partial\overline{\partial}\log K_M,
$$
where once again $K_M$ has been identified with its local representative.  The Bergman metric is a biholomorphically invariant metric, meaning that if $f:M \to N$ is a biholomorphism between complex manifolds admitting Bergman metrics, then $f^*\omega_N = \omega_M.$  

A Bergman space is said to separate points if for any two distinct points $z_1$ and $z_2$, there is an $(n, 0)$-form $f \in A^2_{(n, 0)}(M)$ such that $f$ vanishes at $z_1$, but not at $z_2$.  A complex manifold $M$ that admits a Bergman metric and whose Bergman space separates points is called Bergman-nondegenerate. Every bounded domain in $\mathbb C^n$ is Bergman-nondegenerate. Such manifolds were considered extensively in \cite{HL24, ETX25, BhGaNaXi26} in the classification of Bergman metrics of constant holomorphic sectional curvature.


\subsection{Some auxiliary classical results}\label{sec: some auxiliary results}
The Bergman kernel on the diagonal can always be locally expressed as a sum
of squared moduli of holomorphic functions. This feature of its definition
naturally leads one to consider  holomorphic isometric immersions of a
complex manifold endowed with its Bergman metric into projective space
equipped with the Fubini--Study metric.  Let
$$
\ell^2=\left\{(z_1,z_2,\ldots):\sum_{j=1}^{\infty}|z_j|^2<\infty\right\}.
$$
We say that a map $F$ from a complex manifold $M$ to $\ell^2$ is
holomorphic if, for every $p\in M$, there exist a neighborhood $W$ of $p$
and a sequence of holomorphic functions $\{f_k\}_{k=1}^{\infty}$ on $W$
such that $\sum_{k=1}^{\infty}|f_k|^2$ converges uniformly on compact
subsets of $W$ and
$$
F=(f_1,f_2,\ldots)\quad\text{on }W.
$$
We further define, for $2\leq N\leq\infty$,
$$
\mathbb P^{N-1}
=
\begin{cases}
(\mathbb C^N\setminus\{0\})/\sim, & N<\infty,\\
(\ell^2\setminus\{0\})/\sim, & N=\infty,
\end{cases}
$$
where ${\bf x}\sim{\bf y}$ if and only if there exists
$\lambda\in\mathbb C^*$ such that ${\bf x}=\lambda{\bf y}$.
When $N=\infty$, we interpret $\mathbb P^{N-1}$ as
$\mathbb P^\infty$ throughout the paper. The Fubini--Study metric on $\mathbb P^{N-1}$, understood formally when
$N=\infty$, is given in homogeneous coordinates $[z_1,z_2,\ldots]$ by
$$
\omega_{FS} = i\partial\overline{\partial}\log(\sum_{j=1}^N |z_j|^2).
$$
A  holomorphic isometric immersion into $\mathbb{P}^N$ is defined as follows:

\begin{definition}
    A map $\mathcal{F}$ from a K\"{a}hler manifold $(M, \omega)$ to $(\mathbb{P}^{N-1}, \omega_{FS})$, $2 \leq N \leq \infty$, is called a holomorphic isometric immersion if for all $p \in M$, there exist a neighborhood $W$ of $p$ and a sequence of holomorphic functions $\{f_k\}_{k=1}^{N}$ on $W$ such that
    \begin{enumerate}
        \item The $f_k$'s have no common zeros on $W$ and $\sum_{k=1}^{N} |f_k|^2$ converges uniformly on $W$ when $N=\infty;$
        \item  It holds that
        $$
        i\partial\overline{\partial}\log \sum_{k=1}^{N} |f_k|^2 = \omega, \quad \hbox{ on $W$;}
        $$
        \item On $W$,
        $$
        \mathcal{F} = [F] := \begin{cases}
             [f_1, \ldots, f_N]& N < \infty\\
             [f_1, f_2, \ldots]& N = \infty.
             \end{cases}
             $$

    \end{enumerate}
    The map $F = (f_1, \ldots f_N): W \to \mathbb{C}^N$ or respectively $F = (f_1, \ldots, f_n, \ldots): W \to \ell^2$ is called a representation of $\mathcal{F}$ on $W$. If, in addition, $\mathcal{F}$ is injective, we call it a holomorphic
isometric embedding.
\end{definition}

The  holomorphic isometric immersion from a complex manifold $M$ with a Bergman metric $\omega_M$ into projective space we will consider most frequently is the Bergman-Bochner map $\beta_M$.  Given an orthonormal basis $\{\Phi_j\}_{j=1}^N$, with local representatives $\{\phi_j\}_{j=1}^N$ in some coordinate chart, the Bergman-Bochner map is defined locally by
$$
\beta_M =
\begin{cases}
    [\phi_1,\ldots \phi_N]& N < \infty \\
    [\phi_1, \phi_2 \ldots]& N = \infty.
\end{cases}
$$
The Bergman-Bochner map $\beta_M$ is injective if and only if the Bergman space $A^2_{(n, 0)}(M)$ separates points. 
 In the 1950s, Calabi \cite{Ca53} proved a rigidity and extension theorem regarding  holomorphic isometric immersions into projective space.
\begin{theorem}[Calabi's rigidity theorem, \cite{Ca53, HuLi25, HL12}]\label{thm:Rigidity}
Let $(M,\omega)$ be a K\"ahler manifold, and let
$F_1,F_2\colon (M,\omega)\to(\mathbb P^{N-1},\omega_{FS})$,
$2\leq N\leq\infty$, be two holomorphic isometric immersions.
For $j=1,2$, let $H_j$ be the smallest closed projective linear
subspace of $\mathbb P^{N-1}$ containing $F_j(M)$, equipped with the
induced Fubini--Study metric. Then there exists a projective linear
isometric isomorphism $T\colon H_1\to H_2$ such that
$F_2=T\circ F_1$.
\end{theorem}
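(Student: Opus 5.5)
The plan is to follow Calabi's original strategy: reduce the statement to a uniqueness statement for the potential $\log\sum|f_k|^2$. Work locally first, so assume $M$ is connected and simply connected; the general case then follows by passing to the universal cover, or by working on a small neighborhood and continuing. Fix a point $p\in M$ and a small coordinate neighborhood $W$ of $p$. Let $F_1=(f_1,f_2,\dots)$ and $F_2=(g_1,g_2,\dots)$ be representations of the two immersions on $W$. Then
\[
i\partial\overline{\partial}\log\|F_1\|^2=\omega=i\partial\overline{\partial}\log\|F_2\|^2 .
\]
Hence $\log\|F_2\|^2-\log\|F_1\|^2$ is pluriharmonic, and after shrinking $W$ it equals $2\,\mathrm{Re}\,h$ for some holomorphic $h$. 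Replacing $F_2$ by $e^{-h}F_2$, which represents the same projective map, we may assume that
\[
\|F_1(z)\|^2=\|F_2(z)\|^2 \quad\text{on } W .
\]

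The key step is polarization. The function $\langle F_j(z),F_j(w)\rangle$ is holomorphic in $z$ and antiholomorphic in $w$. In the case $N=\infty$ this uses the locally uniform convergence of $\sum|f_k|^2$, which gives convergence of the polarized series by Cauchy--Schwarz. Since the two such functions agree on the diagonal, they agree identically near $(p,p)$:
\[
\langle F_1(z),F_1(w)\rangle=\langle F_2(z),F_2(w)\rangle .
\]
Define $T_0$ on the linear span of $\{F_1(z):z\in W\}$ by $T_0F_1(z)=F_2(z)$, extended linearly. The identity above shows that $T_0$ is well defined and isometric: if $\sum c_iF_1(z_i)=0$, then $\|\sum c_iF_2(z_i)\|^2=\|\sum c_iF_1(z_i)\|^2=0$. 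The map $T_0$ extends by continuity to a unitary isomorphism $\tilde T$ from the closed span $L_1$ of $F_1(W)$ onto the closed span $L_2$ of $F_2(W)$, and $F_2=\tilde T\circ F_1$ on $W$.

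It remains to globalize. By the identity theorem applied to holomorphic maps into $\ell^2$, the closed span of $F_j(W')$ is the same for every nonempty open $W'$ in the connected manifold. This is the standard analyticity argument: if a vector $v$ is orthogonal to $F_j(W')$, then $\langle F_j,v\rangle$ vanishes on an open set, hence everywhere. Near any other point, representations differ from these by nonvanishing holomorphic factors, so this closed span is exactly the cone over $H_j$.

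Next, define $T$ on $H_1$ as the projectivization of $\tilde T$. The relation $F_2=T\circ F_1$ holds on $W$ and propagates along the connected manifold $M$. Indeed, on overlapping charts, the set where $[F_2]=[\tilde T F_1]$ holds is open and closed: in a neighborhood of any point, after rescaling representations, the same polarization argument produces a unitary map. That map agrees with $\tilde T$ on an open set, so it equals $\tilde T$ on the common span. Note that this rules out monodromy, so simple connectivity is not actually needed.

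The main obstacle I expect is the infinite-dimensional bookkeeping rather than the algebra. Specifically, three things need care: making sense of holomorphy of $z\mapsto F(z)\in\ell^2$, justifying the sesqui-holomorphic polarization for the infinite series, and proving the identity-theorem statement about spans. I will handle all three through locally uniform convergence and Cauchy--Schwarz.
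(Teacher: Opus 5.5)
The paper does not prove this statement; it quotes it as a classical result from Calabi and Huang--Li, so there is no in-paper argument to compare against. Your proof is the standard one, and it is correct. You normalize the local potentials so that $\|F_1\|^2=\|F_2\|^2$, polarize, use the resulting Gram-matrix identity to define a unitary between closed spans, and propagate by the identity theorem. This is also the device the paper uses when it builds the isometry $T(K_w^\Omega)=K_w^D/\overline{\varphi(w)}$ in the proof of Proposition~\ref{Theorem I'}.

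Two small points should be tightened.

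\emph{The polarized identity is needed on all of $W\times W$.} To define $T_0$ on the span of all of $F_1(W)$, you need $\langle F_1(z),F_1(w)\rangle=\langle F_2(z),F_2(w)\rangle$ for every $(z,w)\in W\times W$, not just near $(p,p)$. Take $W$ to be a coordinate ball, so that $W\times W$ is connected. The polarized series is jointly holomorphic in $(z,\bar w)$, since it is a locally uniform limit of jointly holomorphic partial sums. The identity theorem then extends the equality from a neighborhood of $(p,p)$ to all of $W\times W$.

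\emph{The unitaries agree only up to a unimodular constant.} In the open--closed propagation step, the unitary $\tilde T_q$ obtained near another point $q$ does not agree with $\tilde T$ exactly. On the overlap you get $\tilde T_qF_1^q=c\,\tilde T F_1^q$, with $c$ holomorphic and $|c|\equiv1$. Hence $c$ is constant on each connected component of the overlap. Your span lemma then gives $\tilde T_q=c\,\tilde T$ on all of $L_1$.

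This constant disappears after projectivization, so the conclusion $F_2=T\circ F_1$ on $M$ stands. As you note, no simple-connectivity assumption is needed.
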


\begin{theorem}[Calabi's extension theorem, \cite{Ca53, HuLi25, HL12}]\label{Calabi Extension Theorem}
Let $(M,\omega)$ be a K\"ahler manifold with real-analytic metric
$\omega$. Let $U\subseteq M$ be an open connected set containing $p$,
and let
$F\colon (U,\omega)\to(\mathbb P^{N-1},\omega_{FS})$,
$2\leq N\leq\infty$, be a holomorphic isometric immersion. Then, for
any continuous curve $\gamma\colon[0,1]\to M$ with $\gamma(0)=p$, $F$
admits holomorphic continuation along $\gamma$ as a holomorphic
isometric immersion into $(\mathbb P^{N-1},\omega_{FS})$.
Moreover, if $M$ is simply connected, then $F$ extends to a globally
defined holomorphic isometric immersion
$F\colon(M,\omega)\to(\mathbb P^{N-1},\omega_{FS})$.
\end{theorem}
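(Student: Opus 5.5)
The plan is to reduce everything to Calabi's diastasis and to a power-series description of isometric immersions into $\mathbb P^{N-1}$. Near any point $q\in M$, the real-analyticity of $\omega$ gives a real-analytic local potential $\varphi$ with $\omega=i\partial\overline\partial\varphi$. Its polarization $\varphi(z,\overline w)$ is holomorphic in $z$, antiholomorphic in $w$, and defined for $z,w$ in a small polydisc $P_q$ about $q$. I would then form the diastasis
$$
D_q(z,\overline w)=\varphi(z,\overline w)+\varphi(q,\overline q)-\varphi(z,\overline q)-\varphi(q,\overline w).
$$
This quantity is independent of the choice of potential, because changing $\varphi$ by a pluriharmonic function does not affect it.

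\textbf{Step 1 (normal form of the immersion).} Let $F=[f]$ be a holomorphic isometric immersion on a connected open set $W\ni q$. After applying a unitary map we may assume $F(q)=[1,0,0,\ldots]$. Setting $g=(f_2/f_1,f_3/f_1,\ldots)$, the two potentials $\log(1+|g|^2)$ and $\varphi$ differ by a pluriharmonic function. Normalizing at $q$ and polarizing, we obtain
$$
e^{D_q(z,\overline w)}-1=\sum_k g_k(z)\overline{g_k(w)}
$$
near $q$, with $g(q)=0$. Conversely, any such identity with $g$ holomorphic produces an isometric immersion $[1,g]$.

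\textbf{Step 2 (extension to a uniform polydisc).} Suppose an isometric immersion is known near some point $q'$ of the curve, and let $q\in P_{q'}$ be close to $q'$. Expand the holomorphic function $K(z,\overline w):=e^{D_q(z,\overline w)}-1$ at $(q,\overline q)$ as $\sum c_{\alpha\beta}(z-q)^\alpha\overline{(w-q)}^{\beta}$. Because Step~1 holds on a neighborhood of $q$ (the immersion is already defined there), the coefficient matrix $(c_{\alpha\beta})$ is the Gram matrix of the Taylor coefficient vectors of $g$. It is therefore positive semidefinite, so we can write $c_{\alpha\beta}=\langle v_\alpha,v_\beta\rangle$ with $v_\alpha\in\ell^2$ (or $\mathbb C^{N-1}$).

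Now set $G(z)=\sum_\alpha v_\alpha (z-q)^\alpha$. The bound
$$
\sum_\alpha |v_\alpha|^2 r^{2|\alpha|}\leq \sum_{\alpha}|c_{\alpha\alpha}|\,r^{2|\alpha|},
$$
which is finite as long as the diagonal series of $K$ converges, shows that $G$ is holomorphic into $\ell^2$ on the whole polydisc where the polarization converges. This holds not just on the small neighborhood where we started. Hence $[1,G]$ is an isometric immersion on a polydisc about $q$ of radius bounded below by the real-analyticity radius of $\omega$.

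\textbf{Step 3 (gluing and monodromy).} Cover $\gamma([0,1])$ by finitely many such polydiscs with radii uniformly bounded below; this is possible by compactness and the continuity of the analyticity radius. On each overlap, the old and new immersions are both isometric on a connected open set. Calabi's rigidity theorem (Theorem~\ref{thm:Rigidity}) then gives a projective unitary $T$ relating them, and replacing the new immersion by $T^{-1}$ composed with it makes the pieces agree. This yields continuation along $\gamma$ as an isometric immersion. The continuation is unique by the identity principle, since two continuations agreeing on an open set agree on connected overlaps. Continuation along homotopic curves gives the same germ by the usual monodromy argument, which combines uniform radii with uniqueness. Consequently, if $M$ is simply connected, the monodromy theorem produces a global extension.

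I expect the main obstacle to be Step~2: showing that positivity of the kernel, known only near the base point, persists out to the full domain of convergence of the polarized potential. The Gram-matrix factorization of the Taylor coefficients is the device meant to handle this, and one must check convergence carefully when $N=\infty$.
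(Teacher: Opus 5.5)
The paper gives no proof of this statement (it is quoted from Calabi \cite{Ca53}, see also \cite{HuLi25, HL12}). Your argument is essentially Calabi's original diastasis proof, and it is correct: the normal form via the diastasis, a positive semidefinite (``resolvable'') coefficient matrix, convergence of the resulting $\ell^2$-valued map on the whole polydisc where the polarized diastasis is holomorphic, and then chaining along $\gamma$ plus monodromy. The appeal to the rigidity theorem in Step~3 is not needed: if you take the $v_\alpha$ to be the Taylor coefficient vectors of $g$ itself, then $G$ is literally the Taylor series of $g$ and so continues $[1,g]$ directly.
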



\medskip

A tool that will be needed in Theorem \ref{Main result - bounded homogoeneous case}, when $M$ is not necessarily Stein, is a uniqueness theorem from the complex moment problem: Let $\{s_{\alpha\beta}\}_{\alpha, \beta \in \mathbb{N}^n}$ be a sequence of numbers; if there exists a nonnegative Borel measure $\mu$ with compact support such that
$$
\int_{\mathbb{C}^n}z^{\alpha}\bar{z}^{\beta} d\mu = s_{\alpha\beta}, \quad \alpha, \beta \in \mathbb{N}^n,
$$
then $\mu$ is unique. For more detailed explanations that expand on these preliminaries, we refer the reader to the survey portion of the article \cite{EbTrXi25}.

\subsection{Reduction of the conformal factor to a normalizing constant}\label{Section Theorem 2 proof}
Finally, in this subsection, we prove that when $n \geq 2$, the conformal factor $\Lambda$ in  Question~\ref{MainQuestion} is necessarily a constant $\lambda.$
This result should be well-known to experts, but we include a proof for the reader's convenience.

\begin{pro}\label{Conformal implies constant proposition in dimension greater than one}
    Let $M$ be a complex manifold of dimension $n$.  Let $\omega_1$, $\omega_2$ be two K\"{a}hler metrics on $M$ that are conformal to each other; that is, there exists an everywhere positive function $\Lambda$ on $M$ such that
    \begin{equation}
        \omega_2 = \Lambda\omega_1.
    \end{equation}  If $n \geq 2$, then $\Lambda \equiv \lambda$ for some constant $\lambda > 0$.
\end{pro}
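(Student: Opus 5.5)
The argument rests on the closedness of both Kähler forms. Since $\omega_1$ and $\omega_2=\Lambda\omega_1$ are closed, we get
$$
0=d\omega_2=d\Lambda\wedge\omega_1+\Lambda\, d\omega_1=d\Lambda\wedge\omega_1 .
$$
The key point is that when $n\geq 2$, the map $L\colon\alpha\mapsto\alpha\wedge\omega_1$ from $1$-forms to $3$-forms is injective, because $\omega_1$ is nondegenerate. This is the Lefschetz-type fact that wedging with a Kähler form is injective on $k$-forms for $k\le n-1$. Given injectivity, $d\Lambda=0$, so $\Lambda$ is locally constant. Since $M$ is connected (the standing convention of the paper), $\Lambda\equiv\lambda$ for a constant $\lambda>0$.

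One caveat concerns regularity. The identity $d(\Lambda\omega_1)=d\Lambda\wedge\omega_1$ needs $\Lambda$ to be $C^1$. In our setting $\Lambda$ is smooth automatically: it is the ratio $\omega_2(v,Jv)/\omega_1(v,Jv)$ for any nonzero vector $v$, locally taken to be a smooth nonvanishing vector field, so it inherits the smoothness of the metrics. I will note this at the outset.

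The only step that needs an argument is the injectivity claim, and it is pointwise linear algebra. At a point $p$, choose coordinates in which $\omega_1=\frac{i}{2}\sum_j dz_j\wedge d\bar z_j$; equivalently, choose a real basis with $\omega_1=\sum_{j=1}^n e_{2j-1}\wedge e_{2j}$. Write $\alpha=\sum_k a_k e_k$ and suppose some coefficient is nonzero, say $a_1\neq0$. Because $n\ge2$, there is a second block index $j=2$, and the coefficient of $e_1\wedge e_3\wedge e_4$ in $\alpha\wedge\omega_1$ equals $a_1$. No other term of $\alpha\wedge\omega_1$ contributes to this monomial, since the blocks $e_{2j-1}\wedge e_{2j}$ are disjoint. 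Hence $\alpha\wedge\omega_1\neq0$, and the same check works for any index $k$ by using a different block. Applying this at each point to $\alpha=d\Lambda$ gives $d\Lambda\equiv0$.

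This also explains why the hypothesis $n\geq2$ is needed: when $n=1$, every $3$-form vanishes and the argument gives nothing, which matches the remark in the paper that all Hermitian metrics on a Riemann surface are conformal.
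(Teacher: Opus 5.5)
Your proposal is correct and follows essentially the same route as the paper: closedness of both forms gives $d\Lambda\wedge\omega_1=0$, and pointwise linear algebra then forces $d\Lambda=0$ when $n\geq 2$. The only difference is how the injectivity of $\alpha\mapsto\alpha\wedge\omega_1$ is checked. You use a real symplectic normal form at a point, whereas the paper takes the type component $\partial\Lambda\wedge\omega_1=0$, writes it as $\Lambda_i g_{j\bar k}=\Lambda_j g_{i\bar k}$, and contracts with $g^{\bar k j}$ to obtain $(n-1)\Lambda_i=0$.
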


\begin{proof}
Since every K\"ahler metric is $C^{\infty}$, the hypotheses of the proposition imply that $\Lambda$ is $C^{\infty}$. Notice that
$$
d\omega_2 = d\Lambda \wedge \omega_1 + \Lambda d\omega_1 \quad \hbox{ on $M$}.
$$
Since the K\"ahler forms $\omega_1$ and $\omega_2$ are $d$-closed,
$$
d\Lambda \wedge \omega_1 = 0 \quad \hbox{ on $M$}.
$$
By comparing types, $\partial \Lambda \wedge \omega_1 = 0$ and $\overline{\partial}\Lambda \wedge \omega_1 = 0$.  Pick a local coordinate neighborhood and write without loss of generality that $\omega_1 = \sqrt{-1}\sum_{i, j = 1}^ng_{i\bar{j}}dz_i \wedge d\bar{z}_j$.  We will also use the notation $\Lambda_i = {\partial \Lambda \over \partial z_i}$.  Then $\partial \Lambda \wedge \omega_1 = 0$ yields that
$$
\Lambda_i g_{j\bar{k}} = \Lambda_jg_{i\bar{k}} \quad 1 \leq i, j \leq n, \quad 1 \leq k \leq n.
$$
Taking the trace using $g^{\bar{k}j}$ gives
$$
n\Lambda_i = \sum_{j}\Lambda_j\delta_i^j = \Lambda_i.
$$
Since $n \geq 2$, we conclude $\Lambda_i = 0$.  Since $\Lambda$ is real and everywhere positive,  $\Lambda$ is a positive constant in this coordinate neighborhood.  It follows that globally $\Lambda$ is a positive constant $\lambda > 0$.
\end{proof}

\begin{remark}
The proposition fails when $n=1$. Indeed, on a complex manifold of
dimension one, every Hermitian metric is K\"ahler. Hence, if $\omega$ is
a Hermitian metric and $\Lambda$ is any positive nonconstant smooth
function, then $\Lambda\omega$ is again a K\"ahler metric. Thus two
K\"ahler metrics may be conformally equivalent without differing by a
constant factor.
\end{remark}

\section{A new Calabi-type extension theorem}\label{sec: new calabi thm}

As mentioned in $\S$\ref{sec:introduction}, to overcome various obstacles
arising in the study of Question~\ref{MainQuestion} in its general setting,
and inspired in part by Mok's work \cite{Mo12}, we develop a new Calabi-type
extension theorem. This theorem is well adapted to the geometry of the
Bergman metric even when there is no a priori holomorphic isometric immersion
of $(\Omega,\lambda\omega_\Omega)$ into $\mathbb P^\infty$.
To state our Calabi-type extension theorem, we first introduce the
following definition. Recall that holomorphic maps into $\ell^2$ were defined in
$\S$\ref{sec: some auxiliary results}.


\begin{definition}
Let $U$ be an open connected subset of a complex manifold $M$, and let
$H\colon U\to\mathcal T$ be a holomorphic map, where
$\mathcal T=\mathbb C^N$, $N<\infty$, or $\mathcal T=\ell^2$. Write
$$
H=
\begin{cases}
(h_1,\ldots,h_N)\colon U\to\mathbb C^N, & \mathcal T=\mathbb C^N,\\
(h_1,h_2,\ldots)\colon U\to\ell^2, & \mathcal T=\ell^2.
\end{cases}
$$

\begin{enumerate}[label=(\arabic*)]
\item We say that $H$ admits a (possibly multi-valued) holomorphic extension to
$M$ if, for every continuous curve $\gamma\colon[0,1]\to M$ with
$\gamma(0)\in U$ and every $t\in[0,1]$, there exists a neighborhood
$U_t$ of $\gamma(t)$ such that every component $h_j$ admits an analytic
continuation along $\gamma|_{[0,t]}$ to a holomorphic function $(h_j)_t$
on $U_t$. In the case $\mathcal T=\ell^2$, we further require that
$\sum_{j=1}^{\infty}|(h_j)_t|^2$ converges locally uniformly on $U_t$.

\item We say that $H$ admits a semi-single-valued holomorphic extension to
$M$ if it admits a (possibly multi-valued) holomorphic extension to $M$ and,
for any two continuous curves $\gamma_1,\gamma_2$ in $M$ connecting a
point $p\in U$ to a point $q\in M$, the corresponding analytic
continuations $H_1$ and $H_2$ satisfy
$$
H_1=e^{i\theta}H_2
$$
in a neighborhood of $q$, for some $\theta\in\mathbb R$.

\item We further say that $H$ admits a single-valued holomorphic extension to $M$
if the analytic continuation of $H$ to every point $q\in M$ is independent
of the choice of curve along which it is continued.
\end{enumerate}
\end{definition}

Our Calabi-type extension theorem can now be stated as follows. As mentioned eailer, throughout the  paper, we interpret
$\mathbb P^{N-1}$ as $\mathbb P^\infty$ when $N=\infty$.

\begin{theorem}\label{Theorem 2 and 2+ in Ming's Notes}
Let $(M,\omega)$ be a K\"ahler manifold such that there exists an
everywhere positive real-analytic function $\rho$ on $M$ satisfying
$\omega=i\partial\overline{\partial}\log\rho$. Let $U$ be an open
connected subset of $M$, let $1 \leq N\leq\infty$, and let
$\{h_j\}_{j=1}^N$ be holomorphic functions on $U$ such that
$$
\sum_{j=1}^N |h_j|^2=\rho \quad\hbox{on }U,
$$
where, when $N=\infty$, the series converges locally uniformly on $U$.
Set
$$
H=
\begin{cases}
(h_1,\ldots,h_N)\colon U\to\mathbb C^N, & N<\infty,\\
(h_1,h_2,\ldots)\colon U\to\ell^2, & N=\infty.
\end{cases}
$$
Assume that there exists a constant $\mu>0$ such that $\rho^\mu$ admits
a sesqui-holomorphic extension to $M\times M$. That is, there exists a
function $K$ on $M\times M$, holomorphic in the first variable and
anti-holomorphic in the second variable, such that
$K(z,z)=\rho(z)^\mu$ on $M$.
Then the following statements hold.
\begin{enumerate}[label=(\arabic*)]
\item $H$ admits a semi-single-valued holomorphic extension to $M$.

\item Each $h_j$ admits a semi-single-valued
holomorphic extension to $M$, and $|h_j|^2$ admits a single-valued
real-analytic extension to $M$. These extensions still satisfy $\sum_{j=1}^N |h_j|^2=\rho$ on $M.$

\item Assume $N \geq 2.$ Then the map
$[H]\colon(M,\omega)\to (\mathbb P^{N-1},\omega_{FS})$ defined as follows is
a well-defined holomorphic isometric immersion: for $q\in M$, choose any
analytic continuation $\widetilde H$ of $H$ to a neighborhood of $q$ and
set $[H](q)=[\widetilde H(q)]$.
\end{enumerate}
\end{theorem}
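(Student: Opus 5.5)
Fix a base point $p\in U$ and a curve $\gamma\colon[0,1]\to M$ with $\gamma(0)=p$. The plan is to realize each $h_j$ as a coefficient of an expansion of the sesqui-holomorphic function $K(z,w)^{1/\mu}$, so that continuation along $\gamma$ reduces to continuing square roots of nonvanishing holomorphic functions.

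\textbf{Step 1: polarization.} Let $\Theta(z,\bar w)=\sum_j h_j(z)\overline{h_j(w)}$ on $U\times U$. This is the unique sesqui-holomorphic function on $U\times U$ with $\Theta(z,\bar z)=\rho(z)$. On $U$ we have $\rho^\mu=K(z,z)$, so by uniqueness of polarization near the diagonal, $K(z,w)=\Theta(z,\bar w)^\mu$ for $(z,w)$ near $\{(z,z):z\in U\}$. Here the power is taken with the branch that is positive on the diagonal, which is possible since $\Theta(z,\bar z)=\rho>0$. Since $K$ is globally sesqui-holomorphic on $M\times M$, the function $\Theta$ should be $K^{1/\mu}$ wherever a branch of $K^{1/\mu}$ is defined. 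However, $K$ may vanish off the diagonal, so I will not continue $\Theta$ globally.

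\textbf{Step 2: local representation near each point of $\gamma$.} Near each point $q$ of $\gamma$, choose a neighborhood $W_q$ on which $K(z,w)\neq 0$ for $z,w\in W_q$; this is possible since $K(q,q)=\rho(q)^\mu>0$. On $W_q\times W_q$, the function $\Theta_q:=K^{1/\mu}$ (with the branch positive on the diagonal) is sesqui-holomorphic and satisfies $\Theta_q(z,\bar z)=\rho(z)$.

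Now fix a point $a$ near $q$. Define
\[
g(z)=\frac{\Theta_q(z,\bar a)}{\sqrt{\rho(a)}} .
\]
Then $|g|^2=|\Theta_q(z,\bar a)|^2/\rho(a)$, and the function $\rho(z)-|g(z)|^2$ has the polarization
\[
\Theta_q(z,\bar w)-\frac{\Theta_q(z,\bar a)\Theta_q(a,\bar w)}{\rho(a)}.
\]
This is the standard Gram–Schmidt/Calabi construction: it produces the diastasis-type expansion
\[
\rho(z)=\sum_k |g_k(z)|^2
\]
with holomorphic $g_k$ on $W_q$, where the coefficients come from Taylor-expanding $\Theta_q$ in $\bar w$ about $a$. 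Positivity of the associated kernel is inherited from the $U$ end by analytic continuation of the identity $\Theta=\sum_j h_j\bar h_j$; this is the delicate point.

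\textbf{Step 3: continuation along $\gamma$.} I would rather argue as in Calabi. Near a point where $H$ is already defined on an open set $V$ and also $V\subset W_q$, we have $\Theta_q(z,\bar w)=\sum_j h_j(z)\overline{h_j(w)}$ on $V\times V$, by uniqueness of polarization. Choose $w_1,\dots$ in $V$ such that the vectors $H(w_k)$ span the closed span of $H(V)$ in $\ell^2$. Then each $h_j$ is a (limit of a) linear combination of the functions $z\mapsto\Theta_q(z,\bar w_k)$, and these are holomorphic on all of $W_q$. This gives the extension of $H$ to $W_q$. Locally uniform convergence of $\sum_j|h_j|^2$ follows because $\sum_j|h_j|^2=\Theta_q(z,\bar z)=\rho$ persists: it holds on $V$ and both sides are real-analytic. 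By compactness of $\gamma([0,1])$, finitely many such steps continue $H$ along $\gamma$. This proves the multi-valued extension, and shows $\sum_j|h_j|^2=\rho$ everywhere.

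\textbf{Step 4: semi-single-valuedness and (2), (3).} For two continuations $H_1,H_2$ near $q$, both polarize to a branch of $K^{1/\mu}$ on $W_q\times W_q$:
\[
\langle H_1(z),H_1(w)\rangle=\Theta_q(z,\bar w),\qquad \langle H_2(z),H_2(w)\rangle=e^{i\alpha}\Theta_q(z,\bar w),
\]
with $e^{i\alpha}$ a root of unity ambiguity coming from the branch of $K^{1/\mu}$. Setting $z=w$ forces $e^{i\alpha}=1$, since both sides equal $\rho>0$. Hence $\langle H_1(z),H_1(w)\rangle=\langle H_2(z),H_2(w)\rangle$ on $W_q\times W_q$.

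By Calabi's rigidity argument (Theorem~\ref{thm:Rigidity}), $H_2=TH_1$ for a unitary $T$ on the closed span. This alone is not enough; I need $T$ to be a scalar $e^{i\theta}$. To get this, continue $H_1$ and $H_2$ back to $U$ along the same reversed path. Then $T$ commutes with continuation, so $H_2=TH$ on $U$. But $H_2$ restricted to $U$ is the continuation of $H$ around a loop. The main obstacle is showing that monodromy around a loop acts on $H$ by a scalar, and not by a general unitary. The plan for this step is to use $|h_j|^2$: the cross term between $z\in U$ and the loop-continued value is governed by the single-valued $K$, which suggests that $\langle H_{\text{loop}}(z),H(w)\rangle=e^{i\theta}\langle H(z),H(w)\rangle$. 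I expect this is where the global sesqui-holomorphic extension $K$ on all of $M\times M$ (not just near the diagonal) is genuinely used.

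Given semi-single-valuedness, the three conclusions follow:
\begin{itemize}
\item each $h_j$ changes only by the phase $e^{i\theta}$, so $|h_j|^2$ is single-valued and real-analytic, and $\sum_j|h_j|^2=\rho$ holds on $M$, which is (2);
\item $[H]$ is well defined;
\item $i\partial\bar\partial\log\sum_j|h_j|^2=\omega$ shows $[H]$ is isometric, and nondegeneracy of $\omega$ makes it an immersion, which is (3).
\end{itemize}
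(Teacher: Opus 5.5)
Your proposal has a genuine gap at the step that carries the content of the theorem. In Step~4 you reduce monodromy around a loop to a unitary $T$ with $\widehat H=TH$ on $U$, but you never show that $T$ is a scalar; you only say you expect the global $K$ to force this. The missing idea is how to bring the \emph{off-diagonal} values of $K$ into play.

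Fix $p\in U$. Since $K(p,p)=\rho(p)^\mu>0$, the set $Z=\{z\in M: K(z,p)=0\}$ is a proper analytic subvariety. So the loop $\gamma$ can be moved off $Z$ by a homotopy rel endpoints, which does not change the continued germs. By compactness $K\neq0$ on a neighborhood of $\gamma([0,1])\times\{p\}$, and hence, after shrinking $U$, on a neighborhood of $\gamma([0,1])\times U$.

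Now freeze $w\in U$ in the polarized identity $\sum_j h_j(z)\overline{h_j(w)}=K(z,w)^{1/\mu}$ and continue it in $z$ alone along $\gamma$. The left side becomes $\sum_j\widehat h_j(z)\overline{h_j(w)}$, which converges by Cauchy--Schwarz. The right side becomes another branch of $K^{1/\mu}$, which still has modulus $|K(z,w)|^{1/\mu}$. Putting $w=z$ gives $|\langle\widehat H(z),H(z)\rangle|=\rho(z)=\|\widehat H(z)\|\,\|H(z)\|$, the equality case of Cauchy--Schwarz. Hence $\widehat H=\lambda H$ with $\lambda$ locally a quotient $\widehat h_j/h_j$, so $\lambda$ is holomorphic, and $|\lambda|\equiv1$ forces it to be constant.

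Your near-diagonal neighborhoods $W_q$, with $K\neq0$ on $W_q\times W_q$, cannot supply this. What is needed is nonvanishing along $\gamma\times\{p\}$, far from the diagonal, where $K$ can genuinely vanish; that is why the perturbation off $Z$ is essential. Your heuristic $\langle H_{\mathrm{loop}}(z),H(w)\rangle=e^{i\theta}\langle H(z),H(w)\rangle$ is in fact what this continuation produces, since two branches of $K^{1/\mu}$ on a connected set differ by a unimodular constant. But it has to be derived this way.

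Step~3 also has a gap. Write $h_j(z)=\langle H(z),P_Ee_j\rangle$, with $E$ the closed span of $H(V)$, and approximate $P_Ee_j$ by $\sum_kc_kH(w_k)$. This gives convergence of $\sum_k\bar c_k\Theta_q(\cdot,\bar w_k)$ on $V$ only. Convergence at $z\in W_q\setminus V$ would require $|\sum_k\bar c_k\Theta_q(z,\bar w_k)|^2\le\rho(z)\,\|\sum_kc_kH(w_k)\|^2$. That is positive semidefiniteness of $\Theta_q$ on $\{z\}\cup V$, which is exactly the ``delicate point'' you flagged in Step~2 and never proved. Similarly, for $N=\infty$, saying $\sum_j|h_j|^2=\rho$ ``persists by real-analyticity'' presupposes the very convergence in question.

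The paper avoids this by applying Calabi's extension theorem to $[H]$; this part uses no $K$ at all. On a simply connected neighborhood $W_T$ of $\gamma(T)$ it takes a representative $G_T$ and uses Calabi rigidity to get $G_T=\lambda H_{t^*}$ on $V_{t^*}$. Pluriharmonicity of the difference of logarithms gives $\sum_j|g_j|^2=|\nu|^2\rho$ on $W_T$ for a nowhere-vanishing holomorphic $\nu$. Comparing yields $|\lambda/\nu|=1$, so $G_T/\nu$ continues $H$ with $\sum_j|\cdot|^2=\rho$. You could cite Calabi's theorem the same way. Alternatively, prove the positivity directly: the Taylor coefficient matrix of the local polarization of $\rho$ at a point of $V$ is positive semidefinite, and you can work inside its polydisc of convergence, which is essentially Calabi's own argument.
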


To prove Theorem~\ref{Theorem 2 and 2+ in Ming's Notes}, we first establish
the following proposition.

\begin{pro}
\label{First half of Theorem 2 and 2+ in Ming's Notes}
Let $(M,\omega)$ be a K\"ahler manifold such that there exists an
everywhere positive real-analytic function $\rho$ on $M$ satisfying
$\omega=i\partial\overline{\partial}\log\rho$. Let $U$ be an open
connected subset of $M$, let $1 \leq N\leq\infty$, and let
$\{h_j\}_{j=1}^N$ be holomorphic functions on $U$ such that
\begin{equation}\label{Definitino of rho as a sum}
\sum_{j=1}^N |h_j|^2=\rho \quad\hbox{on }U,
\end{equation}
where, when $N=\infty$, the series converges locally uniformly on $U$.

\begin{enumerate}[label=(\arabic*)]
\item Let $\gamma_i\colon[0,1]\to M$, $i=1,2$, be continuous curves with
$\gamma_i(0)\in U$. Then, for every $t\in[0,1]$, there exist neighborhoods
$U_{t,i}$ of $\gamma_i(t)$ such that each $h_j$ admits an
analytic continuation along $\gamma_i|_{[0,t]}$ to a holomorphic function
$(h_j)_{t,i}$ on $U_{t,i}$. Moreover, the series
\begin{equation}\label{series continuation in both variables}
\sum_{j=1}^N (h_j)_{t,1}(z)\overline{(h_j)_{t,2}(w)},
\qquad (z,w)\in U_{t,1}\times U_{t,2},
\end{equation}
converges locally uniformly on $U_{t,1}\times U_{t,2}$ and defines a
function that is holomorphic in $z$ and anti-holomorphic in $w$. In
addition,
$$
\sum_{j=1}^N |(h_j)_{t,i}(z)|^2=\rho(z)
\quad\hbox{on }U_{t,i}, \qquad i=1,2.
$$

\item If $M$ is simply connected, then each $h_j$
extends to a global holomorphic function on $M$, which we continue to
denote by $h_j$. Moreover, $\sum_{j=1}^N |h_j|^2$ converges locally
uniformly on $M$ and
$$
\sum_{j=1}^N |h_j|^2=\rho \quad\hbox{on }M.
$$
\end{enumerate}
\end{pro}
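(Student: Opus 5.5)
The plan follows Calabi's classical argument via polarization of the real-analytic potential $\rho$. Since $\rho$ is real-analytic on $M$, near every point $q\in M$ it admits a polarization $\rho(z,\bar w)$ on a neighborhood $W_q\times W_q$ (in local coordinates), holomorphic in $z$ and anti-holomorphic in $w$, with $\rho(z,\bar z)=\rho(z)$. On $U$, the identity $\sum_j |h_j|^2=\rho$ polarizes to
\begin{equation*}
\sum_{j=1}^N h_j(z)\overline{h_j(w)}=\rho(z,\bar w)
\end{equation*}
near the diagonal. Here we use that the series converges locally uniformly, that $\sum_j|h_j(z)|^2$ bounds the series by Cauchy--Schwarz, and that both sides are sesqui-holomorphic and agree on the diagonal. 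The key local extension step is as follows. Suppose all $h_j$ are defined on a ball $B$ and $B'$ is a larger ball (in a chart) on which $\rho(z,\bar w)$ is defined for $z\in B'$, $w\in B$. Fix $w_0\in B$. The functions $z\mapsto \rho(z,\bar w)$, $w\in B$, span (densely) a space that determines the extension: the map $\overline{h(w)}=(\overline{h_j(w)})_j\in\ell^2$ ranges over a set whose closed span $S$ contains the component of every $H(z)$ after projection. Concretely, I choose finitely or countably many points $w_k\in B$ such that the vectors $\overline{H(w_k)}$ span a dense subspace of the closed span $S$ of $\{\overline{H(w)}:w\in B\}$. Then I solve for $H(z)$, $z\in B'$, through the inner products $\langle H(z),H(w_k)\rangle=\rho(z,\bar w_k)$. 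This is done by Gram--Schmidt: I orthonormalize $H(w_k)$ into $e_m=\sum_k c_{mk}H(w_k)$ and define $\tilde H(z)=\sum_m \big(\sum_k \bar c_{mk}\rho(z,\bar w_k)\big)\bar e_m$, suitably conjugated.

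Simpler and cleaner, I will phrase it as follows. Components of $H$ orthogonal to $S$ vanish on $B$ identically, since $H(z)\in S$ for $z\in B$ by definition, modulo conjugation conventions. The coordinates of $H(z)$ in the basis $e_m$ are $\langle H(z),e_m\rangle$, which are finite combinations of $\rho(z,\bar w_k)$ and hence holomorphic on $B'$. One must check that $\sum_m|\langle\tilde H(z),e_m\rangle|^2$ converges locally uniformly on $B'$ and equals $\rho(z)$ there. Convergence follows because the Gram--Schmidt partial sums, $\sum_{m\le L}|\cdot|^2$, are Hermitian forms evaluated via $\rho(\cdot,\cdot)$. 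Their limit is identified using the identity $\sum_m \tilde H_m(z)\overline{\tilde H_m(w)}=\rho(z,\bar w)$, valid for $w\in B$ and continued by sesqui-holomorphy in $(z,\bar w)$ to $B'\times B'$. Setting $z=w$ gives $\rho(z)$, and monotone convergence of positive partial sums to a continuous limit gives local uniform convergence by Dini. The $h_j$ themselves are then recovered as $h_j(z)=\sum_m \tilde H_m(z)\langle e_m, \epsilon_j\rangle$, which extends holomorphically with $\sum_j|h_j|^2=\rho$ on $B'$.

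With the local step in hand, continuation along a curve $\gamma$ proceeds by the usual compactness argument. I cover $\gamma([0,1])$ by finitely many chart balls on which $\rho$ polarizes with uniform radius, and extend step by step. The set of $t$ for which continuation exists is open by construction and closed by the uniform radius. For the cross series in (1), with continuations along $\gamma_1,\gamma_2$, the series $\sum_j (h_j)_{t,1}(z)\overline{(h_j)_{t,2}(w)}$ converges locally uniformly by Cauchy--Schwarz, using $\sum_j|(h_j)_{t,i}|^2=\rho$ and the continuity of $\rho$. Each term is holomorphic in $z$ and anti-holomorphic in $w$, so the local uniform limit is sesqui-holomorphic.

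Part (2) then follows from the monodromy theorem. On a simply connected $M$, continuation along homotopic curves agrees, since the local extension is unique by the identity theorem, so each $h_j$ becomes single-valued. Local uniform convergence and the identity $\sum_j|h_j|^2=\rho$ hold because they hold in each local step. The main obstacle is the local extension step in the case $N=\infty$, namely controlling convergence in $\ell^2$ of the extended components. I would handle it as above, by reducing to the polarized identity and applying Dini.
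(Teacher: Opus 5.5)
There is a genuine gap, at exactly the point you flag as the main obstacle. Your overall architecture is sound: a local extension step on polarization neighborhoods of uniform size along $\gamma$, an open--closed argument in $t$, Cauchy--Schwarz for the cross series, and monodromy for part (2). If completed, it would amount to reproving Calabi's extension theorem from scratch. The paper instead quotes Calabi's extension and rigidity theorems for the projective map $[H]$. It then only has to remove the scalar ambiguity, by writing $\sum_j|g_j|^2=|\mu|^2\rho$ on a simply connected neighborhood and dividing $G_T$ by $\mu$.

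The gap is in your local step: for $z\in B'\setminus B$ nothing controls $S_L(z)=\sum_{m\le L}|\tilde H_m(z)|^2$. In fact $S_L(z)=c(z)G_L^{-1}c(z)^*$, where $c(z)=(\rho(z,\bar w_k))_k$ and $G_L=(\rho(w_k,\bar w_l))_{k,l}$ is the Gram matrix of the independent vectors $H(w_k)$. By a Schur-complement argument, the bound $S_L(z)\le\rho(z)$ is therefore equivalent to positive semidefiniteness of the kernel $\rho(x,\bar y)$ on the finite set $\{w_1,\dots,w_L,z\}$. On $B$ this is automatic, because $\rho(z,\bar w)=\langle H(z),H(w)\rangle$ there. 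At points outside $B$ it is precisely what must be proved.

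Your justification is circular. The identity $\sum_m\tilde H_m(z)\overline{\tilde H_m(w)}=\rho(z,\bar w)$ with $z\in B'$ is only known for $w$ in the countable set $\{w_k\}$, where the sum terminates. ``Continuing it by sesqui-holomorphy to $B'\times B'$'' presupposes that the left side already converges to a sesqui-holomorphic function there, which is the claim itself. Likewise, Dini's theorem needs an a priori bounded monotone sequence with a known continuous limit, and you have neither.

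The missing idea is Calabi's positivity of the Taylor coefficients of the polarized potential. Once you use it, the Gram--Schmidt machinery is unnecessary:
\begin{itemize}
\item Let $B'$ be a polydisc centered at a point $p\in B$ on which $\rho(z,\bar w)=\sum_{\alpha,\beta}a_{\alpha\beta}(z-p)^\alpha\overline{(w-p)^\beta}$ converges absolutely. Write $h_j(z)=\sum_\alpha b_{j\alpha}(z-p)^\alpha$ near $p$.
\item Differentiating the locally uniformly convergent polarized identity at $(p,p)$ gives $a_{\alpha\beta}=\sum_j b_{j\alpha}\overline{b_{j\beta}}$.
\item Hence $\sum_{j,\alpha}|b_{j\alpha}|^2s^{2\alpha}=\sum_\alpha a_{\alpha\alpha}s^{2\alpha}<\infty$ for every polyradius $s$ of a polydisc centered at $p$ and compactly contained in $B'$.
\item Cauchy--Schwarz then shows that every Taylor series $\sum_\alpha b_{j\alpha}(z-p)^\alpha$ converges on $B'$, and that $\sum_j|h_j|^2$ converges locally uniformly there.
\item So $\sum_j h_j(z)\overline{h_j(w)}$ is sesqui-holomorphic on $B'\times B'$, and it equals $\rho(z,\bar w)$ by the identity theorem.
\end{itemize}
With this local step, and polydiscs of uniform size along $\gamma$, your continuation and monodromy arguments go through.
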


\begin{proof}
When $N<\infty$, by appending a zero component to
$(h_1,\ldots,h_N)$, we may assume that $N\geq2$.
We prove parts~\textup{(1)} and~\textup{(2)} separately.

\medskip
\noindent{\bf Proof of (1):}
We first prove the analytic continuation statement along a single
continuous curve $\gamma\colon[0,1]\to M$ with $\gamma(0)\in U$.

For $0<t_0\leq1$, we say that $H=(h_1,\ldots,h_N)$, or
$H=(h_1,h_2,\ldots)$ when $N=\infty$, admits the desired analytic
continuation along $\gamma|_{[0,t_0]}$ if, for every $0\leq t\leq t_0$,
there exists a neighborhood $V_t$ of $\gamma(t)$ such that every $h_j$
admits analytic continuation along $\gamma|_{[0,t]}$ to a holomorphic
function $(h_j)_t$ on $V_t$, and
$$
\sum_{j=1}^N |(h_j)_t|^2=\rho \quad\hbox{on }V_t,
$$
where, when $N=\infty$, the series converges locally uniformly on $V_t$.
In particular, since $\rho>0$, the functions $(h_j)_t$ have no common
zeros on $V_t$. Let
$$
S=\{t_0\in[0,1]: H \hbox{ admits the desired analytic continuation
along } \gamma|_{[0,t_0]}\}
$$
and set $T=\sup S$. Since $\gamma(0)\in U$ and $U$ is open, $S$ contains
an interval $[0,\varepsilon]$ for some $\varepsilon>0$. Hence $T>0$.
By the definition of $T$, $H$ admits the desired analytic continuation
along $\gamma|_{[0,T)}$.

For $0\leq t<T$, write
$$
H_t=
\begin{cases}
((h_1)_t,\ldots,(h_N)_t), & N<\infty,\\
((h_1)_t,(h_2)_t,\ldots), & N=\infty.
\end{cases}
$$
Then $[H_t]$ is a holomorphic isometric immersion of $(V_t,\omega)$ into $(\mathbb P^{N-1},\omega_{FS})$.

On the other hand, by Calabi's extension theorem
(Theorem~\ref{Calabi Extension Theorem}), the initial projective map
$[H]$ admits an analytic continuation along $\gamma$ as a holomorphic
isometric immersion into $(\mathbb P^{N-1},\omega_{FS})$.
Let $[G_T]$ be such a continuation on a neighborhood $W_T$ of
$\gamma(T)$, which we may choose to be
simply connected. 
Choose a holomorphic representation
$$
G_T=
\begin{cases}
(g_1,\ldots,g_N), & N<\infty,\\
(g_1,g_2,\ldots), & N=\infty,
\end{cases}
$$
of $[G_T]$ on $W_T$. When $N=\infty$, the series
$\sum_{j=1}^{\infty}|g_j|^2$ converges locally uniformly on $W_T$.

Choose $t^*<T$ sufficiently close to $T$ so that
$\gamma(t^*)\in W_T$, and shrink $V_{t^*}$ if necessary so that
$V_{t^*}\subseteq W_T$. The maps $[H_{t^*}]$ and $[G_T]$ are both
holomorphic isometric immersions on $V_{t^*}$. By Calabi's rigidity
theorem (Theorem \ref{thm:Rigidity}), after composing $[G_T]$ with a suitable projective unitary
transformation and replacing its representation accordingly, we may
assume that
$$
[H_{t^*}]=[G_T]\quad\hbox{on }V_{t^*}.
$$
It follows that there exists a nowhere-vanishing holomorphic function
$\lambda$ on $V_{t^*}$ such that
$$
G_T=\lambda H_{t^*}\quad\hbox{on }V_{t^*}.
$$
Hence
\begin{equation}\label{eq:lambda-rho}
\sum_{j=1}^N|g_j|^2
=
|\lambda|^2\sum_{j=1}^N|(h_j)_{t^*}|^2
=
|\lambda|^2\rho
\quad\hbox{on }V_{t^*}.
\end{equation}
Since $[G_T]$ is a holomorphic isometric immersion,
$$
i\partial\overline{\partial}
\log\sum_{j=1}^N|g_j|^2
=
\omega
=
i\partial\overline{\partial}\log\rho
\quad\hbox{on }W_T.
$$
Thus
$$
\log\sum_{j=1}^N|g_j|^2-\log\rho
$$
is pluriharmonic on the simply connected domain $W_T$. Therefore, there
exists a nowhere-vanishing holomorphic function $\mu$ on $W_T$ such that
\begin{equation}\label{eq:mu-rho}
\sum_{j=1}^N|g_j|^2=|\mu|^2\rho
\quad\hbox{on }W_T.
\end{equation}
Comparing \eqref{eq:lambda-rho} and \eqref{eq:mu-rho}, we obtain
$|\lambda/\mu|\equiv1$ on $V_{t^*}$. Hence
$\lambda=e^{i\theta}\mu$ on $V_{t^*}$ for some $\theta\in\mathbb R$.
Replacing $\mu$ by $e^{i\theta}\mu$, we may assume that
$\lambda=\mu$ on $V_{t^*}$. It follows that
$$
H_{t^*}=\frac{1}{\mu}G_T\quad\hbox{on }V_{t^*}.
$$
Since the right-hand side is holomorphic on all of $W_T$, it gives an
analytic continuation of all of the components of $H$ to the common
neighborhood $W_T$ of $\gamma(T)$. Moreover, by
\eqref{eq:mu-rho},
$$
\sum_{j=1}^N\left|\frac{g_j}{\mu}\right|^2
=
\rho
\quad\hbox{on }W_T,
$$
with locally uniform convergence when $N=\infty$.
Thus the desired continuation of $H$ exists across $\gamma(T)$. If $T<1$, then
by continuity of $\gamma$, there exists some small $\delta>0$ such that
$\gamma([T,T+\delta])\subseteq W_T$. It follows that
$T+\delta\in S$,  contradicting
the definition of $T$. Therefore $T=1$. The same argument at $T=1$
gives the desired continuation at $\gamma(1)$ as well.

Applying this argument separately to $\gamma_1$ and $\gamma_2$, for every
$t\in[0,1]$ we obtain neighborhoods $U_{t,i}$ of $\gamma_i(t)$ and
analytic continuations $(h_j)_{t,i}$, $i=1,2$, satisfying
$$
\sum_{j=1}^N |(h_j)_{t,i}|^2=\rho
\quad\hbox{on }U_{t,i},
$$
with locally uniform convergence when $N=\infty$.
It remains to consider the series in
\eqref{series continuation in both variables}. When $N<\infty$, there is
nothing to prove. Suppose that $N=\infty$. Let
$K_i\Subset U_{t,i}$, $i=1,2$, be compact subsets. By the
Cauchy--Schwarz inequality,
$$
\sup_{(z,w)\in K_1\times K_2}
\left|
\sum_{j=m}^n
(h_j)_{t,1}(z)\overline{(h_j)_{t,2}(w)}
\right|
\leq
\left(
\sup_{z\in K_1}\sum_{j=m}^n |(h_j)_{t,1}(z)|^2
\right)^{1/2}
\left(
\sup_{w\in K_2}\sum_{j=m}^n |(h_j)_{t,2}(w)|^2
\right)^{1/2}.
$$
The right-hand side tends to zero as $m,n\to\infty$. Hence
\eqref{series continuation in both variables} converges locally uniformly
on $U_{t,1}\times U_{t,2}$. Its sum is holomorphic in $z$ and
anti-holomorphic in $w$. 
This proves part (1).

\medskip
\medskip
\noindent{\bf Proof of (2):}
By part~1, each $h_j$ admits an analytic continuation along every curve in
$M$. Since $M$ is simply connected, the monodromy theorem implies that
each $h_j$ extends to a global holomorphic function on $M$. The identity
$$
\sum_{j=1}^N |h_j|^2=\rho
$$
and the local uniform convergence follow from part~(1). Hence the series
converges uniformly on compact subsets of $M$.
\end{proof}

We are ready to prove Theorem~\ref{Theorem 2 and 2+ in Ming's Notes}.


\begin{proof}[Proof of Theorem~\ref{Theorem 2 and 2+ in Ming's Notes}]
By part (1) of Proposition~\ref{First half of Theorem 2 and 2+ in Ming's Notes},
$H$ admits a (possibly multi-valued) holomorphic extension along every
continuous curve in $M$. We first show that this extension is
semi-single-valued.
Let $K$ be the function on $M \times M$ given in the assumption.
That is, $K$ is holomorphic in the first variable and anti-holomorphic in the second variable, and
$K(z,z)=\rho(z)^\mu$. Fix $p\in U$. After shrinking $U$ around $p$ if
necessary, we may assume that $U$ is simply connected and that $K$ does
not vanish on $U\times U$. Choose a branch of $K^{1/\mu}$ on
$U\times U$ such that $K(z,z)^{1/\mu}=\rho(z)$. By complexifying the
identity $\sum_{j=1}^N|h_j|^2=\rho$, we obtain
\begin{equation}\label{complexification-rho}
\sum_{j=1}^N h_j(z)\overline{h_j(w)}
=
K(z,w)^{1/\mu},
\qquad (z,w)\in U\times U.
\end{equation}
When $N=\infty$, the series on the left converges locally uniformly on
$U\times U$ by the Cauchy--Schwarz inequality.

Let $\gamma\colon[0,1]\to M$ be a loop based at $p$.
By part (1) of
Proposition~\ref{First half of Theorem 2 and 2+ in Ming's Notes}, each $h_j$ admits an analytic continuation along $\gamma$ to a germ of holomorphic function
$\widehat h_j$ at $p$. Moreover,  after
shrinking $U$ if necessary, we can assume all of the $\widehat h_j$ are holomorphic
on $U$ and
\begin{equation}\label{norm-hat-h}
\sum_{j=1}^N|\widehat h_j|^2=\rho
\qquad\hbox{on }U,
\end{equation}
with locally uniform convergence when $N=\infty$.
Consider
$$
V=\{z\in M:K(z,p)=0\}.
$$
Since $K(p,p)=\rho(p)^\mu>0$, $V$ is a proper complex analytic
subvariety of $M$. After a small perturbation of $\gamma$, fixed near
its endpoints, we may assume that
$\gamma([0,1])\cap V=\varnothing$. Such a perturbation may be chosen
homotopic to the original loop relative to its endpoints and therefore
does not change the germs $\widehat h_j$ obtained by analytic
continuation.

Consider now the loop
$$
\alpha(t)=(\gamma(t),p), \qquad 0\leq t\leq1,
$$
in $M\times M$. Since $K$ does not vanish along $\alpha$ and
$\alpha([0,1])$ is compact, we can choose a neighborhood $\mathcal N$
of $\alpha([0,1])$ in $M\times M$ such that
$K(z,w)\neq0$ for all $(z,w)\in\mathcal N$. Consequently, the chosen branch $K^{1/\mu}$ in
\eqref{complexification-rho} admits continuation along $\alpha$ through
local branches on neighborhoods in $\mathcal N$, each holomorphic in
the first variable and anti-holomorphic in the second. Denote the resulting local branch near $(p,p)$ by
$\widetilde K^{1/\mu}$. Note $\widetilde K^{1/\mu}$ is again a local
branch of the $1/\mu$-power of $K(z,w)$ near $(p,p)$, holomorphic in
$z$ and anti-holomorphic in $w$.

By compactness of $\gamma([0,1])$, after shrinking $U$ around $p$ if
necessary, we may further assume that
$$
\gamma([0,1])\times U\subseteq\mathcal N.
$$
Fix $w\in U$. Regarding \eqref{complexification-rho} as an identity of
holomorphic functions in $z$, we continue both sides analytically along
$\gamma$. On the left, each $h_j$ is continued to $\widehat h_j$, while
$\overline{h_j(w)}$ remains fixed. On the right, the chosen branch of
$K(z,w)^{1/\mu}$ is continued along $\gamma$, yielding the corresponding
slice of $\widetilde K^{1/\mu}$. By
part~\textup{(1)} of
Proposition~\ref{First half of Theorem 2 and 2+ in Ming's Notes}, the resulting series converges locally
uniformly. Since $w\in U$ was arbitrary, after shrinking $U$ if
necessary, we obtain
\begin{equation}\label{complexification-after-loop}
\sum_{j=1}^N \widehat h_j(z)\overline{h_j(w)}
=
\widetilde K^{1/\mu}(z,w),
\qquad (z,w)\in U\times U.
\end{equation}
Note that every local branch of the $1/\mu$-power of $K(z,w)$ has
modulus $|K(z,w)|^{1/\mu}$.
Setting $w=z$ in \eqref{complexification-rho} and
\eqref{complexification-after-loop}, and using
\eqref{norm-hat-h}, we therefore obtain on $U$
\begin{equation}\label{normequal-h}
\sum_{j=1}^N|h_j(z)|^2
=
\left|
\sum_{j=1}^N \widehat h_j(z)\overline{h_j(z)}
\right|
=
\sum_{j=1}^N|\widehat h_j(z)|^2
=
\rho(z).
\end{equation}
Thus equality holds in the Cauchy--Schwarz inequality. Writing
$$
H=(h_1,\ldots,h_N),\qquad
\widehat H=(\widehat h_1,\ldots,\widehat h_N)
$$
when $N<\infty$, and using the analogous notation in $\ell^2$ when
$N=\infty$, we conclude that $H(z)$ and $\widehat H(z)$ are
proportional for every $z\in U$. Since $\rho>0$, $H$ is nowhere
vanishing. Hence there exists a holomorphic function $\lambda$ on $U$
such that
$$
\widehat H=\lambda H.
$$
The equality of the two norms in \eqref{normequal-h} gives
$|\lambda|\equiv1$. Therefore $\lambda$ is constant, and
$\widehat H=e^{i\theta}H$ for some $\theta\in\mathbb R$.
We have thus shown that an analytic continuation of $H$ around every loop
based at $p$ changes $H$ only by multiplication by a constant of modulus
one. It follows, by applying this to the loop obtained from any two
curves with the same initial and terminal points, that any two analytic
continuations $H_1$ and $H_2$ of $H$ to the same point satisfy
$
H_1=e^{i\theta}H_2
$
in a neighborhood of that point, for some $\theta\in\mathbb R$. This
proves part (1).

Part (2) follows immediately. Indeed, each component $h_j$ admits a
semi-single-valued holomorphic extension, and any two branches differ by
the same constant factor $e^{i\theta}$. Hence their squared moduli agree.
Thus the local functions $|h_j|^2$ obtained from the different branches
glue together to give a single-valued real-analytic function on $M$. It follows from the identity principle that $\sum_{j=1}^N |h_j|^2=\rho$ on $M.$

Finally, we prove part (3). By
Proposition~\ref{First half of Theorem 2 and 2+ in Ming's Notes}, every
local branch $\widetilde H$ satisfies
$$
\sum_{j=1}^N|\widetilde h_j|^2=\rho.
$$
Since $\rho>0$, $\widetilde H$ has no common zero, and hence
$[\widetilde H]$ is well-defined. By part 1, two different branches of $H$
differ by multiplication by a constant $e^{i\theta}$, so they determine
the same projective map. Therefore
$[H]\colon M\to\mathbb P^{N-1}$
is a well-defined holomorphic map. Moreover, locally,
$$
i\partial\overline{\partial}
\log\sum_{j=1}^N|\widetilde h_j|^2
=
i\partial\overline{\partial}\log\rho
=
\omega.
$$
Hence $[H]\colon(M,\omega)\to(\mathbb P^{N-1},\omega_{FS})$ is a
holomorphic isometric immersion. This proves part 3 and completes the proof.
\end{proof}

As an application of Theorem~\ref{Theorem 2 and 2+ in Ming's Notes}, we
prove the following proposition concerning semi-single-valued extensions.

\begin{pro}\label{f over varphi is semi-single-valued}
Let $ D\subseteq \Omega$ be domains in $\mathbb C^n.$
Suppose that there exist an open connected subset $O\subseteq D$ and a
nowhere-vanishing holomorphic function $\varphi$ on $O$ such that
\begin{equation}\label{Metric equality up to a constant - 2}
K_D=|\varphi|^2K_\Omega^\lambda \quad\hbox{on }O.
\end{equation}
Then, for every $f\in A^2(D)$, the function
$\frac{f}{\varphi}$ admits a semi-single-valued holomorphic extension to
$\Omega$, and $\left|\frac{f}{\varphi}\right|^2$ admits a single-valued
real-analytic extension to $\Omega$. If $\Omega$ is simply connected,
then the extension of $\frac{f}{\varphi}$ is single-valued.
\end{pro}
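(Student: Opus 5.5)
The plan is to apply Theorem~\ref{Theorem 2 and 2+ in Ming's Notes} with $M=\Omega$, $\omega=\lambda\omega_\Omega$, $\rho=K_\Omega^\lambda$ and $\mu=1/\lambda$. First I check the hypotheses. Since $\Omega$ is a bounded domain, $K_\Omega(z,z)>0$ everywhere, so $\rho=K_\Omega(z,z)^\lambda$ is an everywhere positive real-analytic function on $\Omega$. Moreover $i\partial\overline{\partial}\log\rho=\lambda\omega_\Omega$ is a K\"ahler form. Finally, $\rho^{1/\lambda}=K_\Omega(z,z)$ extends sesqui-holomorphically to $\Omega\times\Omega$ via $K_\Omega(z,w)$. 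So the standing hypotheses on $(M,\omega,\rho)$ hold with the constant $\mu=1/\lambda$.

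Next I produce the functions $h_j$ on an open connected set. Let $\{f_j\}$ be an orthonormal basis of $A^2(D)$ with $f_1=f/\|f\|$; the case $f=0$ is trivial. Then $K_D(z,z)=\sum_j|f_j(z)|^2$, with locally uniform convergence on $D$. Set $h_j=f_j/\varphi$ on $O$, which is holomorphic since $\varphi$ is nowhere vanishing. Dividing \eqref{Metric equality up to a constant - 2} by $|\varphi|^2$ gives
\[
\sum_j|h_j|^2=K_\Omega^\lambda=\rho \quad\text{on } O,
\]
again with locally uniform convergence on $O$. Now Theorem~\ref{Theorem 2 and 2+ in Ming's Notes}(2), applied with $U=O$, shows that each $h_j$, and in particular $h_1=f/(\|f\|\varphi)$, admits a semi-single-valued holomorphic extension to $\Omega$, and that $|h_j|^2$ admits a single-valued real-analytic extension. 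Multiplying by the constant $\|f\|$ gives the first two assertions for $f/\varphi$.

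For the simply connected case I would invoke Proposition~\ref{First half of Theorem 2 and 2+ in Ming's Notes}(2) directly. Each $h_j$ continues along every curve in $\Omega$, and the monodromy theorem then gives a global single-valued holomorphic function. Hence $f/\varphi$ extends single-valuedly.

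The only step requiring care is the first one, namely that the hypotheses of the Calabi-type theorem are really satisfied: that $\rho$ is positive and real-analytic, and that $\rho^\mu$ complexifies globally. Taking $\mu=1/\lambda$ makes the latter automatic, because the Bergman kernel $K_\Omega(z,w)$ itself is the required sesqui-holomorphic extension. I expect no genuine obstacle beyond this bookkeeping, together with ensuring that the basis includes $f$, or, equivalently, noting that finite linear combinations of semi-single-valued extensions $h_j$ obtained along the same curves remain semi-single-valued, since all branches change by the common factor $e^{i\theta}$.
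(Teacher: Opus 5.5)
Your proposal is correct and follows the paper's proof essentially line for line. The paper likewise normalizes $f$ and takes it as the first element of an orthonormal basis of $A^2(D)$. It then divides by $\varphi$ on $O$ and applies part (2) of the Calabi-type extension theorem to $(\Omega,\lambda\omega_\Omega)$ with $\rho=K_\Omega^\lambda$ and $\mu=1/\lambda$. For the simply connected case it uses part (2) of the preceding proposition (the monodromy argument).

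The only minor point is that boundedness of $\Omega$ is not in the statement itself. You need it so that $K_\Omega>0$ and $\lambda\omega_\Omega$ is K\"ahler. It comes from the paper's standing assumption on $\Omega$, which the paper's proof also uses implicitly.
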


\begin{proof}
The conclusion is trivial if $f\equiv0$, so assume $f\not\equiv0$ (and hence the Bergman space of $D$ has dimension $N \geq 1$). After scaling, we may assume that $\|f\|_{L^2(D)}=1$. Choose an orthonormal basis $\{\phi_k\}_{k=1}^N$ of $A^2(D)$ with $\phi_1=f$, and set $\widetilde\phi_k:=\frac{\phi_k}{\varphi}$ on $O$. Since $K_D(z,z)=\sum_{k=1}^N|\phi_k(z)|^2$, it follows from \eqref{Metric equality up to a constant - 2} that
$$
\sum_{k=1}^N|\widetilde\phi_k(z)|^2=K_\Omega^\lambda(z,z)\quad\hbox{on }O.
$$
We apply Theorem~\ref{Theorem 2 and 2+ in Ming's Notes} to $(\Omega,\lambda\omega_\Omega)$ with $\rho=K_\Omega^\lambda$ and $\mu=\frac{1}{\lambda}$. Indeed, $\lambda\omega_\Omega=i\partial\overline{\partial}\log\rho$, while $\rho^\mu=K_\Omega$, and the Bergman kernel $K_\Omega(z,w)$ gives a globally defined extension of $\rho^\mu$ to $\Omega\times\Omega$ that is holomorphic in the first variable and anti-holomorphic in the second. Thus, by part~(2) of Theorem~\ref{Theorem 2 and 2+ in Ming's Notes}, $\widetilde\phi_1=\frac{f}{\varphi}$ admits a semi-single-valued holomorphic extension to $\Omega$, and $\left|\frac{f}{\varphi}\right|^2$ admits a single-valued real-analytic extension. If $\Omega$ is simply connected, then part~(2) of Proposition~\ref{First half of Theorem 2 and 2+ in Ming's Notes} shows that the extension of $\frac{f}{\varphi}$ is single-valued.
\end{proof}

\section{Proof of Theorem \ref{The main Stein result}}\label{Section Theorem 1 proved}

To prepare for the proof of Theorem~\ref{The main Stein result}, we first establish the following two lemmas.

\begin{lem}\label{Proposition that shows that Omega minus D is a closed pluripolar subset}
    Let $D \subseteq \Omega$ be domains in $\mathbb{C}^n$ such that $D$ is bounded and pseudoconvex, and $\Omega$ admits a Bergman metric.  If there is a $\lambda > 0$ such that
    \begin{equation}\label{Metric equality up to a constant}
    i\partial\overline{\partial}\log K_D = \lambda i\partial\overline{\partial}\log K_{\Omega} \quad\hbox{on }D,
    \end{equation}
    then $\Omega \setminus D$ is a closed pluripolar subset of $\Omega$. Consequently, $\lambda=1.$
\end{lem}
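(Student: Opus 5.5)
The plan is to use the Calabi-type extension theorem, in the form of Proposition~\ref{f over varphi is semi-single-valued}, to continue the $L^2$-holomorphic functions of $D$ across $\Omega\setminus D$. The pseudoconvexity of $D$ is then used to force the boundary $\partial D\cap\Omega$ to be thin.

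\textbf{Step 1 (local normalization).} By \eqref{Metric equality up to a constant}, the function $u:=\log K_D-\lambda\log K_\Omega$ is pluriharmonic on $D$. On a small ball $O\Subset D$ we write $u=\log|\varphi|^2$ with $\varphi$ holomorphic and nowhere vanishing, so that $K_D=|\varphi|^2K_\Omega^\lambda$ on $O$. Proposition~\ref{f over varphi is semi-single-valued} then applies. For an orthonormal basis $\{f_k\}$ of $A^2(D)$, the functions $h_k:=f_k/\varphi$ continue, semi-single-valuedly and with one common phase ambiguity, along every curve in $\Omega$. These continuations satisfy $\sum_k|h_k|^2=K_\Omega^\lambda$ on all of $\Omega$.

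\textbf{Step 2 (continuation across $\partial D$).} Fix $q\in\partial D\cap\Omega$ and a small ball $B\subseteq\Omega$ around $q$. Continuing along a curve in $D$ into $B$, and then along curves inside $D\cap B$, we get on each component of $D\cap B$ the identity $f_k=\varphi\, h_k$, where $\varphi$ is now the continuation of $\varphi$ inside $D$ and the $h_k$ are holomorphic on all of $B$. Because the phase ambiguity is the same for all $k$, the quotients $f_k/f_1=h_k/h_1$ extend single-valuedly and meromorphically to $B$. Choose $h_1$ with $h_1(q)\neq0$, which is possible since $\sum|h_k|^2>0$. Then $\varphi=f_1/h_1$, and every $f\in A^2(D)$ can be written as $f=f_1\cdot(h_f/h_1)$ with $h_f/h_1$ holomorphic near $q$.

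The key consequence is the identity
\[
K_D=|f_1|^2\,\frac{K_\Omega^\lambda}{|h_1|^2}\quad\text{near } q \text{ in } D,
\]
in which the factor $K_\Omega^\lambda/|h_1|^2$ extends real-analytically and positively across $q$. Hence near $q$ the behaviour of $K_D$ at $\partial D$ is controlled by the single function $|f_1|^2$, and all ratios $f/f_1$ extend holomorphically across $q$.

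\textbf{Step 3 (using pseudoconvexity; the main obstacle).} A bounded pseudoconvex domain is an $L^2$-domain of holomorphy (Pflug--Zwonek). So there is $f\in A^2(D)$ that does not extend holomorphically near any boundary point, and near any boundary point where $D$ lies on more than one side, $K_D$ and the quotients cannot behave as in Step 2. I will try to combine this with Step 2 as follows: wherever $f_1$ extends near $q$, every $f\in A^2(D)$ extends as well. This forces $\partial D\cap\Omega$ to lie locally in the polar set of the $f_1$-extension, i.e.\ in a set of the form $\{\psi=-\infty\}$, where $\psi=\log|f_1|^2$ is continued as a plurisubharmonic function via $\log K_D-\log(K_\Omega^\lambda/|h_1|^2)$. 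Showing that this $\psi$ is plurisubharmonic on $B$ and not identically $-\infty$ would then make $\Omega\setminus D$ locally, hence globally (Josefson), pluripolar; closedness is automatic.

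I expect this step to be the main difficulty. What has to be ruled out is that $\Omega\setminus D$ has interior, or is a real hypersurface piece, while $K_D$ is still compatible with Step 2. This is exactly where the $L^2$-domain-of-holomorphy property must be used.

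\textbf{Step 4 ($\lambda=1$).} Closed pluripolar sets are Bergman-negligible, so $K_D=K_\Omega$ on $D$, and therefore $\omega_D=\omega_\Omega$. Comparing this with $\omega_D=\lambda\omega_\Omega$ and using $\omega_\Omega>0$ gives $\lambda=1$.
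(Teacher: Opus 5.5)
Your Steps 1, 2 and 4 are sound. The gap is in Step 3, which carries the whole content of the lemma, and the tool you invoke there is false. A bounded pseudoconvex domain need not be an $L^2$-domain of holomorphy. Take $D=\mathbb{B}^2\setminus\{z_1=0\}\subseteq\Omega=\mathbb{B}^2$. It is bounded and pseudoconvex, and it satisfies the hypothesis with $\lambda=1$ because $K_D=K_\Omega$. Yet every $f\in A^2(D)$ extends across $\{z_1=0\}$.

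The claim would in fact prove too much. At any $q\in\partial D\cap\Omega$ where the continued branch of $1/\varphi$ does not vanish, your Step 2 shows that every $f\in A^2(D)$ extends holomorphically across $q$. So you would conclude $\partial D\cap\Omega\subseteq\{1/\varphi=0\}$. In the example $\varphi$ is a unimodular constant, while $\partial D\cap\Omega\neq\emptyset$.

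The fallback you sketch is not an argument either. Extending $\log|f_1|^2$ plurisubharmonically to $B$ ``via $\log K_D$'' is impossible as stated, since $\log K_D$ only lives on $D$. It also has the wrong sign: with $h_1(q)\neq0$, $f_1=h_1\varphi$ can only fail to extend where $\varphi$ blows up, i.e.\ where $\log|f_1|^2\to+\infty$. Finally, you never rule out exterior points of $D$ in $\Omega$.

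The missing ingredient is the correct Pflug--Zwonek statement \cite[Lemma~11]{PZ02}: for a bounded pseudoconvex $D$, the set of $q\in\partial D$ with $\limsup_{z\to q}K_D(z)<\infty$ is pluripolar. To use it, do not relabel to get $h_1(q)\neq0$. Instead take $f_1$ to be the constant function, which lies in $A^2(D)$ because $D$ is bounded. Then $\eta:=1/\varphi$ continues semi-single-valuedly to $\Omega$, $|\eta|^2$ is single-valued and real-analytic, and $|\eta|^2K_D=K_\Omega^\lambda$ on $D$. Now split $\partial D\cap\Omega$ into two parts:
\begin{itemize}
\item Points with $\eta(q)=0$ lie in the zero set of a local holomorphic branch that is not identically zero (since $|\eta|^2>0$ on $D$). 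This part is locally pluripolar.
\item Points with $\eta(q)\neq0$ satisfy $K_D=K_\Omega^\lambda/|\eta|^2$, which stays bounded as $z\to q$. This part is pluripolar by Pflug--Zwonek.
\end{itemize}
Josefson's theorem then makes $\partial D\cap\Omega$ pluripolar. Since a closed pluripolar set cannot separate $\Omega$, $D$ has no exterior points in $\Omega$, so $\Omega\setminus D=\partial D\cap\Omega$. Your Step 4 then gives $\lambda=1$.
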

 \begin{proof}  By \eqref{Metric equality up to a constant}, there exists a ball $O \subseteq D$ and a nowhere-zero holomorphic function $\varphi$ on $O$ such that
$$
K_{D}(z, z) = |\varphi(z)|^2K_{\Omega}^{\lambda}(z, z) \quad \hbox{on $O$}.
$$
By applying  Proposition \ref{f over varphi is semi-single-valued} with $f= 1$, we have  $\eta(z) := {1 \over \varphi(z)}$ admits a semi-single-valued holomorphic extension to $\Omega$ and $|\eta(z)|^2$ is a real-analytic function on $\Omega$.  By analyticity,
\begin{equation}\label{Equation 29}
    |\eta(z)|^2K_{D}(z, z) = K_{\Omega}^{\lambda}(z, z) \quad \hbox{on }D.
\end{equation}
Let
$$
V_{\eta} = \{z \in \Omega: |\eta(z)|^2 = 0\}.
$$
For any sufficiently small ball $W\subseteq\Omega$, the set
$V_\eta\cap W$ is the zero locus of a local holomorphic branch of
$\eta$ on $W$. This implies that $\partial D \cap V_{\eta}$ is locally pluripolar. Inspired by \cite{HL24}, we consider the disjoint union
$$
\partial D \cap \Omega = (\partial D \cap V_{\eta}) \cup ((\partial D \cap \Omega) \setminus V_{\eta}).
$$
Take any $p \in (\partial D \cap \Omega) \setminus V_{\eta}$.  Since $|\eta(p)|^2 \neq 0$, by \eqref{Equation 29},
$$
K_{D}(z, z) = {1 \over |\eta(z)|^2}K^{\lambda}_{\Omega}(z, z), \quad z \in D,\hbox{ sufficiently close to $p$}.
$$
It follows that $\limsup_{q \in D \to p} K_{D}(q, q) < \infty$.  By \cite[Lemma 11]{PZ02}, $(\partial D \cap \Omega) \setminus V_{\eta}$ is locally pluripolar.  Since locally pluripolar sets coincide with globally pluripolar sets, \cite{Jo78}, $\partial D \cap \Omega$ is pluripolar.  As a pluripolar set cannot separate interior and exterior points, $D$ has no exterior points in $\Omega$.   Thus, $\Omega\setminus D = \partial D \cap \Omega$ is pluripolar.  Consequently, $\lambda = 1$.
\end{proof}

\begin{remark}\label{Boundedness hypothesis used here remark - 1}
    The bounded pseudoconvex hypothesis on $D$ is needed in the application of \cite[Lemma 11]{PZ02} in the preceding proof.
\end{remark}

The following lemma shows that, although there is no a priori holomorphic
isometric immersion of $(\Omega,\lambda\omega_\Omega)$ into
$(\mathbb{P}^{N-1}, \omega_{FS})$, $N \leq \infty$, such an immersion can nevertheless be obtained in our
setting by applying the new Calabi-type theorem developed in
$\S$\ref{sec: new calabi thm}.

\begin{lem}\label{Lemma prior to the main Stein theorem} Let $M$ be an $n$-dimensional complex manifold with a well-defined Bergman metric $\omega_M$ and $\Omega \subseteq \mathbb{C}^n$ a bounded domain.  Assume there exists an open connected set $U \subseteq M$ and a holomorphic map $f: U \to \Omega$ such that
$$
    \omega_M = \lambda f^*(\omega_{\Omega}), \quad\hbox{ for some positive }\lambda \in \mathbb{R}.
$$
Let $N$ with $2 \leq N \leq \infty$ denote the dimension of $A^2_{(n, 0)}(M)$ and define
$$
\omega_{\lambda} := \lambda \omega_{\Omega} = \lambda i\partial\overline{\partial}\log K_{\Omega}.
$$  Then there exists a holomorphic isometric embedding $[H]$ from $(\Omega, \omega_{\lambda})$ to $(\mathbb{P}^{N-1}, \omega_{FS})$.
    \end{lem}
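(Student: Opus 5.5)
The plan is to build the immersion from an orthonormal basis of $A^2_{(n,0)}(M)$ transported into $\Omega$ by $f$, and then extend it to all of $\Omega$ with Theorem~\ref{Theorem 2 and 2+ in Ming's Notes}, taking $\rho=K_\Omega^\lambda$ and $\mu=1/\lambda$. The Bergman kernel $K_\Omega(z,w)$ itself supplies the required sesqui-holomorphic extension of $\rho^\mu$ to $\Omega\times\Omega$.

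\textbf{Step 1: a local sum-of-squares identity on $V=f(U')$.} Since $\omega_M$ and $\omega_\Omega$ are nondegenerate, $f$ is a local biholomorphism. Shrink $U$ to a small simply connected coordinate ball $U'$ on which $f$ is a biholomorphism onto an open set $V\subseteq\Omega$, and on which $K_\Omega$ does not vanish. Let $\{\Phi_k\}_{k=1}^N$ be an orthonormal basis of $A^2_{(n,0)}(M)$ with local representatives $\phi_k$, and put $g=f^{-1}\colon V\to U'$. On $U'$ the hypothesis reads
$$i\partial\overline\partial\log K_M=\lambda\, i\partial\overline\partial\log(K_\Omega\circ f).$$
So $\log K_M-\lambda\log (K_\Omega\circ f)$ is pluriharmonic on $U'$, and it equals $\log|\psi|^2$ for some nowhere-vanishing holomorphic $\psi$ on $U'$. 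Now define $h_k:=(\phi_k/\psi)\circ g$ on $V$. Then
$$\sum_k|h_k|^2=\frac{K_M\circ g}{|\psi\circ g|^2}=K_\Omega^\lambda\quad\text{on } V,$$
with locally uniform convergence, since the Bergman series converges locally uniformly.

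\textbf{Step 2: extension to $\Omega$.} Apply Theorem~\ref{Theorem 2 and 2+ in Ming's Notes} to the Kähler manifold $(\Omega,\omega_\lambda)$, with $\rho=K_\Omega^\lambda$ positive and real-analytic, the open set $V$, and the functions $h_k$. Part~(3), using $N\ge2$, gives a well-defined holomorphic isometric immersion $[H]\colon(\Omega,\omega_\lambda)\to(\mathbb P^{N-1},\omega_{FS})$. Here $N\ge2$ holds because a Bergman metric exists: base-point-freeness together with separation of directions forces at least two independent forms.

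\textbf{Step 3: injectivity (the main obstacle).} It remains to upgrade the immersion to an embedding. The information available is:
\begin{itemize}
\item on $V$, $[H]=\beta_M\circ g$, where $\beta_M$ is the Bergman--Bochner map of $M$;
\item by part~(1), all branches of $H$ differ by unimodular constants;
\item the polarized identity $\sum_k \widetilde h_k(z)\overline{\widetilde h_k(w)}=K_\Omega(z,w)^{\lambda}$ holds up to unimodular constants and branch choice. This comes from complexifying and continuing, as in the proof of Theorem~\ref{Theorem 2 and 2+ in Ming's Notes}.
\end{itemize}
Suppose $[H](z_1)=[H](z_2)$, so that $\widetilde H(z_1)=c\,\widetilde H(z_2)$ for some branches and some $c$. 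Comparing norms gives $|c|^2=K_\Omega(z_1,z_1)^\lambda/K_\Omega(z_2,z_2)^\lambda$. Then
$$\Big|\sum\widetilde h_k(z_1)\overline{\widetilde h_k(z_2)}\Big|^2=\|H(z_1)\|^2\|H(z_2)\|^2,$$
which yields
$$|K_\Omega(z_1,z_2)|^{2}=K_\Omega(z_1,z_1)K_\Omega(z_2,z_2)$$
after taking the $1/\lambda$ power of moduli. Here some care is needed: the modulus identity $|\sum \widetilde h_k(z)\overline{\widetilde h_k(w)}|=|K_\Omega(z,w)|^\lambda$ must be propagated globally. I expect this to follow by continuing in $z$ and $w$ separately along curves avoiding zeros of $K_\Omega$ and then using real-analyticity of both sides on $\Omega\times\Omega$.

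Equality in Cauchy--Schwarz for the Bergman kernel of $\Omega$ means the evaluation functionals at $z_1$ and $z_2$ on $A^2(\Omega)$ are proportional, i.e.\ $\beta_\Omega(z_1)=\beta_\Omega(z_2)$. Since $\Omega$ is a bounded domain, its Bergman space separates points, so $z_1=z_2$. Thus $[H]$ is injective, hence a holomorphic isometric embedding.
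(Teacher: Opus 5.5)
Your proposal is correct. Steps 1 and 2 match the paper's construction (your $1/(\psi\circ g)$ is the paper's $\varphi$), but your injectivity argument takes a different route.

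The paper never propagates a two-variable identity. It picks $p\in V$ with $K_\Omega(p,q_i)\neq0$ and continues the complexified identity $\sum_j h_j(z)\overline{h_j(\eta)}=K_\Omega^\lambda(z,\eta)$ only in $z$, along paths from $p$ to $q_i$ that avoid $\{K_\Omega(\cdot,p)=0\}$, with $\eta$ pinned in $V$. Evaluating at $z=q_i$ gives $|K_\Omega(\eta,q_1)|=a\,|K_\Omega(\eta,q_2)|$ for $\eta\in V$. A constant-modulus (open mapping) argument then upgrades this to $K_\Omega(\cdot,q_1)=b\,K_\Omega(\cdot,q_2)$.

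You instead establish the global identity $|S(z,w)|=|K_\Omega(z,w)|^\lambda$ on $\Omega\times\Omega$, where $S(z,w)=\sum_k\widetilde h_k(z)\overline{\widehat h_k(w)}$ for arbitrary branches near $z$ and $w$. You then read off equality in Cauchy--Schwarz for the kernel functions $K_\Omega(\cdot,z_1)$ and $K_\Omega(\cdot,z_2)$ directly. This is conceptually cleaner. It says the Fubini--Study ``angle'' between $[H](z)$ and $[H](w)$ is the one between $\beta_\Omega(z)$ and $\beta_\Omega(w)$ raised to the power $\lambda$, so injectivity of $[H]$ is visibly equivalent to point separation by $A^2(\Omega)$. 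The price is a two-variable propagation, which the paper's one-variable continuation avoids.

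The propagation you flag does go through, but it should be stated precisely. First, $|S|^2$ is a well-defined real-analytic function on $\Omega\times\Omega$. It is well defined because branches differ by unimodular constants (Theorem~\ref{Theorem 2 and 2+ in Ming's Notes}), and $S$ is locally sesqui-holomorphic by the two-curve statement in part~(1) of Proposition~\ref{First half of Theorem 2 and 2+ in Ming's Notes}. Second, $|K_\Omega|^{2\lambda}$ need not be real-analytic at zeros of $K_\Omega(z,w)$ when $\lambda\notin\mathbb{Z}$, so ``real-analyticity on $\Omega\times\Omega$'' is not quite right. Instead, apply the identity principle on $\{(z,w)\in\Omega\times\Omega: K_\Omega(z,w)\neq0\}$. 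This set is connected and dense, being the complement of the zero set of a function holomorphic in $(z,\bar w)$. The two sides agree on $V\times V$ (after shrinking $V$) by complexification, hence on this set, and hence everywhere by continuity.
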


\begin{proof}
Since $\omega_M = \lambda f^*\omega_\Omega$ and both metrics are
positive definite, $df$ has full rank. Thus, after shrinking $U$, we may
assume that $(U,z)$ is a coordinate chart and that
$f\colon U\to V:=f(U)$ is biholomorphic. Let $g=f^{-1}\colon V\to U$.
Then
\begin{equation}\label{Equation 2++}
g^*\omega_M=\omega_\lambda.
\end{equation}
Let $\{\Phi_j\}_{j=1}^N$ be an orthonormal basis of
$A^2_{(n,0)}(M)$. Writing
$\Phi_j=\phi_j\,dz_1\wedge\cdots\wedge dz_n$ on $U$, we have
$$
\omega_M
=
i\partial\overline{\partial}
\log\sum_{j=1}^N|\phi_j|^2
\quad\hbox{on }U,
$$
where the series converges locally uniformly on $U$ when $N=\infty$. Set $\rho:=K_\Omega^\lambda$. By \eqref{Equation 2++},
$$
i\partial\overline{\partial}
\log\sum_{j=1}^N|\phi_j\circ g|^2
=i\partial\overline{\partial}\log\rho
\quad\hbox{on }V,
$$
where the series converges locally uniformly on $V$ when $N=\infty$.
After shrinking $V$, we may therefore choose a nowhere-vanishing
holomorphic function $\varphi$ on $V$ such that
$$
\rho=|\varphi|^2\sum_{j=1}^N|\phi_j\circ g|^2.
$$
Set $h_j:=\varphi(\phi_j\circ g)$ and
$H:=(h_1,h_2,\ldots)$. When $N<\infty$, the latter means
$(h_1,\ldots,h_N)$. Then
\begin{equation}\label{in the new notation, definition of rho as a summation}
K_\Omega^\lambda=\rho=\sum_{j=1}^N|h_j|^2
\quad\hbox{on }V,
\end{equation}
where the series converges locally uniformly on $V$ when $N=\infty$.
We apply Theorem~\ref{Theorem 2 and 2+ in Ming's Notes} to
$(\Omega,\omega_\lambda)$ with $\rho=K_\Omega^\lambda$ and
$\mu=\frac{1}{\lambda}$. Indeed,
$\rho^\mu=K_\Omega$, whose polarization $K_\Omega(w,\eta)$ is globally
defined on $\Omega\times\Omega$. Hence part 3 of Theorem~\ref{Theorem 2 and 2+ in Ming's Notes} gives a holomorphic
isometric immersion
$$
[H]\colon(\Omega,\omega_\lambda)\longrightarrow
(\mathbb P^{N-1},\omega_{FS}).
$$

It remains to prove that $[H]$ is injective. Suppose otherwise, and
choose $q_1\neq q_2$ in $\Omega$ such that
$[H(q_1)]=[H(q_2)]$. Since $K_\Omega(\cdot,q_i)$ is not identically
zero, we may choose $p\in V$ such that
$K_\Omega(p,q_i)\neq0$ for $i=1,2$. After shrinking $V$ around $p$ if
necessary, we may assume that
\begin{equation}\label{Equation 4++}
K_\Omega(w,\eta)\neq0,
\qquad w,\eta\in V,
\end{equation}
and that $K_\Omega(w,q_i)\neq0$ for all $w\in V$ and $i=1,2$.
Consider
$$
Z:=\{z\in\Omega:K_\Omega(z,p)=0\}.
$$
Since $K_\Omega(p,p)>0$, $Z$ is a proper complex analytic subvariety
of $\Omega$. Moreover,
$K_\Omega(q_i,p)=\overline{K_\Omega(p,q_i)}\neq0$, so $q_i\notin Z$.
Since $\Omega\setminus Z$ is path connected, choose a path
$\alpha_i\colon[0,1]\to\Omega\setminus Z$ joining $p$ to $q_i$.

For $i=1,2$,
let $H_i=(h_{1,i},h_{2,i},\ldots)$ be the germ near $q_i$ obtained by
analytically continuing $H|_V$ along $\alpha_i$. By
part~\textup{(1)} of
Proposition~\ref{First half of Theorem 2 and 2+ in Ming's Notes}, we can choose a neighborhood $V_i$ of $q_i$ such that all of the
$h_{j,i}$ are holomorphic on $V_i$. Moreover, by
part~\textup{(3)} of
Theorem~\ref{Theorem 2 and 2+ in Ming's Notes},
$[H]=[H_i]$ on $V_i$. Since $[H(q_1)]=[H(q_2)]$, there exists
$c\neq0$ such that
\begin{equation}\label{Equation that says H1q1 equals cH2q2}
H_1(q_1)=cH_2(q_2).
\end{equation}
By \eqref{Equation 4++}, after shrinking $V$ if necessary,
complexifying
\eqref{in the new notation, definition of rho as a summation}
gives a branch of the $\lambda$-power of $K_\Omega(z,\eta)$ on
$V\times V$ such that
\begin{equation}\label{local complexification for injectivity}
K_\Omega^\lambda(z,\eta)
=
\sum_{j=1}^N h_j(z)\overline{h_j(\eta)},
\qquad (z,\eta)\in V\times V.
\end{equation}

We next continue this chosen branch along the paths $\alpha_i$. For
$i=1,2$, consider the path
$$
\beta_i(t)=(\alpha_i(t),p),\qquad 0\leq t\leq1,
$$
in $\Omega\times\Omega$. Since
$\alpha_i([0,1])\cap Z=\varnothing$, $K_\Omega$ does not vanish along
$\beta_i$. By compactness of $\alpha_i([0,1])$, we may choose a
neighborhood $\mathcal N_i$ of $\beta_i([0,1])$ in
$\Omega\times\Omega$ on which $K_\Omega$ does not vanish. After
shrinking $V$ around $p$ if necessary, we may further assume that
$$
\alpha_i([0,1])\times V\subseteq\mathcal N_i.
$$
Consequently, the chosen branch $K_\Omega^\lambda$ in
\eqref{local complexification for injectivity} admits continuation
along $\beta_i$ through local branches on neighborhoods in
$\mathcal N_i$, each holomorphic in the first variable and
anti-holomorphic in the second. Denote the resulting local branch near $(q_i,p)$ by
$\widetilde K_{\Omega,i}^\lambda$. After shrinking $V_i$ and $V$ if
necessary, $\widetilde K_{\Omega,i}^\lambda$ is a branch of the
$\lambda$-power of $K_\Omega(z,\eta)$ on $V_i\times V$.

On the other hand, fix $\eta\in V$ and
regard \eqref{local complexification for injectivity} as an identity of
holomorphic functions in $z$. Continuing both sides analytically along
$\alpha_i$, each $h_j$ continues to $h_{j,i}$, while
$\overline{h_j(\eta)}$ remains fixed. On the left, the chosen branch of
$K_\Omega(z,\eta)^\lambda$ continues to
$\widetilde K_{\Omega,i}^\lambda(z,\eta)$. By
part~\textup{(1)} of
Proposition~\ref{First half of Theorem 2 and 2+ in Ming's Notes}, the resulting series converges locally
uniformly. Since $\eta\in V$ was arbitrary, we obtain, for $i=1, 2,$
\begin{equation}\label{Equation 6++}
\widetilde K_{\Omega,i}^\lambda(z,\eta)
=
\sum_{j=1}^N h_{j,i}(z)\overline{h_j(\eta)},
\qquad (z,\eta)\in V_i\times V.
\end{equation}
Evaluating \eqref{Equation 6++} at $z=q_i$ for $i=1, 2$ respectively, and using
\eqref{Equation that says H1q1 equals cH2q2}, we obtain
$$
\widetilde K_{\Omega,1}^\lambda(q_1,\eta)
=
c\,\widetilde K_{\Omega,2}^\lambda(q_2,\eta),
\qquad \eta\in V.
$$
Every local branch of the $\lambda$-power of $K_\Omega$ has modulus
$|K_\Omega|^\lambda$. Hence
$$
|K_\Omega(q_1,\eta)|^\lambda
=
|c|\,|K_\Omega(q_2,\eta)|^\lambda,
\qquad \eta\in V.
$$
Set $a=|c|^{1/\lambda}>0$. By the Hermitian symmetry of the Bergman
kernel,
$$
|K_\Omega(\eta,q_1)|
=
a|K_\Omega(\eta,q_2)|,
\qquad \eta\in V.
$$
Since $K_\Omega(\eta,q_2)\neq0$ on $V$, the holomorphic function
$
\frac{K_\Omega(\eta,q_1)}
{aK_\Omega(\eta,q_2)}
$
has constant modulus one. Therefore there exists $\theta\in\mathbb R$
such that
$$
K_\Omega(\eta,q_1)
=
ae^{i\theta}K_\Omega(\eta,q_2)
\quad\hbox{on }V.
$$
By analyticity, this identity holds for every $ \eta \in\Omega$. Writing
$b=ae^{i\theta}$ and using the reproducing property, we obtain
$f(q_1)=\overline b\,f(q_2)$ for every $f\in A^2(\Omega)$. This is impossible as
$\Omega$ is bounded. Indeed, note $1,z_1,\ldots,z_n\in A^2(\Omega)$. Taking $f=1$
gives $b=1$, and then taking $f=z_k$, $1\leq k\leq n$, gives
$q_1=q_2$, a contradiction.

Thus $[H]$ is injective and hence is a holomorphic isometric embedding.
\end{proof}

    We now prove the first main theorem of the paper.

    \begin{proof}[Proof of Theorem~\ref{The main Stein result}]
We first assume that $\lambda>0$ is constant. Set
$\omega_\lambda=\lambda\omega_\Omega$, so that
$f^*\omega_\lambda=\omega_M$. Let
$N=\dim A^2_{(n,0)}(M)$. Since $\omega_M$ is a well-defined Bergman
metric, $A^2_{(n,0)}(M)$ separates holomorphic tangent directions; in
particular, $N>1$.

Let
$[H]\colon(\Omega,\omega_\lambda)\to(\mathbb P^{N-1},\omega_{FS})$
be the holomorphic isometric embedding given by
Lemma~\ref{Lemma prior to the main Stein theorem}, and let
$\beta_M\colon M\to\mathbb P^{N-1}$ be the Bergman--Bochner map of $M$.
By Calabi's rigidity theorem (Theorem \ref{thm:Rigidity}), there exists a unitary isometry $L$ from
the smallest closed projective subspace containing the image of
$\beta_M|_U$ onto the smallest closed projective subspace containing the
image of $[H]|_{f(U)}$ such that
\begin{equation}\label{Equation 8++}
L\circ\beta_M=[H]\circ f\quad\hbox{on }U.
\end{equation}

Since $\omega_\Omega$ is complete, so is
$\omega_\lambda=\lambda\omega_\Omega$. Hence, by
\cite[Chapter VI, Theorem 6.3]{Ko63}, $f$ admits an analytic continuation
along every curve in $M$ to a possibly multi-valued local holomorphic
isometry $F$ with values in $\Omega$. Analytically continuing
\eqref{Equation 8++} along any curve, every local branch of $F$ satisfies
\begin{equation}\label{Equation 9+++}
L\circ\beta_M=[H]\circ F.
\end{equation}
Since the left-hand side is globally defined and single-valued, all
branches of $[H]\circ F$ agree. As $[H]$ is injective, any two local
branches of $F$ agree. Thus $F$ is single-valued and defines a global
local holomorphic isometry $F\colon M\to\Omega$ extending $f$.

Since $\Omega$ is bounded and $F$ is a local biholomorphism, $|F|^2$ is
a bounded strictly plurisubharmonic function on $M$. By Huang--Li
\cite[Proposition~3.1]{HuLi26}, $A^2_{(n,0)}(M)$ separates points of $M$,
which is equivalent to $\beta_M$ being injective. If $F(p)=F(q)$, then
\eqref{Equation 9+++} gives
$L\circ\beta_M(p)=L\circ\beta_M(q)$. Since both $L$ and $\beta_M$ are
injective, $p=q$. Hence $F$ is injective. Setting $D:=F(M) \subseteq \Omega$, we conclude
that $F\colon M\to D$ is biholomorphic.

Since $F$ is biholomorphic, the biholomorphic invariance of the Bergman
metric gives $F^*\omega_D=\omega_M$. On the other hand, analytic
continuation of the original local isometry gives
$F^*\omega_\lambda=\omega_M$, i.e.,
$F^*(\lambda\omega_\Omega)=\omega_M$. Therefore
$\omega_D=\lambda\omega_\Omega$ on $D$. Moreover, $D$ is biholomorphic
to the Stein manifold $M$, and hence is 
pseudoconvex. Lemma~\ref{Proposition that shows that Omega minus D is a closed pluripolar subset}
now applies and yields $\lambda=1$ and that $\Omega\setminus D$ is a
closed pluripolar subset of $\Omega$.

Finally, suppose that $n\geq2$ and
$\omega_M=\Lambda f^*\omega_\Omega$ on $U$, where $\Lambda>0$. By
Proposition~\ref{Conformal implies constant proposition in dimension greater than one},
$\Lambda$ is a positive constant on $U$. The conclusion then follows
from the first part of the proof.
\end{proof}

    \begin{remark}
        As $M$ is biholomorphic to a bounded domain $D$, necessarily $N = \infty$ in the preceding proof.
    \end{remark}

We finally give a brief proof for the statement in Remark \ref{rmk:question21dimension1}.

\begin{proof}[Proof of Remark \ref{rmk:question21dimension1}]
We only need to justify the case when $M$ is a Bergman-nondegenerate
Riemann surface with a well-defined Bergman metric, since the other two
cases follow immediately from Theorem~\ref{The main Stein result}. In this
case, the statement in Remark~\ref{rmk:question21dimension1} follows from the same proof as
Theorem~\ref{The main Stein result}, with a simple modification. Indeed,
towards the end of that proof, we showed that
$A^2_{(n,0)}(M)$ separates points of $M$. In the present setting, this is
guaranteed directly by the Bergman-nondegeneracy assumption. Finally,
Lemma~\ref{Proposition that shows that Omega minus D is a closed pluripolar subset}
is still applicable, since every planar domain is pseudoconvex; in
particular, $F(M)\subseteq\Omega$ is pseudoconvex.
\end{proof}

\section{Preparation for the proof of Theorem~\ref{Main result - bounded homogoeneous case}}\label{Section that proves the bounded homogeneous case}

We first prove the following preliminary result. 

\begin{pro}\label{preliminary}
Let $D\subseteq\Omega$ be domains in $\mathbb C^n$, with $D$ bounded, and suppose that
\begin{equation}\label{i ddbar log kD equals lambda i ddbar log kOmega - this has appeared before - 3}
i\partial\overline{\partial}\log K_D
=
\lambda i\partial\overline{\partial}\log K_\Omega
\quad\hbox{on }D
\end{equation}
for some constant $\lambda>0$. Then, for every $p\in D$, there exist a
neighborhood $O\subseteq D$ of $p$ and a nowhere-vanishing holomorphic
function $\varphi$ on $O$ with the following properties: $\frac{1}{\varphi}$ admits a semi-single-valued holomorphic
extension to $\Omega$, and
$\left|\frac{1}{\varphi}\right|^2$ admits a single-valued real-analytic
extension to $\Omega$. Moreover,
 $\varphi$ admits a semi-single-valued
holomorphic extension to $D,$  and this extension satisfies
\begin{equation}\label{First appearance of the functional equation of the Bergman kernel for the bounded homogeneous case}
K_D(z,z)
=
|\varphi(z)|^2K_\Omega^\lambda(z,z)
\quad\hbox{on }D.
\end{equation}
\end{pro}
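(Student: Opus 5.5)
Fix $p\in D$. Since both sides of \eqref{i ddbar log kD equals lambda i ddbar log kOmega - this has appeared before - 3} are $i\partial\overline\partial$ of logarithms of positive real-analytic functions, the difference $\log K_D-\lambda\log K_\Omega$ is pluriharmonic on $D$. On a small ball $O\ni p$, which is simply connected, it is therefore the real part of a holomorphic function. This yields a nowhere-vanishing holomorphic $\varphi$ on $O$ with $K_D=|\varphi|^2K_\Omega^\lambda$ on $O$. This is exactly \eqref{Metric equality up to a constant - 2}.

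Next I apply Proposition~\ref{f over varphi is semi-single-valued} with $f\equiv1$, which lies in $A^2(D)$ because $D$ is bounded. It gives that $1/\varphi$ admits a semi-single-valued holomorphic extension to $\Omega$, and that $|1/\varphi|^2$ admits a single-valued real-analytic extension to $\Omega$. This settles the first two claims.

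For the extension of $\varphi$ itself along $D$, I continue $\varphi$ directly inside $D$ rather than inverting the extension of $1/\varphi$, which may vanish. Take a curve $\gamma$ in $D$ from $p$. Cover it by small balls $B_k\subseteq D$. On each ball, pluriharmonicity of $\log K_D-\lambda\log K_\Omega$ gives a nowhere-vanishing holomorphic $\varphi_k$ with $|\varphi_k|^2=K_D/K_\Omega^\lambda$. On overlaps (taken connected), $\varphi_k/\varphi_{k+1}$ is holomorphic with modulus one, hence a unimodular constant. Renormalizing each $\varphi_k$ by such a constant, the $\varphi_k$ glue into an analytic continuation of $\varphi$ along $\gamma$. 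Every local branch satisfies $|\varphi|^2=K_D/K_\Omega^\lambda$. Hence two continuations to the same point have holomorphic quotient of modulus one, so they differ by $e^{i\theta}$. This is precisely semi-single-valuedness on $D$. Finally, $|\varphi|^2$ is single-valued, and \eqref{First appearance of the functional equation of the Bergman kernel for the bounded homogeneous case} holds on all of $D$ by construction, or alternatively by the identity principle for real-analytic functions on the connected set $D$.

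The only delicate point is the bookkeeping in the last step: checking that the unimodular constants can be chosen consistently along the chain of balls, and that the resulting germs are genuine analytic continuations of the original $\varphi$. This is routine, since each step is just multiplication by a constant. Everything substantive, namely the extension of $1/\varphi$ to $\Omega$, is inherited from Theorem~\ref{Theorem 2 and 2+ in Ming's Notes} via Proposition~\ref{f over varphi is semi-single-valued}.
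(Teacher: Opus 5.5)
Your proof is correct. The first two steps coincide with the paper's: you produce $\varphi$ on a ball $O\ni p$ from the pluriharmonicity of $\log K_D-\lambda\log K_\Omega$, and then apply Proposition~\ref{f over varphi is semi-single-valued} with $f\equiv 1$.

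The last step differs. The paper obtains $\varphi$ on $D$ precisely by inverting the extension of $1/\varphi$. Let $u$ be the single-valued real-analytic extension of $|1/\varphi|^2$ to $\Omega$. Then $u=K_\Omega^\lambda/K_D$ on $O$, hence on all of $D$ by the identity principle. So $u>0$ on $D$, and every branch of $1/\varphi$ is zero-free over $D$. Taking reciprocals gives the semi-single-valued extension of $\varphi$ together with $K_D=|\varphi|^2K_\Omega^\lambda$ on $D$. Your worry that this extension ``may vanish'' therefore does not apply over $D$: its zeros lie in $\Omega\setminus D$.

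Your route instead continues $\varphi$ directly by a multiplicative harmonic-conjugate (monodromy) argument, using only the pluriharmonicity on $D$. This is more elementary. It also shows that this part of the statement is independent of the extension to $\Omega$, since it uses neither $1\in A^2(D)$ nor Theorem~\ref{Theorem 2 and 2+ in Ming's Notes}.

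What the paper's route records explicitly, and yours leaves implicit, is that the two extensions are compatible:
\begin{itemize}
\item over $D$, the branches of $\varphi$ are the reciprocals of the branches of the $\Omega$-extension of $1/\varphi$;
\item $uK_D=K_\Omega^\lambda$ on $D$.
\end{itemize}
The second identity is exactly what is invoked later in the proof of Proposition~\ref{The lemma that says in the BH case that lambda equals 1}. In your setup both facts follow in one line from uniqueness of analytic continuation along curves in $D$ (or from the identity principle applied to $u$), and they are worth stating.
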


\begin{proof}
Fix $p\in D$. By
\eqref{i ddbar log kD equals lambda i ddbar log kOmega - this has appeared before - 3},
the function
$$
\log K_D-\lambda\log K_\Omega
$$
is pluriharmonic on $D$. Hence, after shrinking to a simply connected
neighborhood $O\subseteq D$ of $p$, there exists a nowhere-vanishing
holomorphic function $\varphi$ on $O$ such that
\begin{equation}\label{local functional equation for Theorem I prime - 4}
K_D=|\varphi|^2K_\Omega^\lambda
\quad\hbox{on }O.
\end{equation}
Since $D$ is bounded, $1\in A^2(D)$. Applying
Proposition~\ref{f over varphi is semi-single-valued} to
\eqref{local functional equation for Theorem I prime - 4} with $f=1$, we
see that $\frac{1}{\varphi}$ admits a semi-single-valued holomorphic
extension to $\Omega$, while
$\left|\frac{1}{\varphi}\right|^2$ admits a single-valued real-analytic
extension to $\Omega$. Denote the latter extension by $u$. On $O$,
$$
u=\frac{K_\Omega^\lambda}{K_D}.
$$
Since both sides are real analytic on $D$, the identity principle for
real-analytic functions gives
\begin{equation}\label{u equals the quotient of Bergman kernels}
u=\frac{K_\Omega^\lambda}{K_D}>0
\quad\hbox{on }D.
\end{equation}
Thus every local branch of $\frac{1}{\varphi}$ is nowhere vanishing on
$D$. Taking reciprocals, we obtain a semi-single-valued holomorphic
extension of $\varphi$ to $D$. Moreover,
\eqref{u equals the quotient of Bergman kernels} gives
\begin{equation*}
K_D(z,z)=|\varphi(z)|^2K_\Omega^\lambda(z,z)
\quad\hbox{on }D.
\end{equation*}
Here $|\varphi|^2$ is single-valued, since any two branches of
$\varphi$ differ by a unimodular constant. This proves the desired statement.
\end{proof}

\subsection{Sufficient condition for $\lambda = 1$}

As mentioned in $\S$\ref{sec:introduction}, a major difficulty in the proof of
Theorem~\ref{Main result - bounded homogoeneous case} is to show that
$\lambda=1$. Accordingly, in this section we establish a sufficient
condition under which the normalizing constant $\lambda$ must equal one. The following proposition proves more than is needed in this paper, but it will be useful in our future study of Question~\ref{MainQuestion}.
Recall that $\hbox{Aut}(\Omega)$ denotes the holomorphic automorphism group of $\Omega$. For $p\in\Omega$, denote its
$\operatorname{Aut}(\Omega)$-orbit by
$
\Gamma_p:=\{G(p):G\in\operatorname{Aut}(\Omega)\}.
$

\begin{pro}\label{The lemma that says in the BH case that lambda equals 1}
Let $D\subseteq\Omega\subseteq\mathbb C^n$ be bounded domains such that
\begin{equation}\label{The key identity between the Bergman metrics}
i\partial\overline{\partial}\log K_D
=
\lambda i\partial\overline{\partial}\log K_\Omega
\quad\hbox{on }D
\end{equation}
for some constant $\lambda>0$. 
Define
$$
\widehat D:=\bigcup_{G\in\operatorname{Aut}(\Omega)}G^{-1}(D).
$$
Then $\widehat D$ is an $\operatorname{Aut}(\Omega)$-invariant domain
containing $D$, and $\widehat D\setminus D$ is Bergman-negligible in
$\widehat D$. In particular,
\begin{equation}\label{The key identity between the Bergman metrics is also satisfied for hatD}
K_D=K_{\widehat D}\quad\hbox{on }D,
\qquad
i\partial\overline{\partial}\log K_{\widehat D}
=
\lambda i\partial\overline{\partial}\log K_\Omega
\quad\hbox{on }\widehat D.
\end{equation}
Moreover, the following statements hold:
\begin{enumerate}[label=(\arabic*)]
\item If there exists an analytically Zariski dense subset
$X\subseteq\Omega$ such that, for every $p\in X$, the $\operatorname{Aut}(\Omega)$-orbit $\Gamma_p$
contains a nonconstant holomorphic curve, then $\lambda=1$.

\item If there exists $q\in D$ such that $\Gamma_q$ contains a
nonconstant holomorphic curve, then $\lambda=1$.
\end{enumerate}
\end{pro}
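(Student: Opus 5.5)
The plan has three parts: establish invariance of the identity \eqref{The key identity between the Bergman metrics} under $\operatorname{Aut}(\Omega)$, prove Bergman-negligibility of $\widehat D\setminus D$ (the main obstacle), and then obtain $\lambda=1$ from an Aut-invariant potential.

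\textbf{Invariance.} Each $G^{-1}(D)$ is a domain in $\Omega$, and it meets $D$ when $G$ is the identity. Since $\operatorname{Aut}(\Omega)$ is a group, $\widehat D$ is open and $\operatorname{Aut}(\Omega)$-invariant. Connectedness of $\widehat D$ should follow from connectedness of the identity component, or by working with the union over the component of $G^{-1}(D)$ meeting $D$. For $D'=G^{-1}(D)$, the transformation law gives
$$K_{D'}(z)=|\det JG(z)|^2K_D(Gz),\qquad K_\Omega(z)=|\det JG(z)|^2K_\Omega(Gz).$$
Since $G^*\omega_\Omega=\omega_\Omega$, every $D'$ satisfies \eqref{The key identity between the Bergman metrics} with the same $\lambda$.

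\textbf{Negligibility.} Apply Proposition~\ref{preliminary} to $D$ and take an orthonormal basis $\{\phi_k\}$ of $A^2(D)$ with $\phi_1$ a constant. By Proposition~\ref{f over varphi is semi-single-valued}, all $\phi_k/\varphi$ are continuations of one map $H$ from Theorem~\ref{Theorem 2 and 2+ in Ming's Notes}, so they share a common unimodular monodromy. Hence each quotient $\phi_k/\phi_1=(\phi_k/\varphi)/(\phi_1/\varphi)$ is single-valued on $\Omega$. Its only possible singularities lie on $V=\{u=0\}$, where $u=|1/\varphi|^2$ is the real-analytic extension and $V$ is locally the zero set of a holomorphic branch.

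It follows that every $f\in A^2(D)$ extends to a single-valued holomorphic function on $\Omega\setminus V$. Moreover the kernel $K_D(z,w)=\sum_k\phi_k(z)\overline{\phi_k(w)}$ extends, and on $D$
$$K_D=K_\Omega^\lambda/u.$$
The key point is that $u$ determines $K_D$ globally. Running the same construction for $D'$ gives $u_{D'}(z)=|\det JG(z)|^{2\lambda-2}u(Gz)$.

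I then must show three things: $V\cap\widehat D=\varnothing$; the extensions are in $L^2(\widehat D)$ with the same norm; and $\widehat D\setminus D$ is closed in $\widehat D$ and of measure zero. For the norms and the measure-zero claim I would compare the reproducing identities on $D$ and on $D'$. Integrating the extended $\phi_j\overline{\phi_k}$ against $1_D\,dV$ and against $1_{D'}\,dV$ produces the same moments $\int z^\alpha\bar z^\beta$. The uniqueness theorem for the compactly supported complex moment problem would then force the two measures to coincide up to null sets. This is the step I expect to be hardest, and it is where the uniqueness theorem quoted in \S\ref{sec: some auxiliary results} should enter.

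\textbf{$\lambda=1$.} Granting \eqref{The key identity between the Bergman metrics is also satisfied for hatD}, define
$$\Psi:=K_{\widehat D}/K_\Omega=|\varphi|^2K_\Omega^{\lambda-1}.$$
Here $\varphi$ is the factor supplied by Proposition~\ref{preliminary} applied to $\widehat D$. Because both kernels transform by $|\det JG|^2$, $\Psi$ is $\operatorname{Aut}(\Omega)$-invariant on $\widehat D$.

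For (2), let $\gamma\colon\mathbb D\to\Gamma_q$ be a nonconstant holomorphic curve. Then $\log\Psi\circ\gamma$ is constant, so
$$\log|\varphi\circ\gamma|^2+(\lambda-1)\log K_\Omega\circ\gamma$$
is constant near any point of $\gamma^{-1}(\widehat D)$. The first term is harmonic, since $|\varphi|^2$ is single-valued and locally the modulus squared of a holomorphic branch. The second term is strictly subharmonic, because $\omega_\Omega>0$ and $\gamma'\neq0$ generically. Hence $\lambda=1$.

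For (1), I would extend $\Psi$ as $K_\Omega^{\lambda-1}/u$ to $\Omega\setminus V$. Invariance under $\operatorname{Aut}(\Omega)$ then persists by real-analytic continuation, since $\Omega\setminus V$ is connected ($V$ being locally a hypersurface). Zariski density of $X$ gives some $p\in X\setminus V$. Running the same subharmonicity argument along a curve in $\Gamma_p$ near $p$ yields $\lambda=1$.
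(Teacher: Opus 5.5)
There is a genuine gap, and it is exactly at the step you flag as hardest: the Bergman-negligibility of $\widehat D\setminus D$. Everything else depends on it, including the connectedness of $\widehat D$. You are right that moment uniqueness is the tool, but your sketch gives no mechanism that produces a moment identity. The continued functions $\widehat\phi_k=\phi_1H_k/H_1$ live on $\Omega\setminus V$ and satisfy $\sum_k|\widehat\phi_k|^2=K_\Omega^\lambda/u$ there. On the other hand, $K_{D'}=K_\Omega^\lambda/u_{D'}$ with $u_{D'}(z)=|\det JG(z)|^{2\lambda-2}u(Gz)$. So for the $\widehat\phi_k$ to be orthonormal on $D'$ (your ``same moments'') you would already need $u_{D'}=u$, i.e.\ the $\operatorname{Aut}(\Omega)$-invariance of $uK_\Omega^{1-\lambda}$. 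That invariance is precisely what must be proved, and a priori $D'$ may even meet $V$. The natural unitary $A^2(D)\to A^2(D')$, $f\mapsto (f\circ G)\det JG$, does not help either: it sends $z^\alpha$ to $G(z)^\alpha\det JG(z)$, not to $z^\alpha$ times a fixed function, so it yields no moment identity. Your connectedness argument also fails as stated. $\operatorname{Aut}(\Omega)$ need not be connected, and keeping only the translates that meet $D$ changes the set $\widehat D$ in the statement. In the paper, connectedness is a consequence of the negligibility argument.

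The missing idea is to use Calabi rigidity to build a unitary $A^2(D)\to A^2(D_G)$, $D_G=G^{-1}(D)$, that acts as the identity in the $z$-variable.
\begin{enumerate}[label=(\roman*)]
\item Continue $H=(\phi_k/\varphi)_k$ along a path from $O$ to $O_G=G^{-1}(O)$ to get $A_G$. Set $B_G=(h_k/\widehat\varphi_G)_k$, where $\{h_k\}$ is an orthonormal basis of $A^2(D_G)$ and $\widehat\varphi_G=(\varphi\circ G)(\det JG)^{1-\lambda}$, so that $K_{D_G}=|\widehat\varphi_G|^2K_\Omega^\lambda$ on $O_G$.
\item Both $[A_G]$ and $[B_G]$ are linearly full isometric immersions of $(O_G,\lambda\omega_\Omega)$ with the same squared norm $K_\Omega^\lambda$. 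Hence, after absorbing a unimodular constant, $A_G=B_G\mathcal V_G$ for a unitary $\mathcal V_G$.
\item This gives a unitary $\mathcal U_G$ with $\mathcal U_G f=\widehat\varphi_G\,(f/\varphi)_G$ on $O_G$. Since monomials are single-valued on $\Omega$, $\mathcal U_G(z^\alpha)=z^\alpha h_G$ with $h_G=\mathcal U_G(1)$.
\item This yields the \emph{weighted} identity $\int_D z^\alpha\bar z^\beta\,dV=\int_{D_G}z^\alpha\bar z^\beta|h_G|^2\,dV$, and moment uniqueness gives $\chi_D=|h_G|^2\chi_{D_G}$.
\end{enumerate}
From $\chi_D=|h_G|^2\chi_{D_G}$ you get $m(D\triangle D_G)=0$, hence connectedness of $\widehat D$ and, via a countable subcover, $m(\widehat D\setminus D)=0$. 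You also get that $h_G$ is a unimodular constant, so $u_G=u$. That gives $u>0$ on $\widehat D$ and makes $\{\phi_1H_k/H_1\}$ an orthonormal basis of $A^2(\widehat D)$.

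Once this is in place, deriving invariance from $\Psi=K_{\widehat D}/K_\Omega$ is a valid and slightly cleaner substitute for the paper's Step 3, and (2) goes through as you wrote. For (1), work with $1/\Psi=uK_\Omega^{1-\lambda}$ instead, which is real-analytic on all of $\Omega$. Its invariance then extends from $\widehat D$ to $\Omega$, and $u(p)>0$ forces $u>0$ on the whole orbit $\Gamma_p$. With $\Psi$ on $\Omega\setminus V$ you know neither that $G$ preserves $\Omega\setminus V$ nor that the curve in $\Gamma_p$, which need not pass near $p$, avoids $V$.
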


\begin{proof}
Fix $p\in D$. By Proposition \ref{preliminary}, we can choose a simply connected ball
$O\subseteq D$ centered at $p$ and a nowhere-vanishing
holomorphic function $\varphi$ on $O$ such that
\begin{equation}\label{The functional Bergman equation with varphi globally defined on all of D}
K_D=|\varphi|^2K_\Omega^\lambda
\quad\hbox{on }O.
\end{equation}
 Moreover, 
$\left|\frac{1}{\varphi}\right|^2$ admits a single-valued real-analytic
extension to $\Omega$; denote it by $u$. As in \eqref{u equals the quotient of Bergman kernels}, this extension satisfies
\begin{equation}\label{u times KD equals KOmega lambda}
uK_D=K_\Omega^\lambda\quad\hbox{on }D.
\end{equation}
In particular, $u>0$ on $D$.
Choose an orthonormal basis $\{\phi_k\}_{k=1}^\infty$ of $A^2(D)$ such
that $\phi_1$ is a nonzero constant. Then by \eqref{The functional Bergman equation with varphi globally defined on all of D},
$$
\sum_{k=1}^\infty\left|\frac{\phi_k}{\varphi}\right|^2
=K_\Omega^\lambda\quad\hbox{on }O.
$$
Set
$$
H=\left(\frac{\phi_1}{\varphi},
\frac{\phi_2}{\varphi},\ldots\right) \quad\hbox{on }O.
$$
Applying Theorem~\ref{Theorem 2 and 2+ in Ming's Notes} to
$(\Omega,\lambda\omega_\Omega)$ with $\rho=K_\Omega^\lambda$ and
$\mu=\frac{1}{\lambda}$, we see that $H,$ initially defined on $O,$ admits a semi-single-valued
holomorphic extension to $\Omega$.

\medskip
\noindent{\bf Step 1. Comparison of $D$ and its automorphic translates.}
Fix $G\in\operatorname{Aut}(\Omega)$ and set
$D_G:=G^{-1}(D)$ and $O_G:=G^{-1}(O)$. Since $O_G$ is simply connected
and $\det JG$ is nowhere vanishing, we may choose a branch of
$(\det JG)^{1-\lambda}$ on $O_G$ and set
\begin{equation}\label{deffinition of hat phi sub G}
\widehat\varphi_G
:=
(\varphi\circ G)(\det JG)^{1-\lambda} \quad\hbox{on }O_G.
\end{equation}
We first note that
\begin{equation}\label{Equation 14}
K_{D_G}=|\widehat\varphi_G|^2K_\Omega^\lambda
\quad\hbox{on }O_G.
\end{equation}
Indeed, letting $z\in O_G$, the transformation laws for the Bergman kernels
of $D$ and $\Omega$ give
$$
K_{D_G}(z,z)=|\det JG(z)|^2K_D(G(z),G(z)),
\qquad
K_\Omega(G(z),G(z))
=|\det JG(z)|^{-2}K_\Omega(z,z).
$$
Bringing $G(z) \in O$ into \eqref{The functional Bergman equation with varphi globally defined on all of D} yields,
$$K_D(G(z),G(z))=\big|\varphi(G(z))\big|^2 K_\Omega^\lambda (G(z),G(z)).$$
Combining the last three identities,
we obtain
$$
K_{D_G}(z,z)
=
\big|\varphi(G(z))\big|^2|\det JG(z)|^{2(1-\lambda)}
K_\Omega^\lambda(z,z)
=
|\widehat\varphi_G(z)|^2K_\Omega^\lambda(z,z),
$$
which proves \eqref{Equation 14}.
Next choose a curve $\gamma$ in $\Omega$ joining $p$ to $G^{-1}(p)$, and let
$$
A_G=(a_{1,G},a_{2,G},\ldots)
$$
be the branch of $H$ obtained by analytic continuation along $\gamma$
to $O_G$. 
By Theorem~\ref{Theorem 2 and 2+ in Ming's Notes},
\begin{equation}\label{AG norm equality}
\sum_{k=1}^\infty|a_{k,G}|^2=K_\Omega^\lambda
\quad\hbox{on }O_G.
\end{equation}
Here the series on the left  converges locally uniformly on $O_G$.
Choose an orthonormal basis $\{h_k\}_{k=1}^\infty$ of $A^2(D_G)$,
again with $h_1$ a nonzero constant, and set
$$
B_G=
\left(
\frac{h_1}{\widehat\varphi_G},
\frac{h_2}{\widehat\varphi_G},\ldots
\right).
$$
By \eqref{Equation 14},
\begin{equation}\label{BG norm equality}
\sum_{k=1}^\infty
\left|\frac{h_k}{\widehat\varphi_G}\right|^2
=K_\Omega^\lambda
\quad\hbox{on }O_G.
\end{equation}
Thus $[A_G]$ and $[B_G]$ are holomorphic isometric immersions of
$(O_G,\lambda\omega_\Omega)$ into
$(\mathbb P^\infty,\omega_{FS})$.

We claim that neither $[A_G](O_G)$ nor $[B_G](O_G)$ is contained in a
proper closed projective subspace of $\mathbb P^\infty$. Indeed, suppose
that $[A_G](O_G)$ were contained in such a subspace. Then there would
exist $(v_k)_{k=1}^\infty\in\ell^2$, not identically zero, such that
$$
\sum_{k=1}^\infty v_k a_{k,G}=0\quad\hbox{on }O_G.
$$
The series converges locally uniformly by the Cauchy--Schwarz inequality.
Analytically continuing this identity back along $\gamma$ gives
$$
\sum_{k=1}^\infty v_k\frac{\phi_k}{\varphi}=0
\quad\hbox{on }O,
$$
and hence $\sum_{k=1}^\infty v_k\phi_k=0$ on $D$. This contradicts the
orthonormality of $\{\phi_k\}$. A similar argument, now using the
orthonormal basis $\{h_k\}$ of $A^2(D_G)$, shows that $[B_G](O_G)$ is
also not contained in a proper closed projective subspace.

Calabi's rigidity theorem therefore gives a unitary operator
$\mathcal V_G\colon\ell^2\to\ell^2$ such that, after possibly multiplying
$\mathcal V_G$ by a unimodular constant,
$$
A_G=B_G\mathcal V_G\quad\hbox{on }O_G.
$$
Indeed, projective equality initially gives
$A_G=a_GB_G\mathcal V_G$ for a nowhere-vanishing holomorphic function
$a_G$ on $O_G$; comparing \eqref{AG norm equality} and
\eqref{BG norm equality} gives $|a_G|\equiv1$, so $a_G$ is a constant
of modulus one and can be absorbed into $\mathcal V_G$. Writing
$$
(\widetilde h_1,\widetilde h_2,\ldots)
:=(h_1,h_2,\ldots)\mathcal V_G,
$$
we obtain an orthonormal basis
$\{\widetilde h_k\}_{k=1}^\infty$ of $A^2(D_G)$ satisfying
\begin{equation}\label{Equation 34}
\widetilde h_k=\widehat\varphi_G a_{k,G}
\quad\hbox{on }O_G,\qquad k\geq1.
\end{equation}

We now apply a similar argument as in
\cite[Proof of Theorem~1.3]{BhGaNaXi26}. Since
$\{\phi_k\}_{k=1}^\infty$ and
$\{\widetilde h_k\}_{k=1}^\infty$ are orthonormal bases of
$A^2(D)$ and $A^2(D_G)$, respectively, there exists a unique unitary
operator
$$
\mathcal U_G\colon A^2(D)\longrightarrow A^2(D_G)
$$
such that
$
\mathcal U_G(\phi_k)=\widetilde h_k$, $k\geq1.
$
More precisely, let
$f=\sum_{k=1}^\infty c_k\phi_k\in A^2(D)$, where
$(c_k)_{k=1}^\infty\in\ell^2$. Then
\begin{equation}\label{UG expansion}
\mathcal U_G(f)=\sum_{k=1}^\infty c_k\widetilde h_k
\quad\hbox{in }A^2(D_G).
\end{equation}
The series on the right also converges locally uniformly on $D_G$.
On the other hand, by
Proposition~\ref{f over varphi is semi-single-valued},
$\frac{f}{\varphi}$ admits an analytic continuation along $\gamma$. Let
$\left(\frac{f}{\varphi}\right)_G$ denote the branch obtained by
analytic continuation along $\gamma$ from $O$ to $O_G$. We claim that
\begin{equation}\label{continuation of f over varphi along G}
\left(\frac{f}{\varphi}\right)_G
=
\sum_{k=1}^\infty c_k a_{k,G}
\quad\hbox{on }O_G.
\end{equation}
Indeed, for each point $\gamma(t)$ on the path, let
$A_t:=(a_{k,t})_{k=1}^\infty$ denote the corresponding local branch of the
analytic continuation of
$H=\left(\frac{\phi_k}{\varphi}\right)_{k=1}^\infty$
on a common neighborhood $U_t$ of $\gamma(t)$. By
Theorem~\ref{Theorem 2 and 2+ in Ming's Notes},
$\sum_{k=1}^\infty |a_{k,t}|^2$ converges locally uniformly on $U_t$.
Since $(c_k)\in\ell^2$, the Cauchy--Schwarz inequality implies that
$\sum_{k=1}^\infty c_k a_{k,t}$ also converges locally uniformly on
$U_t$. These local sums are compatible on overlapping continuation
neighborhoods and, on the initial neighborhood $O$, equal
$$
\sum_{k=1}^\infty c_k\frac{\phi_k}{\varphi}
=
\frac{f}{\varphi}.
$$
They therefore give the analytic continuation of
$\frac{f}{\varphi}$ along $\gamma$. At the endpoint this gives
\eqref{continuation of f over varphi along G}.
Combining it with \eqref{Equation 34} and
\eqref{UG expansion}, we obtain
\begin{equation}\label{UG analytic continuation}
\mathcal U_G(f)
=
\widehat\varphi_G
\left(\frac{f}{\varphi}\right)_G
\quad\hbox{on }O_G.
\end{equation}
Set $h_G:=\mathcal U_G(1) \in A^2(D_G)$. Then
\begin{equation}\label{hG definition local}
h_G
=
\widehat\varphi_G
\left(\frac{1}{\varphi}\right)_G
\quad\hbox{on }O_G,
\end{equation}
and $h_G\not\equiv0$. Since every monomial $z^\alpha$ belongs to
$A^2(D)$ and is globally holomorphic on $\Omega$, its analytic
continuation along $\gamma$ is itself. Thus
\eqref{UG analytic continuation} and
\eqref{hG definition local} imply
$$
\mathcal U_G(z^\alpha)=\widehat\varphi_G
\left(\frac{z^\alpha}{\varphi}\right)_G=\widehat\varphi_G z^\alpha 
\left(\frac{1}{\varphi}\right)_G
=z^\alpha h_G
\quad\hbox{on }O_G,
$$
and hence, by the identity theorem,
\begin{equation}\label{UG monomial}
\mathcal U_G(z^\alpha)=z^\alpha h_G
\quad\hbox{on }D_G.
\end{equation}
Therefore, for all multi-indices $\alpha,\beta\in\mathbb N^n$,
\begin{equation}\label{moment equality D and DG}
\int_D z^\alpha\bar{z}^{\beta}\,dV
=
\int_{D_G}z^\alpha\bar{z}^{\beta}|h_G|^2\,dV.
\end{equation}
Note $D, D_G \subseteq \Omega$ are bounded domains. By uniqueness of compactly supported moment measures,
\begin{equation}\label{weighted measure identity}
\chi_D\,dV
=
|h_G|^2\chi_{D_G}\,dV.
\end{equation}
It follows immediately that $m(D\setminus D_G)=0$. Moreover,
\eqref{weighted measure identity} implies that $h_G=0$ almost everywhere
on $D_G\setminus D$. Since $h_G\not\equiv0$, its zero set has Lebesgue
measure zero, and hence $m(D_G\setminus D)=0$. Thus
\begin{equation}\label{DG symmetric difference}
m(D\triangle D_G)=0.
\end{equation}
Here $D\triangle D_G$ denotes their symmetric difference.
In particular, $D\cap D_G\neq\emptyset$. On this intersection,
\eqref{weighted measure identity} gives $|h_G|=1$ almost everywhere,
and hence everywhere by continuity. Since $D\cap D_G$ is a nonempty
open set, the open mapping theorem and
the identity theorem then give
\begin{equation}\label{hG unimodular constant}
h_G\equiv e^{i\theta_G}\quad\hbox{on }D_G
\end{equation}
for some $\theta_G\in\mathbb R$.
Applying Proposition~\ref{f over varphi is semi-single-valued} to
\eqref{Equation 14} with $f=1$, we see that
$\left|\frac{1}{\widehat\varphi_G}\right|^2$ admits a single-valued
real-analytic extension to $\Omega$. Denote this extension by $u_G$. 
By \eqref{hG definition local},
\eqref{hG unimodular constant} and the definition of $u$
$$
u_G
=
\left|
\left(\frac{1}{\varphi}\right)_G
\right|^2
=u
\quad\hbox{on }O_G.
$$
Thus, by real-analytic continuation,
\begin{equation}\label{uG equals u}
u_G=u\quad\hbox{on }\Omega.
\end{equation}
Furthermore, \eqref{Equation 14} gives
$u_GK_{D_G}=K_\Omega^\lambda$ on $O_G$, and hence, again by
real-analytic continuation,
\begin{equation}\label{u times KDG}
u_GK_{D_G}=uK_{D_G}=K_\Omega^\lambda
\quad\hbox{on }D_G.
\end{equation}
In particular, $u>0$ throughout $D_G$.

\medskip
\noindent{\bf Step 2. Construction of $\widehat D$.}
Set
$$
\widehat D
=
\bigcup_{G\in\operatorname{Aut}(\Omega)}D_G.
$$
Clearly $\widehat D$ is open and contains $D$. By
\eqref{DG symmetric difference}, every $D_G$ intersects $D$, so
$\widehat D$ is connected and hence is a domain. Moreover, for every
$F\in\operatorname{Aut}(\Omega)$,
$$
F^{-1}(\widehat D)
=
\bigcup_{G\in\operatorname{Aut}(\Omega)}(GF)^{-1}(D)
=
\widehat D,
$$
so $\widehat D$ is $\operatorname{Aut}(\Omega)$-invariant.
Since $\widehat D$ is second countable, the open cover
$\{D_G:G\in\operatorname{Aut}(\Omega)\}$ admits a countable subcover,
say $\widehat D=\bigcup_{j=1}^\infty D_{G_j}$. Hence, using \eqref{DG symmetric difference},
$$
m(\widehat D\setminus D)
\leq
\sum_{j=1}^\infty m(D_{G_j}\setminus D)=0.
$$
We next show that $\widehat D\setminus D$ is Bergman-negligible in
$\widehat D$. By \eqref{u times KDG}, $u>0$ on every $D_G$, and hence
on $\widehat D$. For $z\in\widehat D$, let
$(H_{1,z},H_{2,z},\ldots)$ be any local branch of the
semi-single-valued extension of $H$ near $z$. Since $\phi_1$ is a
nonzero constant, by the definitions of $H$ and $u,$
$$
|H_{1,z}|^2=|\phi_1|^2u>0.
$$
We may therefore define locally, for each $k\geq1$,
\begin{equation}\label{extension of phi k to hatD}
\widehat\phi_k
:=
\phi_1\frac{H_{k,z}}{H_{1,z}}.
\end{equation}
This is independent of the chosen branch, since any two branches of
$H$ differ by multiplication by the same unimodular constant. Hence
$\widehat\phi_k$ is a globally defined holomorphic function on
$\widehat D$. On $O$ it agrees with $\phi_k$, and therefore
$\widehat\phi_k=\phi_k$ on $D$ by the identity theorem.

Since $m(\widehat D\setminus D)=0$, the family
$\{\widehat\phi_k\}_{k=1}^\infty$ is orthonormal in
$A^2(\widehat D)$. It is also complete. Indeed, if
$f\in A^2(\widehat D)$ is orthogonal to every $\widehat\phi_k$, then
$f|_D$ is orthogonal to every $\phi_k$, so $f=0$ on $D$ and hence on
$\widehat D$. Thus $\{\widehat\phi_k\}_{k=1}^{\infty}$ is an orthonormal basis of
$A^2(\widehat D)$. The restriction map
$A^2(\widehat D)\to A^2(D)$ therefore sends an orthonormal basis onto
an orthonormal basis and is a unitary isomorphism. Consequently,
$\widehat D\setminus D$ is Bergman-negligible in $\widehat D$, and
$K_{\widehat D}=K_D$ on $D$. Since
\eqref{The key identity between the Bergman metrics} holds on $D$,
real-analytic continuation on the connected domain $\widehat D$ gives \eqref{The key identity between the Bergman metrics is also satisfied for hatD}.

\medskip
\noindent{\bf Step 3. An $\operatorname{Aut}(\Omega)$-invariant function.}
By \eqref{deffinition of hat phi sub G} and the definitions of $u$ and $u_G$, on $O_G$ we have
$$
u_G(z)
=
u(G(z))|\det JG(z)|^{-2(1-\lambda)}.
$$
Using \eqref{uG equals u}, this becomes
$$
u(z)
=
u(G(z))|\det JG(z)|^{-2(1-\lambda)}
\quad\hbox{on }O_G.
$$
Together with
$K_\Omega(G(z),G(z))
=|\det JG(z)|^{-2}K_\Omega(z,z)$, we obtain
$$
K_\Omega^{1-\lambda}(G(z),G(z))u(G(z))
=
K_\Omega^{1-\lambda}(z,z)u(z)
\quad\hbox{on }O_G.
$$
Both sides are real analytic on $\Omega$, so
\begin{equation}\label{orbit invariant function}
K_\Omega^{1-\lambda}(G(z),G(z))u(G(z))
=
K_\Omega^{1-\lambda}(z,z)u(z),
\qquad z\in\Omega,
\end{equation}
for every $G\in\operatorname{Aut}(\Omega)$. Thus
$K_\Omega^{1-\lambda}u$ is constant on every orbit $\Gamma_p$.

\medskip
\noindent{\bf Step 4. Proof of \textup{(1)} and \textup{(2)}.}
We first prove \textup{(1)}. Let $X \subseteq \Omega$ be the analytically
Zariski dense subset as in (1),  and set
$$
\mathcal Z:=\{z\in\Omega:u(z)=0\}.
$$
Locally, $u$ is the squared modulus of a holomorphic branch of
$\frac{1}{\varphi}$, so $\mathcal Z$ is a complex-analytic subset of
$\Omega$. It is proper since $u>0$ on $D$. As $X$ is analytically
Zariski dense, there exists $p_0\in X\setminus\mathcal Z$. By the conclusion in Step 3, there exists some constant $c_0$, such that $K_\Omega^{1-\lambda}u \equiv c_0$ on $\Gamma_{p_0}.$ We must have $c_0>0$, because $u(p_0)>0$.
By
assumption, $\Gamma_{p_0}$ contains the image of a nonconstant
holomorphic map $\eta\colon\mathbb D\to\Omega$.
This yields,
$$
K_\Omega^{1-\lambda}(\eta(\xi),\eta(\xi))
u(\eta(\xi))
\equiv c_0.
$$
Hence $u>0$ along the image of
$\eta$. Wherever $u>0$, locally $u=|g|^2$ for a nowhere-vanishing
holomorphic function $g$, so $\log u$ is pluriharmonic. Taking the logarithm of the preceding equation and then applying
$i\partial\overline{\partial}$, we obtain
$$
(1-\lambda)\eta^*\omega_\Omega=0.
$$
Since $\eta$ is nonconstant and $\omega_\Omega$ is positive definite,
$\eta^*\omega_\Omega$ is nonzero at some point. Therefore $\lambda=1$.

Finally, let $q\in D$ be as in \textup{(2)}. By
\eqref{u times KD equals KOmega lambda}, $u(q)>0$. Since $\Gamma_q$
contains a nonconstant holomorphic curve, the same argument as above applied to
$\Gamma_q$ gives $\lambda=1$. This completes the proof.
\end{proof}

\subsection{Rigidity when $\lambda=1$}

To conclude this section, we prove a rigidity result when the normalizing constant satisfies $\lambda=1$.

\begin{pro}\label{Theorem I'}
Let $D\subseteq\Omega$ be bounded domains in $\mathbb C^n$ such that
\begin{equation}\label{i ddbar log kD equals lambda i ddbar log kOmega - this has appeared before}
i\partial\overline{\partial}\log K_D
= i\partial\overline{\partial}\log K_\Omega
\quad\hbox{on }D.
\end{equation}
Then it holds that
\begin{equation}\label{First appearance of the functional equation of the Bergman kernel for lambda equal one}
K_D(z,z)
=K_\Omega(z,z)
\quad\hbox{on }D.
\end{equation}
Moreover, $\Omega\setminus D$ is
Bergman-negligible in $\Omega$.
\end{pro}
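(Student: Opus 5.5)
Take $\lambda=1$ throughout and use Proposition~\ref{preliminary}. It gives, near a point $p\in D$, a ball $O$ and a nowhere-vanishing $\varphi$ such that $K_D=|\varphi|^2K_\Omega$ on $O$. Moreover, $1/\varphi$ has a semi-single-valued extension to $\Omega$, and $u=|1/\varphi|^2$ is single-valued real-analytic on $\Omega$ with $uK_D=K_\Omega$ on $D$.

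The plan is to show $u\equiv1$, i.e.\ that $\varphi$ is a unimodular constant. I would rerun the moment-problem argument of Step~1 of Proposition~\ref{The lemma that says in the BH case that lambda equals 1}, with the automorphism $G$ replaced by the identity and the pair $(D,D_G)$ replaced by $(D,\Omega)$.

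\textbf{Step 1: the unitary $\mathcal U$.} Take an orthonormal basis $\{\phi_k\}$ of $A^2(D)$ with $\phi_1$ constant, and an orthonormal basis $\{\psi_k\}$ of $A^2(\Omega)$. On $O$ consider
\[
H=(\phi_k/\varphi)_k,\qquad B=(\psi_k)_k .
\]
Both satisfy $\sum|\cdot|^2=K_\Omega$ on $O$. Both are linearly full: for $B$ this follows from completeness of $\{\psi_k\}$; for $H$, a relation $\sum v_k\phi_k/\varphi=0$ on $O$ forces $\sum v_k\phi_k=0$. Calabi's rigidity theorem (Theorem~\ref{thm:Rigidity}), together with the norm comparison, gives a unitary $\mathcal V$ with $B\mathcal V=H$ on $O$. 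Since $\lambda=1$ there is no $(\det JG)^{1-\lambda}$ factor, so this yields a unitary
\[
\mathcal U\colon A^2(D)\to A^2(\Omega),\qquad \mathcal U(f)=f/\varphi \ \text{ on } O .
\]
As in \eqref{UG analytic continuation}, the right-hand side $f/\varphi$ continues analytically through the convergent series $\sum c_k(\text{branch of }H)_k$. Because $\mathcal U(f)$ is a genuine single-valued function on $\Omega$, $1/\varphi$ extends single-valuedly: it is $h:=\mathcal U(1)\in A^2(\Omega)$. Then
\[
\mathcal U(z^\alpha)=z^\alpha h \quad\text{on }\Omega .
\]

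\textbf{Step 2: the moment problem.} Unitarity of $\mathcal U$ gives
\[
\int_D z^\alpha\bar z^\beta\,dV=\int_\Omega z^\alpha\bar z^\beta|h|^2\,dV \quad\text{for all } \alpha,\beta .
\]
Uniqueness of compactly supported moment measures then gives $\chi_D\,dV=|h|^2\chi_\Omega\,dV$. Hence $h=0$ a.e.\ on $\Omega\setminus D$, and since $h\not\equiv0$, we get $m(\Omega\setminus D)=0$. Also $|h|=1$ on $D$, so $h$ is a unimodular constant by the open mapping theorem. Therefore $u=|h|^2\equiv1$, and $uK_D=K_\Omega$ on $D$ gives $K_D(z,z)=K_\Omega(z,z)$ on $D$.

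\textbf{Step 3: Bergman-negligibility.} With $h$ constant, $\mathcal U(f)=\bar c f$ on $O$ for a fixed unimodular $c$. So every $f\in A^2(D)$ extends holomorphically to $\Omega$, namely by $c\,\mathcal U(f)$. $\Omega\setminus D$ has measure zero by Step~2, and it is closed in $\Omega$ since $D$ is open. Hence $\Omega\setminus D$ is Bergman-negligible.

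The main obstacle I expect is Step~1: checking that $\mathcal U$ maps into $A^2(\Omega)$ and identifying it with analytic continuation of $f/\varphi$ globally, including passing from the local identity on $O$ to all of $\Omega$ in the monomial relation. I would handle this exactly as in Proposition~\ref{The lemma that says in the BH case that lambda equals 1}, via locally uniform convergence along continuation paths, with the identity theorem applied on the connected set $\Omega$.
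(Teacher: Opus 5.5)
Your proof is correct, but it takes a genuinely different route from the paper's in the key step.

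\textbf{The paper's route.} It first proves Lemma~\ref{lemma varphi}. There $\varphi$ is globalized to a single-valued, nowhere-vanishing function on $D$, using the meromorphic quotient $K_D(z,p)/(\overline{\varphi(p)}K_\Omega(z,p))$ and the Riemann extension theorem. This yields the polarized identity $K_D(z,w)=\varphi(z)\overline{\varphi(w)}K_\Omega(z,w)$ on $D\times D$. The paper then builds a unitary $T\colon A^2(\Omega)\to A^2(D)$ from the reproducing property alone, by setting $T(K^\Omega_w)=K^D_w/\overline{\varphi(w)}$. This gives $T(f)=\varphi f|_D$, and the moment identity becomes $\chi_\Omega\,dV=|\varphi|^2\chi_D\,dV$, which gives $m(\Omega\setminus D)=0$ directly.

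\textbf{Your route.} You construct the inverse operator $\mathcal U=T^{-1}$, $f\mapsto f/\varphi$, by Calabi rigidity on $O$. This is exactly Step~1 of Proposition~\ref{The lemma that says in the BH case that lambda equals 1} with $G=\mathrm{id}$ and $D_G$ replaced by $\Omega$.

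\textbf{What each buys.}
\begin{itemize}
\item Your approach gives the single-valued extension of $1/\varphi$ to all of $\Omega$ for free, as $h=\mathcal U(1)\in A^2(\Omega)$. You therefore need neither Lemma~\ref{lemma varphi} nor the semi-single-valued extension theory; only the local $\varphi$ on $O$ is used. It also treats the case $\lambda=1$ by the same mechanism as the preceding proposition.
\item The price is reliance on Calabi's rigidity theorem in $\mathbb P^\infty$. You also need the extra observation $h\not\equiv0$ to conclude $m(\Omega\setminus D)=0$.
\item The paper's argument, once the Lemma is in hand, is elementary reproducing-kernel Hilbert space theory.
\end{itemize}

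\textbf{Two small remarks.}
\begin{itemize}
\item The obstacle you anticipate in Step~1 is not there. $\mathcal U(f)$ lies in $A^2(\Omega)$ by construction, since $\mathcal U$ sends an orthonormal basis to an orthonormal basis. The identity $\mathcal U(z^\alpha)=z^\alpha h$ on $\Omega$ follows from the identity theorem, because both sides are holomorphic on $\Omega$ and agree on $O$; no continuation along paths is needed.
\item Linear fullness of $[B]$ follows from orthonormality of $\{\psi_k\}$ and the identity theorem, not from completeness. A relation $\sum_k v_k\psi_k\equiv0$ on $O$ with $(v_k)\in\ell^2$ defines an element of $A^2(\Omega)$ of norm $\|(v_k)\|_{\ell^2}$ that vanishes on $O$, hence on $\Omega$. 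This is the same argument you give for $H$.
\end{itemize}
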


\begin{proof}
Fix $p\in D$. 
By Proposition \ref{preliminary}, we can choose a small simply connected
neighborhood $O\subseteq D$ of $p$ and a nowhere-vanishing
holomorphic function $\varphi$ on $O$ such that
\begin{equation}\label{local functional equation for Theorem I prime}
K_D=|\varphi|^2K_\Omega
\quad\hbox{on }O.
\end{equation}
Moreover, 
$\varphi$ admits a semi-single-valued holomorphic
extension to $D$. 
We now prove a stronger statement.
\begin{lem}\label{lemma varphi}
The function $\varphi$ extends to a nowhere-vanishing single-valued
holomorphic function on $D$, which we continue to denote by $\varphi$.  The extension satisfies
\begin{equation}\label{Equation *}
K_D(z,w)
=
\varphi(z)\overline{\varphi(w)}K_\Omega(z,w),
\qquad (z,w)\in D\times D.
\end{equation}
\end{lem}

\begin{proof}[Proof of Lemma \ref{lemma varphi}]
Polarizing
\eqref{local functional equation for Theorem I prime}, we obtain
\begin{equation}\label{local polarized kernel identity lambda one}
K_D(z,w)
=
\varphi(z)\overline{\varphi(w)}K_\Omega(z,w),
\qquad z,w\in O.
\end{equation}
Since $K_\Omega(p,p)>0$, after shrinking $O$ if necessary we may assume
that $K_\Omega(z,p)\neq0$ for every $z\in O$. Setting $w=p$ in
\eqref{local polarized kernel identity lambda one} gives
$$
\varphi(z)
=
\frac{K_D(z,p)}
{\overline{\varphi(p)}K_\Omega(z,p)}
\quad\hbox{on }O.
$$
Consequently,
\begin{equation}\label{meromorphic extension of varphi}
\Phi(z)
:=
\frac{K_D(z,p)}
{\overline{\varphi(p)}K_\Omega(z,p)}
\end{equation}
defines a single-valued meromorphic continuation of $\varphi$ to $D$.
Let $P\subseteq D$ denote the polar set of $\Phi$ in $D$. 
Since $\Phi=\varphi$ on $O$,  \eqref{local functional equation for Theorem I prime} gives
$$
|\Phi|^2=\frac{K_D}{K_\Omega}
\quad\hbox{on }O.
$$
Both sides are real analytic on $D\setminus P$. Since $P$ is a proper
complex-analytic subset of $D$, the set $D\setminus P$ is connected.
Hence, by the identity principle for real-analytic functions,
\begin{equation}\label{modulus of Phi}
|\Phi(z)|^2
=
\frac{K_D(z,z)}{K_\Omega(z,z)}
\quad\hbox{on }D\setminus P.
\end{equation}
The right-hand side is positive and real analytic on $D$, and hence
locally bounded. Therefore \eqref{modulus of Phi} shows that $\Phi$ is
locally bounded on $D\setminus P$ near every point of $P$. By the
Riemann extension theorem, $\Phi$ extends holomorphically across $P$. Equation~\eqref{modulus of Phi} also shows that the resulting
holomorphic function has no zeros. Thus $\varphi$ extends to a
nowhere-vanishing single-valued holomorphic function on $D$, which we
continue to denote by $\varphi$. Finally, by the identity principle again, 
\eqref{local polarized kernel identity lambda one} yields \eqref{Equation *}.
\renewcommand{\qedsymbol}{$\blacksquare$}
    \end{proof}
    \renewcommand{\qedsymbol}{$\square$}

We next prove $\varphi$ is necessarily a unimodular constant. For that, we first use a similar argument as in
\cite[Proposition~4.1]{BhGaNaXi26} to compare the Bergman spaces of
$\Omega$ and $D$. For $w\in D$, write
$$
K_w^\Omega(z):=K_\Omega(z,w),
\qquad
K_w^D(z):=K_D(z,w).
$$
Let
$$
\mathcal E_\Omega
:=
\operatorname{Span}\{K_w^\Omega:w\in D\}
\subseteq A^2(\Omega),
$$
where the span consists of finite linear combinations. Define
$T\colon\mathcal E_\Omega\to A^2(D)$ by
\begin{equation}\label{definition of T between Bergman spaces}
T(K_w^\Omega)
:=
\frac{1}{\overline{\varphi(w)}}K_w^D,
\qquad w\in D,
\end{equation}
and extend $T$ linearly.
We verify that $T$ is well defined and isometric. Indeed, for arbitrary
$w_1,\ldots,w_m\in D$ and $c_1,\ldots,c_m\in\mathbb C$,
\eqref{Equation *} and the reproducing property give
\begin{align*}
\left\|
\sum_{j=1}^m
c_j\frac{K_{w_j}^D}{\overline{\varphi(w_j)}}
\right\|_{A^2(D)}^2
&=
\sum_{j,k=1}^m
c_j\overline{c_k}
\frac{K_D(w_k,w_j)}
{\overline{\varphi(w_j)}\varphi(w_k)}\\
&=
\sum_{j,k=1}^m
c_j\overline{c_k}K_\Omega(w_k,w_j)\\
&=
\left\|
\sum_{j=1}^m c_jK_{w_j}^\Omega
\right\|_{A^2(\Omega)}^2.
\end{align*}
Thus $T$ is well defined and isometric.

Moreover, $\mathcal E_\Omega$ is dense in $A^2(\Omega)$. Indeed, if
$f\in A^2(\Omega)$ is orthogonal to every $K_w^\Omega$, $w\in D$, then
$f(w)=0$ for every $w\in D$, and hence $f\equiv0$ on $\Omega$ by the
identity theorem. Similarly, the image of $T$ contains
$$
\left\{
\frac{1}{\overline{\varphi(w)}}K_w^D:w\in D
\right\},
$$
whose linear span is dense in $A^2(D)$. 
Therefore $T$ extends uniquely to an isometric map
$$
T\colon A^2(\Omega)\longrightarrow A^2(D).
$$
Its range is closed, since $T$ is isometric; it is also dense in $A^2(D)$ by
the preceding observation. Hence $T$ is surjective, and therefore an
isometric isomorphism from $A^2(\Omega)$ to $A^2(D)$.

By \eqref{Equation *}, for $z,w\in D$,
$$
\frac{1}{\overline{\varphi(w)}}K_w^D(z)
=
\varphi(z)K_w^\Omega(z).
$$
Hence
\begin{equation}\label{unitary T}
T(f)=\varphi f|_D
\quad\hbox{on }D
\end{equation}
for every $f\in\mathcal E_\Omega$. 
Since $T$ is unitary and convergence in a Bergman space
implies locally uniform convergence, the same identity holds for every
$f\in A^2(\Omega)$. Consequently, by unitarity of $T$ again,
\begin{equation}\label{unitary multiplication identity}
\langle f,g\rangle_{A^2(\Omega)}
=
\langle \varphi f,\varphi g\rangle_{A^2(D)}
\qquad
\hbox{for all }f,g\in A^2(\Omega).
\end{equation}
We now apply a complex moment argument. Since $\Omega$ is bounded,
every monomial belongs to $A^2(\Omega)$. Taking
$f=z^\alpha$ and $g=z^\beta$ in
\eqref{unitary multiplication identity}, we obtain
$$
\int_\Omega z^\alpha\bar{z}^{\beta}\,dV
=
\int_D
z^\alpha\bar{z}^{\beta}|\varphi|^2\,dV
$$
for all multi-indices $\alpha,\beta\in\mathbb N^n$. Thus the compactly
supported measures
$
\chi_\Omega\,dV$
and $|\varphi|^2\chi_D\,dV
$
have the same complex moments. By uniqueness of the compactly supported
moment measures,
\begin{equation}\label{measure equality for D and Omega}
\chi_\Omega\,dV
=
|\varphi|^2\chi_D\,dV.
\end{equation}
Since $D\subseteq\Omega$, it follows that
$$
m(\Omega\setminus D)=0
$$
and $|\varphi|=1$ almost everywhere on $D$. By continuity,
$|\varphi|\equiv1$ on $D$, and hence, by the open mapping theorem,
\begin{equation}\label{varphi is unimodular constant}
\varphi\equiv c
\quad\hbox{on }D
\end{equation}
for some constant $c$ with $|c|=1$. Then the desired equation \eqref{First appearance of the functional equation of the Bergman kernel for lambda equal one} follows from \eqref{Equation *}.
Finally, by \eqref{unitary T} and
\eqref{varphi is unimodular constant}, the unitary operator $T$ is simply the restriction map, up to a unimodular constant:
$$
T(f)=c\,f|_D.
$$
Therefore the restriction map
$$
R\colon A^2(\Omega)\longrightarrow A^2(D),
\qquad
R(f)=f|_D,
$$
is a unitary isomorphism. In particular, every function in $A^2(D)$
extends holomorphically to a function in $A^2(\Omega)$. Together with
$m(\Omega\setminus D)=0$, this shows that $\Omega\setminus D$ is
Bergman-negligible in $\Omega$. The proof is complete.
\end{proof}

\section{Proofs of Theorem~\ref{Main result - bounded homogoeneous case},
Theorem~\ref{Lambda = 1 Theorem}, and the corollaries}
\label{Subsection where Theorem 5 is proved}

We first prove the following more general version of
Theorem~\ref{Main result - bounded homogoeneous case},
namely, Theorem~\ref{Main theorem bounded homogeneous case - but with M a complex manifold that separates points}.
Theorem~\ref{Main result - bounded homogoeneous case} then follows as a
special case.

\begin{theorem}\label{Main theorem bounded homogeneous case - but with M a complex manifold that separates points}
Let $M$ be an $n$-dimensional Bergman-nondegenerate complex manifold, $n \geq 2$, with its Bergman metric given by $\omega_M$ and let $\Omega \subseteq \mathbb{C}^n$ be a bounded domain with a complete Bergman metric $\omega_{\Omega}$. Suppose there exists an
analytically Zariski dense subset $X\subseteq\Omega$ such that, for every
$p\in X$, the $\operatorname{Aut}(\Omega)$-orbit through $p$ contains a nonconstant
holomorphic curve.  Assume further that there exist an open connected set
$U\subseteq M$ and a holomorphic map $f:U\to\Omega$ such that
\begin{equation}\label{Our constant that will be replaceed by a conformal factor once again 23}
\omega_M=\Lambda f^*(\omega_\Omega),
\end{equation}
where $\Lambda$ is a positive function on $U$.  Then $f$ extends to a biholomorphism $F\colon M\to D$ onto a subdomain
$D\subseteq\Omega$ such that $\Omega\setminus D$ is a
Bergman-negligible subset of $\Omega$.
\end{theorem}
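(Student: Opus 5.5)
By Proposition~\ref{Conformal implies constant proposition in dimension greater than one}, since $n\geq 2$ the factor $\Lambda$ is a positive constant $\lambda$. So we have $\omega_M=\lambda f^*\omega_\Omega$ on $U$, and the task is to globalize $f$, show $\lambda=1$, and control the complement of the image.

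The first step globalizes $f$ following the proof of Theorem~\ref{The main Stein result}. Lemma~\ref{Lemma prior to the main Stein theorem} gives a holomorphic isometric embedding $[H]\colon(\Omega,\lambda\omega_\Omega)\to(\mathbb P^{N-1},\omega_{FS})$. Calabi's rigidity theorem then gives a unitary $L$ with $L\circ\beta_M=[H]\circ f$ on $U$. Since $\lambda\omega_\Omega$ is complete, \cite[Chapter VI, Theorem 6.3]{Ko63} continues $f$ along every curve in $M$ as a local isometry into $\Omega$. Injectivity of $[H]$ makes this continuation single-valued, giving a local biholomorphism $F\colon M\to\Omega$. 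In the Stein proof, injectivity of $F$ came from Huang--Li's point-separation result. Here it comes directly from Bergman-nondegeneracy: $\beta_M$ is injective, hence so is $F$. Thus $F\colon M\to D:=F(M)$ is a biholomorphism onto a bounded domain $D\subseteq\Omega$. By invariance of the Bergman metric, $\omega_D=(F^{-1})^*\omega_M=\lambda\omega_\Omega$ on $D$, that is,
\[
i\partial\overline{\partial}\log K_D=\lambda\, i\partial\overline{\partial}\log K_\Omega \quad\hbox{on } D .
\]

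The second step is the normalization $\lambda=1$, which is the step that replaces the pseudoconvexity argument of Lemma~\ref{Proposition that shows that Omega minus D is a closed pluripolar subset}. Here $D\subseteq\Omega$ are bounded domains and $\Omega$ carries a Zariski dense set $X$ whose automorphism orbits contain nonconstant holomorphic curves. Part~(1) of Proposition~\ref{The lemma that says in the BH case that lambda equals 1} applies verbatim and yields $\lambda=1$.

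With $\lambda=1$, Proposition~\ref{Theorem I'} applies to $D\subseteq\Omega$. It gives $K_D=K_\Omega$ on $D$ and shows that $\Omega\setminus D$ is Bergman-negligible in $\Omega$, which completes the proof.

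The genuine difficulty, ruling out $\lambda\neq1$ without a Stein hypothesis, is already absorbed in Proposition~\ref{The lemma that says in the BH case that lambda equals 1}. So the main point needing care here is the first step. Lemma~\ref{Lemma prior to the main Stein theorem} was stated for an arbitrary $M$ with a Bergman metric, so it is available. We should also check that $N\geq 2$, which holds because $A^2_{(n,0)}(M)$ separates holomorphic directions. Finally, the Kobayashi continuation requires only completeness of the target metric, not any property of $M$, so it applies here without change.
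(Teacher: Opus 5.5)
Your proposal is correct and follows the paper's own proof essentially step for step. It reduces $\Lambda$ to a constant, globalizes $f$ using the embedding $[H]$, Calabi rigidity and Kobayashi's continuation theorem, and obtains injectivity of $F$ from the injectivity of $\beta_M$; it then applies part~(1) of the orbit proposition to get $\lambda=1$, and the $\lambda=1$ rigidity proposition to conclude that $\Omega\setminus D$ is Bergman-negligible.
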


\begin{proof}
By Proposition~\ref{Conformal implies constant proposition in dimension greater than one},
$\Lambda$ is a positive constant $\lambda$ on $U$. Set
$\omega_\lambda:=\lambda\omega_\Omega$. Then by \eqref{Our constant that will be replaceed by a conformal factor once again 23},
$f^*\omega_\lambda=\omega_M$.
Let
$$
[H]\colon(\Omega,\omega_\lambda)
\longrightarrow(\mathbb P^{N-1},\omega_{FS})
$$
be the holomorphic isometric embedding given by
Lemma~\ref{Lemma prior to the main Stein theorem}, and let
$\beta_M\colon M\to\mathbb P^{N-1}$ be the Bergman--Bochner map of $M$,
where $N=\dim A^2_{(n,0)}(M)$. Since $M$ is Bergman-nondegenerate,  $N \geq 2$ and $\beta_M$ is injective.

As in the proof of Theorem~\ref{The main Stein result}, the completeness
of $\omega_\lambda$ implies that $f$ admits an analytic continuation along
every curve in $M$ to a (possibly multi-valued) local holomorphic isometry
with values in $\Omega$. By Calabi's rigidity theorem (Theorem~\ref{thm:Rigidity}), there exists a
unitary isometry $L$ between the corresponding closed projective
subspaces such that $L\circ\beta_M=[H]\circ f$ on $U$. Analytically
continuing this identity along curves in $M$, every local branch $F$
satisfies $L\circ\beta_M=[H]\circ F$. Since the left-hand side is
globally defined and $[H]$ is injective, all local branches agree.
Hence $f$ extends to a
single-valued global local holomorphic isometry $F\colon M\to\Omega$
satisfying
\begin{equation}\label{Equation for bounded homogeneous proof}
L\circ\beta_M=[H]\circ F
\quad\hbox{on }M.
\end{equation}

We claim $F$ is injective. Indeed, if
$F(p)=F(q)$, then \eqref{Equation for bounded homogeneous proof} gives
$$
L\circ\beta_M(p)=L\circ\beta_M(q).
$$
Since both $L$ and $\beta_M$ are injective, $p=q$. Hence $F$ is
injective. As $F$ is a local biholomorphism, it is therefore a
biholomorphism onto its image
$
D:=F(M)\subseteq\Omega.
$

By the biholomorphic invariance of the Bergman metric,
$F^*\omega_D=\omega_M$, while the construction of $F$ gives
$F^*(\lambda\omega_\Omega)=\omega_M$. Since $F\colon M\to D$ is
biholomorphic, it follows that
$\omega_D=\lambda\omega_\Omega$
on $D,$
or equivalently,
\begin{equation}\label{The functional Bergman equation yet againa}
i\partial\overline{\partial}\log K_D
=
\lambda i\partial\overline{\partial}\log K_\Omega
\quad\hbox{on }D.
\end{equation}
By the orbit assumption and
part~\textup{(1)} of
Proposition~\ref{The lemma that says in the BH case that lambda equals 1},
we have $\lambda=1$. Proposition~\ref{Theorem I'} then applies to
\eqref{The functional Bergman equation yet againa} and shows that
$\Omega\setminus D$ is Bergman-negligible in $\Omega$. This completes
the proof.
\end{proof}


\begin{proof}[Proof of Theorem~\ref{Main result - bounded homogoeneous case}]
Since every bounded domain in $\mathbb C^n$ is Bergman-nondegenerate,
Theorem~\ref{Main result - bounded homogoeneous case} follows immediately
from Theorem~\ref{Main theorem bounded homogeneous case - but with M a complex manifold that separates points}.
\end{proof}


We next prove
Corollary~\ref{The new main theorem 2s corollary is the bounded homogeneous domain}
and
Corollary~\ref{Corollary 8 - locally symmetric Bergman metrics and the image is a domain minus a Bergman negligible subset},
and show that both remain valid under the weaker assumption that $M$ is
Bergman-nondegenerate.

\begin{proof}[Proof of Corollary~\ref{The new main theorem 2s corollary is the bounded homogeneous domain}]
Suppose first that $\Omega$ is a bounded homogeneous domain. Then its
Bergman metric is complete, and the $\operatorname{Aut}(\Omega)$-orbit
through every point is all of $\Omega$. In particular, every such orbit
contains a nonconstant holomorphic curve. Hence
Theorem~\ref{Main theorem bounded homogeneous case - but with M a complex manifold that separates points}
applies with $X=\Omega$.

Now suppose that $\Omega=\Omega_1\times\Omega_2$, where $\Omega_1$ is a
bounded homogeneous domain and $\Omega_2$ has a complete Bergman metric.
Since $\omega_\Omega=\omega_{\Omega_1} \oplus \omega_{\Omega_2}$, the Bergman
metric of $\Omega$ is the product metric and is complete. Moreover, for
every $(p_1,p_2)\in\Omega$, the automorphisms of the form
$\tau\times\operatorname{id}_{\Omega_2}$, with
$\tau\in\operatorname{Aut}(\Omega_1)$, show that the
$\operatorname{Aut}(\Omega)$-orbit through $(p_1,p_2)$ contains
$\Omega_1\times\{p_2\}$, and hence contains a nonconstant holomorphic
curve. Thus the same theorem again applies with $X=\Omega$.

Therefore
Corollary~\ref{The new main theorem 2s corollary is the bounded homogeneous domain}
follows. The same argument also shows that the corollary remains valid
under the weaker assumption that $M$ is a Bergman-nondegenerate complex
manifold.
\end{proof}

We now turn to Corollary \ref{Corollary 8 - locally symmetric Bergman metrics and the image is a domain minus a Bergman negligible subset}. 
Recall that a Riemannian manifold $M$ is locally symmetric if, for every
$p\in M$, there exist a neighborhood $U$ of $p$ and a local isometry
$s\colon U\to M$ such that $s(p)=p$ and $(ds)_p=-\operatorname{id}$.
Equivalently, if $\nabla$ denotes the Levi--Civita connection and $R$
the Riemannian curvature tensor, then $M$ is locally symmetric if and
only if $\nabla R=0$ (see \cite[Chapter IV]{He78}).
A bounded domain $\Omega\subseteq\mathbb C^n$ is called a bounded
symmetric domain if, for every $p\in\Omega$, there exists an involutive
holomorphic automorphism of $\Omega$ for which $p$ is an isolated fixed
point.

In \cite[Theorem~1.2 and Remark~2.4]{LoPa26}, Loi--Palmieri proved that
if $M$ is an $n$-dimensional Stein manifold that is Bergman-nondegenerate
and locally symmetric with respect to its Bergman metric, then $M$ is
biholomorphic to a bounded symmetric domain in $\mathbb C^n$ with
possibly a closed pluripolar set removed.
Loi--Palmieri \cite[Conjecture~1]{LoPa26} then asked whether, without the
Stein assumption, $M$ is biholomorphic to a bounded symmetric domain
with a Bergman-negligible set removed. This question was also raised
independently by Zimmer \cite{Zi25} to the second author of this paper. Corollary \ref{Corollary 8 - locally symmetric Bergman metrics and the image is a domain minus a Bergman negligible subset} provides an affirmative answer to their question.


\begin{proof}[Proof of Corollary \ref{Corollary 8 - locally symmetric Bergman metrics and the image is a domain minus a Bergman negligible subset}]
As noted at the beginning of the proof of \cite[Theorem 1.2]{LoPa26}, by \cite[Chapter IV, Theorem 5.1]{He78} and \cite[Chapter VIII, Theorem 7.1]{He78},  the local symmetry assumption implies that there exists a domain $U \subseteq M$, a bounded symmetric domain $\Omega \subseteq \mathbb{C}^n,$ and a holomorphic map $f: U \to \Omega$ such that $\omega_M=\lambda f^*(\omega_{\Omega})$. Here $\omega_{\Omega}$ and $\omega_M$ denote the Bergman metrics and $\lambda $ is a positive constant.  Since every bounded symmetric domain is homogeneous, the
conclusion follows from
Corollary~\ref{The new main theorem 2s corollary is the bounded homogeneous domain}. Since the latter remains valid under the weaker assumption that $M$ is a
Bergman-nondegenerate complex manifold, so does Corollary \ref{Corollary 8 - locally symmetric Bergman metrics and the image is a domain minus a Bergman negligible subset}. 
\end{proof}



In the proof of
Theorem~\ref{Main theorem bounded homogeneous case - but with M a complex manifold that separates points},
a major step is to establish Proposition~\ref{The lemma that says in the BH case that lambda equals 1},
namely, to prove that $\lambda=1$. In
Theorem~\ref{Lambda = 1 Theorem}, however, this is already assumed.

\begin{proof}[Proof of Theorem~\ref{Lambda = 1 Theorem}]
First we consider when $n \geq 2$.  We repeat the argument in the first part of the proof of
Theorem~\ref{Main theorem bounded homogeneous case - but with M a complex manifold that separates points}
to conclude that $f$ extends to a biholomorphism $F$ from $M$ onto its
image $D:=F(M)\subseteq\Omega$, and moreover that
\eqref{The functional Bergman equation yet againa} holds.
By \eqref{Exact condition}, however, the normalizing constant in the
present setting is $\lambda=1$.
Hence
Proposition~\ref{Theorem I'} applies directly and shows that
$\Omega\setminus D$ is Bergman-negligible in $\Omega$.

The same argument also shows that
Theorem~\ref{Lambda = 1 Theorem} remains valid under the weaker
assumption that $M$ is a Bergman-nondegenerate complex manifold.  When $n = 1$, this theorem was already resolved, under the weaker assumptions that $M$ is a Bergman-nondegenerate Riemann surface or a Stein Riemann surface, in Remark \ref{rmk:question21dimension1}.
\end{proof}


\bibliographystyle{amsplain}
\bibliography{bibliography}

\fontsize{11}{9}\selectfont

\vspace{0.5cm}

\noindent pebenfelt@ucsd.edu;

 \vspace{0.2 cm}

\noindent Department of Mathematics, University of California San Diego, La Jolla, CA 92093, USA

\vspace{0.6 cm}

\noindent john.treuer@sjsu.edu;

\vspace{0.2 cm}

\noindent Department of Mathematics, San Jos\'e State University, San Jos\'e, CA 95192, USA

\vspace{.6 cm}

\noindent m3xiao@ucsd.edu;

 \vspace{0.2 cm}

\noindent Department of Mathematics, University of California San Diego, La Jolla, CA 92093, USA

\end{document}